\documentclass[12pt, reqno]{amsart}
\usepackage{mathrsfs}
\usepackage{amsmath}
\usepackage[height=22cm, width=16.5cm, hmarginratio={1:1}]{geometry}
\usepackage{diagbox}
\usepackage{tikz}
\usepackage{tikz-cd}
\usetikzlibrary{positioning, shapes.geometric}
\usepackage{newclude}
\usepackage{appendix}
\usepackage{extarrows}
\usepackage{pifont}
\usepackage{array}
\usepackage[hidelinks]{hyperref}

\author{Shuai Guo}
\address{School of Mathematical Sciences, Peking University,
	No 5. Yiheyuan Road, Beijing, 100871, China}
\email{guoshuai@math.pku.edu.cn}

\author{Qingsheng Zhang}
\address{School of Sciences, Great Bay University,
	Great Bay Institute for Advanced Study,
	Dongguan, 523000, China}
\email{zhangqingsheng@gbu.edu.cn}

\author{Yang Zhou}
\address{Shanghai Center for Mathematical Sciences, Fudan University, 
	Shanghai, 200433, China}
\email{{\nolinkurl{y_zhou@fudan.edu.cn}}}

\newcommand{\lh}{{\rm h}}
\newcommand{\hh}{{\rm H}}

\newcommand{\cX}{{\mathcal X}}
\newcommand{\Frob}{H}
\newcommand{\twqp}{\star}

\newcommand{\<}{\langle}
\renewcommand{\>}{\rangle}
\newcommand{\git}{/\!\!/}

 \def\fanoind{c_X}

\newcommand{\corr}[1]{\langle\!\langle {#1} \rangle\!\rangle}

\def\sspan{{\mathop{\rm span}}}

\def\cD{{\mathcal D}}

\def\cG{{\mathcal G}}

\def\E{{\mathcal E}}

\newcolumntype{C}[1]{>{\centering\arraybackslash$}m{#1}<{$}}
\newlength{\mycolwd}                                         
\newlength{\mycolwdm}                                         
\newlength{\mycolwda}                                         
\newlength{\mycolwdb}                                         
\newlength{\mycolwdc}                                         
\newlength{\mycolwdd}                                         
\newlength{\mycolwddd}                                         
\newlength{\mycolwddc}                                         
\newcommand{\id}{\hspace{0pt}\text{I}}

\usepackage {xcolor,graphicx,psfrag, verbatim,amssymb,amscd,enumerate,subfigure}

\def\End{\mathrm{End}}
\def\bt{\mathbf t}

\newcommand{\tw}{\mathrm{tw}}

\def\PF{\mathcal{PF}}

\def\beq{\begin{equation}}
\def\eeq{\end{equation}}

 \def\pd{\partial}

 \newcommand{\cF}{\mathcal{F}}

 \DeclareMathOperator{\diag}{diag}

\newcommand{\tr}{\mathrm{tr}}
\newcommand{\str}{\mathrm{str}}

\newtheorem{dummy}{}[section]
\newtheorem{lemma}[dummy]{Lemma}
\newtheorem{proposition}[dummy]{Proposition}
\newtheorem{theorem}[dummy]{Theorem}
\newtheorem{corollary}[dummy]{Corollary}

\theoremstyle{definition}

\newtheorem{definition}[dummy]{Definition}
\newtheorem{example}[dummy]{Example}

\newtheorem{remark}[dummy]{Remark}

\newtheorem{thm}{Theorem}

\usepackage{graphicx}

\begin{document}
\title[Wall-crossing formula and Virasoro constraints]
{Wall-crossing formula and genus-one Virasoro conjecture for Fano complete intersections}

\begin{abstract}
  The Virasoro conjecture predicts a set of universal relations among all genera Gromov--Witten invariants of any smooth projective variety.
  The conjecture is well understood for semisimple theories, but remains largely open in the non-semisimple setting.
  
  We prove the genus-one Virasoro conjecture on the ambient state space of smooth Fano complete intersections in projective space. 
  For most of these complete intersections, the big quantum cohomology is nowhere semisimple. 
  We also generalize the wall-crossing formula for quasimap invariants with weighted markings to the equivariant twisted setting, allowing descendant insertions at light markings. 
  Together with genus-one quantum Lefschetz for quasimaps with light markings, this wall-crossing formula provides the key bridge from the Gromov--Witten theory of the complete intersection to the semisimple equivariant twisted theory of the projective space.
\end{abstract}

\maketitle

\setcounter{section}{0}
\setcounter{tocdepth}{2}

\tableofcontents

\section{Introduction}
\subsection{Virasoro conjecture}

The Virasoro conjecture in Gromov--Witten theory, proposed by Eguchi--Hori--Xiong and later refined in Katz's formulation \cite{EHX97,EJX98}, is one of the basic conjectural structures governing the total descendant potential. For a smooth projective variety, it predicts that the total descendant potential $\mathcal D({\bf t};\hbar)$ is annihilated by a sequence of differential operators $\{L_m\}_{m\geq -1}$ which form a representation of the Virasoro Lie algebra in the sense that \[ [L_m,L_n]=(m-n)L_{m+n}, \qquad m,n\geq -1 . \]

More precisely, if $\cF_g({\bf t})$ is the generating series of genus-$g$ GW
invariants, then $\cD({\bf t};\hbar) = \mathrm{exp}(\sum_{g \ge 0}
\hbar^{2g-2} \cF_g({\bf t}))$. And the conjecture says $L_m\cD({\bf t};\hbar)
= 0$ for all $m\geq -1$. The conjecture can be studied genus-by-genus. Indeed,
we always have the genus expansion
\beq\label{def:Lgm-D}\textstyle
(L_m\cD({\bf t};\hbar))/\cD({\bf t};\hbar)=\sum_{g\geq 0}\hbar^{2g-2}\mathscr L_{g,m}({\bf t}).
\eeq
Then the Virasoro conjecture is equivalent to $\mathscr L_{g,m}({\bf t})=0$ for all $g\geq 0$ and $m\geq -1$.
We refer to $\mathscr{L}_{g,m}({\bf t}) = 0$ as the \textit{genus-$g$ $L_m$-constraint}. 
For fixed $g$ and all $m$, they are called the \textit{genus-$g$ Virasoro constraints}.  

For the simplest case, the Gromov--Witten theory of a point, the Virasoro
conjecture coincides with the celebrated Witten conjecture~\cite{Wit91}, proved
by Kontsevich~\cite{Kont92}. For manifolds with semisimple quantum product, the
lower-genus part of the conjecture was developed through a series of
works~\cite{DZ98,DZ99,Get99,Get04,LT98,Liu01,Liu02}, and the full conjecture was
eventually established by Teleman~\cite{Tel12} based on Givental’s quantization
formalism~\cite{Giv01a,Giv01b}. Without assuming semisimplicity, the Virasoro
conjecture has been verified for Calabi--Yau threefolds (due to the vanishing of
the first Chern class and dimensional reasons), for the Gromov--Witten theory of
smooth curves~\cite{OP06}, and, at genus zero, by Liu and
Tian~\cite{LT98} (see also~\cite{DZ99,Get99,Giv04}). The higher-genus part of
the conjecture remains largely open in the general non-semisimple case. In
particular, proving the genus-one Virasoro constraints is of central importance,
as it provides the first nontrivial check beyond genus zero and often serves as
a crucial step toward understanding the full higher-genus structure.

  Most Fano complete  intersections have non-semisimple quantum
  cohomology, and therefore provide a natural setting for the study of the
  Virasoro conjecture beyond the semisimple case. Let 
  \[
    X=X_{m_1,\ldots,m_r}\subset\mathbb P^{N-1}
  \]
  be a smooth Fano complete intersection of multidegree $(m_1,\ldots,m_r)$.
  After eliminating linear equations, we may assume that $m_i\geq2$ for every $i$.
  We
  call $X$ \emph{exceptional} if it falls in one of the following three cases: a
  quadric hypersurface, an even-dimensional intersection of two quadrics, or a cubic surface.
  The big quantum cohomology of a Fano complete intersection $X$ is generically
  semisimple if and only if it is exceptional~\cite{BM04,CMP10,Hu21,hu2015big}.
  Note that since $m_i \geq 2$, the Fano
  surface cases are all exceptional. Hence we focus on the case $\dim X \geq 3$.

We define \[ H^{\rm amb}(X, \mathbb Q):=\operatorname{Im}\bigl(j^*:H^*(\mathbb P^{N-1}, \mathbb
Q)\to H^*(X, \mathbb Q)\bigr), \]
where $j:X\to\mathbb P^{N-1}$ is the inclusion. We call elements of $H^{\rm
amb}(X, \mathbb Q)$ ambient classes, and we call $H^{\rm amb}(X, \mathbb Q)$ the ambient state space.
The first main result is the following theorem.
\begin{thm}\label{mainthm:genus-one-virasoro}
	Let $X\subset\mathbb P^{N-1}$ be a smooth Fano complete intersection.
	Then the genus-one Virasoro conjecture holds on the ambient state space. 
	Namely, $\mathscr L_{1,m}(\mathbf t)=0$ for $m\geq-1$ whenever $\mathbf t$ is restricted to $H^{\rm amb}(X,\mathbb Q)$.
\end{thm}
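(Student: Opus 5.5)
We plan to transport the genus-one Virasoro constraints from a semisimple theory to $X$. The target is the $\mathbb C^*$-equivariant (torus acting on the fibers) twisted Gromov--Witten theory of $\mathbb P^{N-1}$, twisted by the Euler class of $E=\bigoplus_i\mathcal O(m_i)$. With generic equivariant parameter $\lambda$, this theory is semisimple, since its quantum product deforms the equivariant quantum cohomology of $\mathbb P^{N-1}$, which is semisimple. By Teleman's classification together with Givental's quantization formalism, the twisted total descendant potential satisfies the twisted Virasoro constraints in all genera, and in particular at genus one. The twisted Virasoro operators are obtained by conjugating the untwisted ones by the quantized twisting operator; this is the Coates--Givental quantum Riemann--Roch formalism. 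The non-equivariant limit $\lambda\to0$ of the twisted theory, restricted to ambient insertions, is designed to recover the theory of $X$. The difficulty is that in genus one, quantum Lefschetz fails for stable maps, so this limit cannot be taken naively.

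To get around this, we pass through quasimap theory with light markings, in three steps.

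\textbf{Step 1.} Using the $\epsilon=0^+$ quasimap theory of $X$ with weighted (light) markings, we prove a genus-one quantum Lefschetz statement. For $X$ a complete intersection and genus one, the relevant quasimap moduli is (after the standard analysis) governed by the twisted quasimap theory of $\mathbb P^{N-1}$. The genus-one correction terms can be controlled by the reduced/ghost contributions, as in Zinger and Kim--Lho. With light markings carrying ambient descendant insertions, this expresses $X$'s genus-one $0^+$ potential restricted to ambient classes as a $\lambda\to0$ limit of twisted $0^+$ quasimap invariants of $\mathbb P^{N-1}$.

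\textbf{Step 2.} We invoke the equivariant twisted wall-crossing formula with descendants at light markings, which is the paper's second main result. It relates the $\epsilon=\infty$ (Gromov--Witten) twisted potential to the $\epsilon=0^+$ one. Concretely, the $\epsilon=0^+$ genus-one potential equals the Gromov--Witten potential evaluated at the transformed point $\mathbf t\mapsto \mathbf t+\mu(q,-\psi)$, where $\mu$ is the $I$-function correction, plus an explicit genus-zero term. The same formula holds on the $X$ side, so both theories' genus-one potentials transform by the same dilaton-shift-type change of variables.

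\textbf{Step 3.} We check that the genus-one $L_m$ constraint is compatible with these changes of variables. The genus-one Virasoro constraint can be written as a differential relation between $\cF_1$ and $\cF_0$, in the Liu--Tian/Getzler form. Because $\mu$ is a point on the Lagrangian cone, these relations are invariant under the shift, using the genus-zero Virasoro constraints of Liu--Tian. We then take the limit $\lambda\to 0$, restrict $\mathbf t$ to $H^{\rm amb}$, and compare the twisted Virasoro operators with those of $X$ on ambient classes. This comparison requires that the Hodge grading and $c_1$ data match: $c_1(T_X)=c_1(\mathbb P)-c_1(E)$ and the twisted grading operator give exactly $X$'s operators on ambient classes. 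Rank corrections from the $\mathrm{str}$ term come from $\chi(X)$ versus the twisted trace, and this mismatch is absorbed by non-ambient (primitive) cohomology in the supertrace.

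The main obstacle is Step 1 together with controlling the limit $\lambda\to0$. Twisted invariants have poles in $\lambda$ individually, and genus-one quantum Lefschetz holds only in the quasimap chamber. We would first establish that the $0^+$ twisted genus-one invariants with light markings are polynomial in $\lambda$ on ambient insertions, via localization or the cosection argument. Only after that would we justify specializing the Virasoro identity, which holds for generic $\lambda$, at $\lambda=0$.
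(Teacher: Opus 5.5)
Your skeleton matches the paper's: the semisimple twisted theory of $\mathbb P^{N-1}$, light-marking wall-crossing, and genus-one quantum Lefschetz for all-light quasimaps. But the two places where the real work lies are missing, and Step~3 fails as stated.

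\emph{First gap: the semisimple theory you start from does not satisfy Virasoro constraints.} The $T\times\mathbb C^*_\rho$-equivariant twisted theory is quasi-homogeneous only after giving $\lambda,\rho$ degree one. That is, one has $(z\partial_z+E+\Lambda)R^*=0$ with $\Lambda=\rho\partial_\rho+\lambda\partial_\lambda$, and $\Lambda$ is not a vector field on the state space. So the Givental--Teleman argument (a homogeneous $R$-matrix commutes with the Virasoro operators) does not produce the usual constraints. Conjugating by the Coates--Givental twisting operator cannot help, since the untwisted equivariant theory of $\mathbb P^{N-1}$ has the same defect.

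Concretely, the paper's $R$-matrix formula for $\langle\widetilde\hh^{\twqp k}\rangle^{\rm tw}_{1,1}$ is the Virasoro-type expression plus the anomalous terms $\tr\bigl(\widetilde\hh^{\twqp a}\twqp(\mathcal V-\mu)\widetilde\hh^{\twqp b}\twqp(\mathcal V-\mu)\bigr)$, $\tr\bigl(\widetilde\hh^{\twqp k}\twqp\Lambda(R_1^*)\bigr)$ and $\sum_\alpha\tr\bigl(\widetilde\hh^{\twqp a}\twqp\Lambda(x_\alpha^b\Psi_0^{\bar\alpha})\bar e_\alpha\twqp\bigr)$. The core of the proof is showing these tend to $0$ as $\rho,\lambda\to0$. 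This uses explicit $x_\alpha$, $\Psi$ and $R_1^*\mathbf 1$ from the Picard--Fuchs equation, with the $x_\alpha^{-1}$ pole of $R_1^*$ handled separately. It also needs the extra twisted directions $\hh^{N-r},\dots,\hh^{N-1}$ to drop out of the traces (Lemma~\ref{lem:QP-ext}). Nothing in your proposal replaces this.

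The polynomiality in $\lambda$ you worry about is the easy part. For $d>0$ an all-light genus-one quasimap has no ghost component, so $H^1(C,u^*\mathbb E)=0$ and no Zinger/Kim--Lho correction is needed. The degree-zero all-light terms, where quantum Lefschetz genuinely fails, can only be discarded for $k\ge2$.

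\emph{Second gap: primitive cohomology.} Even with $\mathbf t$ ambient, $\mathscr L_{1,m}$ involves all of $H^*(X)$. It contains the supertraces (e.g.\ $\chi(X)=\str(\mathbf 1*)$ and $\str(\mu^2)$ in $L_0$) and, through the node-splitting term, genus-zero invariants with two primitive insertions. Light markings accept only classes from $H^*_T([V/G])$, and the twisted state space is $N$-dimensional, so none of this is visible on the twisted side. For the cubic threefold, $\str(\mathbf 1*)=-6$ while $\tr(\mathbf 1\twqp)=5$. Moreover $\langle\hh\rangle^{\rm tw}_{1,1}$ has order-dependent limits $-\tfrac{5}{12}$ and $-\tfrac{25}{72}$, neither equal to $\langle\hh\rangle_{1,1}=-\tfrac12$. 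Saying the mismatch is ``absorbed by primitive cohomology'' is not an argument.

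For the same reason Step~3 cannot be carried out. The wall-crossing map only parametrizes ambient directions, and the quasimap potential is not known to satisfy any Virasoro-type constraints, so the full operators $L_m$ cannot be transported.

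The paper instead proceeds in three steps:
\begin{enumerate}
\item It uses Liu's small-phase-space form, Getzler's relation~\eqref{eqn:Get-Vir} and a Fano reconstruction argument to reduce everything to the finitely many one-point identities $\cG_k(0)=0$, $2\le k\le\dim X+1$. The cases $L_{-1},L_0$ are known in general.
\item It transports only $\langle\widetilde\hh^{*k}\rangle_{1,1}$ through the ancestor one-point wall-crossing formula. There the $\bar\psi$-term of $T_k$ produces the $\tfrac{1}{24}\str\bigl(\widetilde\hh^{*a}*D(\widetilde\hh^{*b})*\bigr)$ correction.
\item It uses Hu's genus-zero computation, which gives $\widetilde\hh*\gamma=0$ for primitive $\gamma$, to show the primitive parts of the supertraces vanish for $k\ge2$.
\end{enumerate}
That last step is exactly what fails for $L_0$, which is why $L_0$ must be excluded and handled separately.
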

\begin{remark}
	There are two related reconstruction approaches to the genus-one Gromov--Witten invariants of Fano complete intersections: the nodal approach of Argüz--Bousseau--Pandharipande--Zvonkine~\cite{ABPZ23} and the monodromy approach of Hu~\cite{Hu22}.
	It is not immediate, however, how to derive the Virasoro constraints from these reconstructions.
\end{remark}
\subsection{The strategy}
Our main strategy is to use quasimap wall-crossing to compare the Gromov--Witten
theory of $X$ and the equivariant twisted theory of $\mathbb P^{N-1}$. Suppose
$X \subset \mathbb P^{N-1}$ is defined as the zero locus of a generic section
$\sigma$ of $\mathbb E$, where
\begin{equation}
  \label{eq:E}
  \mathbb E = \bigoplus_{i = 1}^{r}\mathcal O_{\mathbb P^{N-1}}(m_i), \quad m_i>0.
\end{equation}
The equivariant Gromov--Witten theory of $\mathbb P^{N-1}$ twisted by $\mathbb E$ is semisimple after localization, 
and hence can be studied effectively using Givental's reconstruction formalism \cite{Giv01a,Giv01b,Tel12}. 
In genus zero, its
non-equivariant limit equals the Gromov--Witten theory of $X$. This is referred
to as the Quantum Lefschetz Hyperplane Principle \cite{lee2001quantum,coates2007quantum}. However, that
principle fails in genus one, for the following geometric reason. For any stable
map
\[
  u: C \longrightarrow \mathbb P^{N-1},
\]
$u$ factors through $X$ if and only if $u^*\sigma \in H^0(C, u^*\mathbb E)$
vanishes. In genus zero, we always have $H^1(C, u^*\mathbb E) = 0$. This implies
that, as $C$ and $u$ vary, $H^0(C, u^*\mathbb E)$ forms a vector bundle
$\mathcal E$ on the moduli space of stable maps to $\mathbb P^{N-1}$. And
$u^*\sigma$ forms a section of $\mathcal E$, whose vanishing locus is precisely
the subspace of maps into $X$. However, in genus one, there may be some stable
map contracting some genus-one subcurve $C^\prime \subset C$ to a point. Such a
component is referred to as a ghost component in \cite{vakil-zinger}. In that
case we have $H^1(C, u^*\mathbb E) \neq 0$ so the discussion above fails and the
non-equivariant limit may not exist.

This can be fixed via quasimap wall-crossing. For $\epsilon = 0^+$ stable
quasimaps of with light markings, which will be explained in the next
subsection, the Quantum Lefschetz Hyperplane Principle always holds, as long as
the curve class is nonzero. This theory is related to the Gromov--Witten theory
via the wall-crossing formula. The generating functions of the two theories are
related by an explicit change of coordinates. And the change of coordinates is
invertible when restricted to the ambient state space $H^{\rm amb}(X, \mathbb Q)$.

Thus, at least in principle, the Gromov--Witten theory of $X$ with ambient
insertions can be recovered from the equivariant twisted theory of $\mathbb
P^{N-1}$ by combining wall-crossing, the genus-one quantum Lefschetz principle
for all-light quasimaps, and the non-equivariant limit. We will not try to
compute the resulting invariants by a direct localization calculation. Instead,
we use the diagram below as a way of transporting structural identities, in
particular the genus-one Virasoro constraints.
\begin{equation}
  \label{eq:strategy}
  \begin{tikzcd}
    \boxed{\mathrm{GW}(X)} \ar[rrr, "\epsilon\text{-wall-crossing and}"] & &
    &\ar[lll, "\text{heavy-light wall-crossing}"]
    \boxed{\mathrm{Qmap}^{\mathrm{light}}(X)} \\
    \boxed{\mathrm{GW}(\mathbb P^{N-1})^{\mathrm{tw}}} \ar[rrr,
    "\epsilon\text{-wall-crossing and}"] \ar[u,dashed] & & & \ar[lll,
    "\text{heavy-light wall-crossing}"] \ar[u, "\substack{\text{non-equivariant}
      \\ \text{limit}}"'] \boxed{\mathrm{Qmap}^{\mathrm{light}}(\mathbb
      P^{N-1})^{\mathrm{tw}}}
  \end{tikzcd}
\end{equation}
Here ``GW'' denotes Gromov--Witten theory, ``$\mathrm{Qmap}^{\mathrm{light}}$''
denotes the $\epsilon=0^+$ quasimap theory with all markings light, and ``tw''
denotes the equivariant theory of $\mathbb P^{N-1}$ twisted by the bundle
defining $X$. The lower-left theory is equivariant and semisimple, so Givental's
quantization formalism can be applied to it. We then pass through the lower
horizontal comparison, take the non-equivariant limit on the all-light side, and
use the upper horizontal comparison to obtain the ambient genus-one Virasoro
constraints for $\mathrm{GW}(X)$.

Finally, even when we restrict the $\mathscr L_{g,m}(\mathbf t)$ to the ambient
state space $H^{\rm amb}(X, \mathbb Q)$, its definition still involves some invariant with
from the primitive cohomology $H^{\rm prim}(X,\mathbb Q)$, coming from separating a node.
Fortunately, in the $g = 1$ case, it only involves genus $0$ invariants with
non-ambient insertions, which have been computed in \cite{hu2015big}.

More strategies are introduced to actually prove the theorem using the diagram above.
To further simplify the computation, before wall-crossing,
we use Getzler's relation, together with a reconstruction argument for Fano complete
intersections, to reduce the ambient genus-one Virasoro constraints to finitely
many one-point identities.
We then apply wall-crossing to those identities, reducing most of the problem
to computations in the equivariant twisted theory. The only remaining
difficulty is that even though we only consider ambient insertions for genus-one invariants, their Virasoro constraints will involve genus-zero invariants
with a pair of primitive insertions. By definition, light marking cannot take
primitive insertions. We get around this issue by revoking the explicit
computation of genus-zero invariants from \cite{hu2015big}, which turns out to
perfectly match the expectation from genus-one Virasoro constraints.

\subsection{Wall-crossing formula}
Quasimap wall-crossing provides an effective way of computing lower genus
Gromov-Witten invariants in many situations. In this section, we roughly state the wall-crossing
results and explain why they are helpful for our computation. For the precise
statements and more details, we refer the readers to Section~\ref{sec:wall-crossing}.

The target space
for quasimaps can be a large class of GIT quotients $Y = V \git_{\theta} G$,
where $G$ is a reductive group acting on an affine scheme $V$, and $\theta$ is a
character of $G$, satisfying certain condition. A quasimap
to $Y$ consists of a pointed nodal curve $(C, x_1 ,\ldots, x_n)$ and a morphisms
$u: C \to [V/G]$ to the Artin stack quotient, such that $u^{-1}([V^{\mathrm{un}}(\theta)/G])$ does not
contain any irreducible component of $C$, where $V^{\mathrm{un}}(\theta) \subset
V$ is the $\theta$-unstable locus. The points in $u^{-1}([V^{\mathrm{un}}/G])$ are
called base points of the quasimap.

\begin{example}
  \label{eg:qmap}
    When $Y$ is a complete intersection in projective space $\mathbb
    P^{N-1}$, we write $Y = V \git_{\theta} G$, where $V \subset \mathbb C^{N}$ is
    the affine cone of $Y$, $G = \mathbb C^*$ acts on $\mathbb C^{N}$ by the scalar
    multiplication, and $\theta: G \to \mathbb C^*$ is the identity map.
    Thus the unstable locus $V^{\mathrm{un}}(\theta)$ only consists of the
    origin and $Y = V\git_{\theta}G$. 
    In general, a map $C \to [V/G]$ is equivalent to a principal $G$-bundle $P$ on $C$,
    together with a section of the associated $V$-bundle $P\times_{G}V
    \to C$.
    Thus, in this case,  a quasimap to $Y$
    consists of $(L, \varphi_1 ,\ldots, \varphi_N)$, where $L$
    is a line bundle on $C$, and $\varphi_1 ,\ldots, \varphi_N \in H^0(C, L)$
    satisfy all the defining equations of $Y$. The base points are  the
    common zeros of all those $\varphi_i$'s.

    Let $\mathcal O_{[\mathbb C/ N]}(1)$ be the line bundle on $[\mathbb C^N/
    \mathbb C^*]$ induced by the standard representation of
    $\mathbb C^*$. The
    restriction of $\mathcal O_{[\mathbb C/ N]}(1)$ to  $\mathbb P^{N-1} \subset
    [\mathbb C^N/ \mathbb C^*]$ is equal to $\mathcal O_{\mathbb P^{N-1}}(1)$.
    Then under the above correspondence, we have $u^* \mathcal O_{[\mathbb C^N/
      \mathbb C^*]}(1) = L$, and $\varphi_1 ,\ldots, \varphi_N$ are the pullback
    of the coordinate function on $V = \mathbb C^N$.
  \end{example}

  For any rational number $\epsilon > 0$, Ciocan-Fontanine--Kim--Maulik defined
$\epsilon$-stable quasimaps. They showed that 
the moduli $Q^{\epsilon}_{g,n}(Y, \beta)$ of genus-$g$, $n$-pointed,
$\epsilon$-stable quasimaps of any curve class $\beta$ is a proper
Deligne--Mumford stack with a canonical perfect obstruction theory, inducing a virtual cycle
$[Q^{\epsilon}_{g,n}(Y, \beta)]^{\mathrm{vir}} \in H_*(Q^{\epsilon}_{g,n}(Y,
\beta), \mathbb Q)$. Thus, quasimap invariants are defined in a similar way to the
Gromov--Witten theory.
For $\epsilon \to \infty$, an
$\epsilon$-stable quasimap is the same as a stable map to $Y$, and the
invariants are the Gromov--Witten invariants.

The space of stability conditions $\mathbb Q_{>0}$ has a wall-and-chamber
structure and there are
only finitely many walls for each fixed curve class $\beta$.
Let $\omega_{C}$ be the dualizing sheaf of $C$ and $\omega_{C}^{\log} =
\omega_{C}(x_1 + \cdots + x_n)$, where $x_1 ,\ldots, x_n$ are the marked
points.
We call an irreducible component $C^\prime$ a
rational tail if $\deg(\omega_{C}^{\log}|_{C^\prime}) = -1$, namely,
$C^\prime$ is a smooth rational component with one special point (node or
marking).
Roughly speaking, as $\epsilon$ decreases, rational tails are
contracted to base points and the domain curve becomes simpler.
For $\epsilon \to 0^+$,
the stability amounts to that the domain curve $C$ does not have any
irreducible component $C^\prime$ such that
$\deg(\omega_{C}^{\log}|_{C^\prime}) < 0$ and in case
$\deg(\omega_{C}^{\log}|_{C^\prime}) = 0$, $u|_{C^\prime}$ must not be a
constant regular map into $Y$.

Ciocan-Fontanine--Kim conjectured a wall-crossing formula which says that the
quasimap invariants of different chambers are equivalent \cite{ciocan2017higher, ciocan2020quasimap}. More precisely, those
invariants are computable from each other via an explicit wall-crossing formula once the small
$I$-function is known. The formula was proved in many cases
\cite{ciocan2014wall, cheong2015orbifold, ciocan2017higher, ciocan2020quasimap,
  Clader2024HigherGenus, Wang_2025}
and in full generality
by \cite{zhou2022quasimap}.

Since $\epsilon = 0^+$ stability forbids all rational tails,
 there are no ghost components for genus $1$,
$\epsilon = 0^+$ stable quasimap if there are no markings. Kim--Lho used this
to reprove the mirror theorem for elliptic Gromov--Witten for smooth Calabi-Yau
complete intersections in projective spaces \cite{kim2018mirror,
  zinger2009reduced, popa2013genus, li2009genus}. 
However, markings are crucial for our purpose. When there are many markings, 
the domain curve of an $\epsilon = 0^+$ stable quasimap
may have many rational components. Thus, ghost components may still exist.
The domain curve can be further simplified by introducing the notion of
weighted markings, similar to \cite{hassett2003moduli}. It was introduced by
\cite{ciocan-fontanine2016}. For simplicity, in this paper we only consider an extreme
case where $\epsilon = 0^+$, and the markings are either of weight $1$ (heavy markings), or of weight
$0^+$ (light markings).Heavy markings are just like the usual markings as
before. While light markings can collide with each
other and collide with base points. On the other hand, a smooth rational
component $C^\prime \subset C$ containing with one node of $C$ and only light
markings will not be stable. See Definition~\ref{def:stability-general} for the
full definition. 

Since light markings can be mapped to the unstable locus, the
evaluation maps at the light markings land in $[V/G]$. Thus when defining the
quasimap invariant with light markings, the light markings can only accept
insertions from $H^*([V/G], \mathbb Q)$. That is the reason why we restrict the study to
ambient cohomology.

  When all markings are heavy, we recover the previously introduced $\epsilon = 0^+$ stable quasimaps.
  By heavy-light wall-crossing we mean replacing heavy markings by light
  markings.
  The domain curves get further simplified as
  more markings become light. 
  The wall-crossing formula was proved in the genus $0$ case for target
  spaces with a good torus action in \cite{ciocan-fontanine2016}. The
  last author introduced a master space that geometrically realized the
  heavy-light wall-crossing for weighted FJRW theory
  \cite{zhou2020higher-with-pg-no}. The same technique is later used in the
  appendix of \cite{clader2017higherLG}, which is also written by the last
  author. Then Pinharry used the construction to prove the heavy-light
  wall-crossing for quasimaps in her thesis \cite{pinharry2020weighted}.

  In this paper, we need to generalize all the wall-crossing formulas to
  equivariant version and twisted version.
  The geometric constructions for proving the
  wall-crossing formula, namely the master space construction, remains the same.
  After briefly introducing that construction, 
  we will explain why the localization technique still 
  produces a wall-crossing formula in the equivariant and twisted setting.

  The wall-crossing procedures used in this paper can thus be summarized as
  \begin{equation*}
    \begin{tikzcd}
      \boxed{\substack{\text{Gromov--Witten} \\ \text{invariants}}}
      \ar[rr, "\epsilon\text{-wall-crossing}"]
      & & \ar[ll]
      \boxed{
        \substack{
        \epsilon = 0^+\text{ stable}
        \\
        \text{quasimap invariants}
        \\\text{(with heavy markings)}
        }
      } 
      \ar[rr, "\text{heavy-light}"]
      & &
      \ar[ll, "\text{wall-crossing}"]
      \boxed{
        \substack{
        \epsilon = 0^+\text{ stable}
        \\
        \text{quasimap invariants}
        \\\text{with light markings only}
        }
      } 
    \end{tikzcd}
  \end{equation*}

  Combining the two wall-crossings, we get the following theorem, which
  generalizes \cite[Theorem 1.2.1]{pinharry2020weighted} to the twisted and
  equivariant case, and also allowing $\psi$-classes at light markings.
  \begin{theorem}[Theorem~\ref{thm:wc-main}]
    \label{thm:wc-main-intro}
    Let $\mathcal F_g^{\infty,\tw}(\bt)$  be the potential function of
    (twisted, equivariant) Gromov--Witten theory, and let
    $\mathcal F_g^{0+,\tw}(\bt;
    \mathbf s)$ be the  potential function of
    $\epsilon = 0^+$ stable
    quasimap theory with weighted markings, then
    \begin{equation}  \label{eqn:wc-intro}
      \mathcal F_g^{0+,\tw}(\bt;  \mathbf s)=
      \mathcal F_g^{\infty,\tw}(\bt + [\mathbb I(\mathbf s,q,z)-z]_{+}  ) .
    \end{equation}
  \end{theorem}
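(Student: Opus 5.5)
We prove the formula as the composite of the two wall-crossings in the diagram above. Each is realized geometrically by a master space, and the equivariant and twisted classes only change the localization bookkeeping, not the geometry.

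\emph{Step 1: $\epsilon$-wall-crossing.} Here we pass from $\epsilon=\infty$ to $\epsilon=0^+$ with all markings heavy. We follow the master space construction of \cite{zhou2022quasimap}: for each wall $\epsilon_0=1/d_0$ there is a proper DM stack $MQ$ with a $\mathbb C^*$-action and a $\mathbb C^*$-equivariant perfect obstruction theory. Its fixed loci are
\begin{itemize}
\item the moduli on the two sides of the wall, and
\item loci where degree-$d_0$ rational tails are replaced by ``entangled'' base points.
\end{itemize}
For the twisted theory we insert the $\mathbb C^*$-equivariant (and $T$-equivariant) lift of the twisting class $e_T(\mathcal E)$ (or its inverse) into the localization integrand. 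We use the lift $R\pi_*f^*\mathbb E$, which is defined uniformly on the master space since $\mathbb E$ comes from a representation of $G$. The key point is that this class is multiplicative along the node-splitting of fixed loci, so the contribution of a rational-tail locus factorizes as an integral over the smaller moduli times a graph-space residue. That residue is exactly the coefficient of the (twisted, equivariant) small $I$-function. Taking the residue at $t=0$ of the polynomial-in-$t$ pushforward then gives the one-wall formula. Iterating over the finitely many walls for fixed $\beta$ gives $\mathcal F^{0+}_g(\bt)=\mathcal F^\infty_g(\bt+\mu(q,-\psi))$, with $\mu=[I-z]_+$ in the twisted sense. Properness of the fixed loci needs only a good torus action or properness of the underlying moduli. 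In the twisted case the relevant moduli are those of $\mathbb P^{N-1}$, which are proper, so $T$-equivariance causes no issue.

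\emph{Step 2: heavy-light wall-crossing.} We turn heavy markings light one at a time, using the master space of \cite{zhou2020higher-with-pg-no,pinharry2020weighted}. When the $k$-th marking crosses from weight $1$ to $0^+$, the extra fixed loci are those where that light marking sits on a rational tail carrying only light markings and positive degree, or coincides with a base point. Their localized contributions again factorize into the remaining theory times a generating series. We identify that series with the $\mathbf s$-dependent part of $\mathbb I(\mathbf s,q,z)$, namely the $I$-function with light insertions $\mathbf s$, including the twisting factor.

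The new feature is allowing $\psi$-classes at light markings. We handle this by writing the light-marking $\psi$ on the master space as the pullback of the corresponding class on the fixed locus, corrected by the $\mathbb C^*$-weight of the node. This produces the $z$-dependence inside $\mathbb I(\mathbf s,q,z)$, so the shift is by the full $[\mathbb I(\mathbf s,q,z)-z]_+$ rather than only its $z^0$ part. Combining Steps 1 and 2, and using that all shifts are by series in the ideal generated by $q$ and $\mathbf s$ (so compositions converge and commute appropriately), yields \eqref{eqn:wc-intro}.

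\emph{Main obstacle.} We expect the hardest part to be this $\psi$-class comparison at light markings on the fixed loci, which must be shown to assemble exactly into $\mathbb I(\mathbf s,q,z)$. Alongside it, we must check that the twisting class has the required splitting and residue behavior on each fixed locus. For the $\psi$-classes we will first test the computation in genus zero against \cite{ciocan-fontanine2016}.
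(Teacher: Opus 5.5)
Your architecture matches the paper's. It uses a master space to move one marking across the heavy/light wall, then $\mathbb C^*_z$-localization with a residue argument, then resummation into $\mathbb I$, and finally composition with the $\epsilon$-wall-crossing, which the paper simply cites. What is missing is the one ingredient that is actually new in the twisted setting, and you leave it as something ``we must check'': how to make sense of ``insert the twisting class into the integrand and take the residue of the polynomial-in-$z$ pushforward'' (your $t$ is the paper's $z$).

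In general $e_{T\times\mathbb C^*_\rho}(R\pi_*u^*\mathbb E)$ contains $1/e(R^1\pi_*u^*\mathbb E)$. So it is neither a cohomology class nor the inverse of one; it exists only as a $\rho^{-1}$-adic series. Consequently the integral over the master space $\mathcal M$ is polynomial in $z$ only coefficientwise in $\rho$.

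The paper resolves this in three steps:
\begin{itemize}
\item It writes $[\mathcal M]^{\rm vir,tw}$ as a $\rho^{-1}$-series of honest $T\times\mathbb C^*_z$-equivariant classes capped with $[\mathcal M]^{\rm vir}$.
\item It localizes coefficient by coefficient: first over the $T\times\mathbb C^*_z$-fixed components, then regrouping onto the $\mathbb C^*_z$-fixed loci $F_\star$. The localization identity then holds in $\mathbb F(z)[\rho,\rho^{-1}]\!]$.
\item It defines $\mathrm{Res}$ and $[\,\cdot\,]_+$ by expanding first in $\rho^{-1}$ and only then in $z^{-1}$.
\end{itemize}
This order is essential: $\mathrm{Res}\,\frac{1}{z+\rho}$ is $0$ with it and $1$ with the other. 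Without fixing it, you have not justified the vanishing of the total residue. Nor have you justified identifying each fixed-locus residue with $[(-z)^{b_J}z^{1-|J|}\Phi_{\beta'}(\prod_{j\in J}\eta_j)]_+$. The same issue arises in your Step~1 if you redo the $\epsilon$-wall-crossing in the twisted setting.

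Conversely, the step you single out as the main obstacle is immediate once the fixed loci are described correctly. On $F_{\beta',J}$, the light markings $y_j$, $j\in J$, all sit at the point $0$ of a bubble $C'\cong\mathbb P^1$ whose quasimap lies in $F_{\beta'}$. So $\psi_{y_j}$ restricts to the pure weight $-z$, and $\eta_j\psi^{b_j}$ becomes $(-z)^{b_j}\eta_j$ inside $\Phi_{\beta'}$. This is where $\mathbf s(-z)$ in $\mathbb I(\mathbf s(-z),q,z)$ comes from.

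There is no node correction to $\psi_{y_j}$. The node at $\infty$ only contributes the smoothing factor $1/(-z-\psi_\bullet)$, and separating the $y_j$ from $y_1$ contributes $z^{1-|J|}$.

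These loci include $\beta'=0$ with $|J|\ge 2$, i.e.\ collisions without a base point. Your enumeration (``positive degree, or coincides with a base point'') appears to drop them. Yet with light descendants they give nonzero terms $[(-z)^{b_J}z^{1-|J|}\prod_{j\in J}\overline\eta_j]_+$.

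Also, positive powers of $z$ in the shift are not produced by light descendants: $\Phi_d(\lh^a)$ already contains $(\hh+dz)^a$.

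Iterating over the light markings gives a sum over set partitions. It resums to the shift $[\mathbb I(\mathbf s(-z),q,z)-\mathbb I(0,q,z)]_+$, and the $\epsilon$-wall-crossing then adds $[\mathbb I(0,q,z)-z]_+$.
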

  Here, the big $\mathbb I$-function is some generating series of genus $0$
  quasimap invariants with parametrized domains.
  We refer the reader to Section~\ref{sec:wall-crossing} for the precise
  definitions of those objects involved in the theorem.

  We will apply the above theorem in two situations.
  First we take $Y = V\git_{\theta} G$ to be the Fano complete intersection $X$
  and the twisting is trivial. This give the upper arrow in \eqref{eq:strategy}.
  Then we take 
  $Y = \mathbb P^{N-1}$ and consider the theory twisted by
  $\mathbb E$. This gives the lower arrow in \eqref{eq:strategy}.
  We will take $\mathbf t = 0$ and get
  \begin{equation}
    \label{eq:wc-t=0}
    \mathcal F_g^{0+,\tw}(0 ;  \mathbf s)=
      \mathcal F_g^{\infty,\tw}([\mathbb I(\mathbf s,q,z)-z]_{+}  ).
  \end{equation}
  This relation is still invertible since
  \[
    [\mathbb I(\mathbf s,q,z)-z]_{+} =
    \mathbf s + O(q),
  \]
  where $q$ is Novikov variable keeping track of the degrees.

  The left hand side of \eqref{eq:wc-t=0} is the potential function of
  $\epsilon = 0^+$ stable quasimaps with light markings only. For those
  invariants we have the Quantum Lefschetz Hyperplane Principle in genus $1$.
  More precisely, we need to first extend the $\mathbb E$ in \eqref{eq:E} to a
  bundle on $[\mathbb
  C^{N}/ \mathbb C^*]$. By abuse of notation, we still denote the unique
  extension by $\mathbb E$.
  Suppose we have quasimap $u : C \to [\mathbb
  C^{N}/ \mathbb C^*]$ given by a line
  bundle $L$ with sections $\varphi_1 ,\ldots, \varphi_N$ (c.f.\
  Example~\ref{eg:qmap}). Then
  \[
    u^*(\mathbb E) = \bigoplus_{i = 1}^{r}L^{\otimes {m_i}}. 
  \]
  If $C$ has genus $1$, the quasimap is $\epsilon = 0^+$ stable with light markings
  only, and the curve class is nonzero, then $C$ is either irreducible of (arithmetic genus $1$), or a loop of
  smooth rational curves. In either case, there are no ghost components. Since $L$ has nonnegative degree on
  each irreducible component, we have 
  \[
    H^1(C,u^*(\mathbb E))= 0.
  \]
  Thus, for that stability, the non-equivariant limit of $\mathbb E$-twisted
  theory of $\mathbb P^{N-1}$ exits and is equal to the untwisted theory of $X$.
  This gives the vertical arrow on the right in \eqref{eq:strategy}.

\subsection{Plan of the paper} 
This paper is organized as follows.
In \S \ref{sec:wall-crossing}, we establish the equivariant twisted wall-crossing formula and specialize it to one-point invariants of Fano complete intersections. 
In \S \ref{sec:virasoro}, we prove the genus-one ambient Virasoro conjecture for Fano complete intersections.
Appendix~\ref{sec:CohFT-comp-tw} contains the twisted-theory computations needed in the proof.

\par
\vspace{1.2\baselineskip}
\noindent\textbf{Acknowledgements.}
Parts of this work were presented at the Tianyuan Mathematical International Exchange Center (2023), Sun Yat-sen University (2023), and KIAS (2024). We thank the organizers for the opportunities to present and discuss this work.

The work of S.~Guo was supported by the National Natural Science Foundation of China under Grant No. 12225101.
The work of Q.~Zhang was supported by the Guangdong Basic and Applied Basic Research Foundation under Grant No. 2025A1515110325.
The work of Y.~Zhou was partially supported by
Shanghai Pilot Program for Basic Research-Fudan Univ.\ 21TQ1400100 (22TQ001),
NSFC for Innovative Research Groups (No. 12121001).
Y.~Zhou is a member of the Key Laboratory of Mathematics for Nonlinear Sciences,
Fudan University. Y.~Zhou would also like to thank the
support of Alibaba Group as a DAMO Academy Young Fellow, and would like to
thank the support of Xiaomi Corporation as a Xiaomi Young Fellow.

\section{Wall-crossing formula for GIT quotients}
\label{sec:wall-crossing}

\subsection{The setup} \label{sec:setup}

  Let $Y = V \git_{\theta} G$ be a
  GIT quotient where
  \begin{enumerate}
  \item
    $G$ is a reductive group acting an affine scheme $V$;
  \item
    $\theta$ is a character of $G$ such that the semistable locus
    $V^{\mathrm{s}}(\theta) = V^{\mathrm{ss}}(\theta) \neq \emptyset$, where
    $V^{\mathrm{ss}}(\theta)$ (resp.~$V^{\mathrm{s}}(\theta)$) is the GIT
    semistable (resp.~stable) locus;
  \item
    $V^{\mathrm{s}}(\theta)$ is smooth and $V$ has at worst lci singularities;
  \item
    for simplicity we further assume that $G$ acts on $V^{\mathrm{s}}(\theta)$
    freely and the affine quotient $V \git_0 G$ is a point.
  \end{enumerate}

  In addition we consider equivariant theory and twisted theory. For this, we consider an
  additional torus $T = (\mathbb C^{*})^N$ acting on $V$ that commutes with the $G$-action, and let
  $\mathbb E$ be an
  $G\times T$-equivariant bundle on $V$. It descends to a $T$-equivariant bundle
  on $[V/G]$, which we also denote by $\mathbb E$ by abuse of notation.
  In order make sure that the twisted theory is well-defined, we consider an
  additional copy of $\mathbb C^*$, denoted by $\mathbb C^*_{\rho}$
  acting only on the fibers of $\mathbb E$ by scalar multiplication.

  We denote the equivariant cohomology of a point by
  \[
    H^*_{T \times \mathbb C^*_{\rho}}(\mathrm{point}, \mathbb Q) = \mathbb Q[\lambda_1 ,\ldots,
    \lambda_N, \rho],
  \]
 where we use the convention that $-\lambda_i$ is the equivariant
 	first Chern class of the $i$-th standard character of
 	$T=(\mathbb C^*)^N$, while $-\rho$ is the equivariant first Chern
 	class of the standard character of $\mathbb C_\rho^*$ acting by
 	scalar multiplication on the fibers of $\mathbb E$.
  We abbreviate $\lambda_1 ,\ldots, \lambda_N$ as $\lambda$, thus
  \[
    \mathbb Q[\lambda] = \mathbb Q[\lambda_1 ,\ldots, \lambda_N], \quad \mathbb
    F = \mathbb Q(\lambda) = \mathbb Q(\lambda_1 ,\ldots, \lambda_N).
  \]
  and so on.
\subsection{Quasimaps invariants with weighted markings}
In this subsection, we recall from \cite{ciocan2014stable, ciocan-fontanine2016}
the definition and some basic facts of quasimap invariants with weighted markings. For simplicity, we
focus on the special cases when the stability parameter $\epsilon = 0^+$, 
and the weights of the markings are either $1$ (heavy markings) or $0^+$ (light markings).

\begin{definition}
  A weighted-pointed curve with $m$ heavy markings
  and $n$ light markings consists of
 \[
   (C, x_1 ,\ldots, x_m, y_1 ,\ldots, y_n),
 \]
 where $C$ is a projective nodal curve with marked points $x_1 ,\ldots, x_m, y_1
 ,\ldots, y_n$, such that
 \begin{itemize}
 \item
   all marked points are in the smooth locus of $C$;
 \item
   $x_1 ,\ldots, x_m$ are pairwise distinct;
 \item
   $y_i \neq x_j$ for any $i$ and $j$.
 \end{itemize}
 The points $x_1 ,\ldots, x_m$ are called heavy markings and the points $y_1
 ,\ldots, y_n$ are called light markings.
\end{definition}

\begin{definition}[\cite{ciocan-fontanine2016}]
  Let $(C, x_1 ,\ldots, x_m, y_1 ,\ldots, y_n)$ be a weighted-pointed curve with
  $m$ heavy markings and $n$ light markings,
  A prestable quasimap from $C$ to $Y$ is a map
  \[
    u: C \longrightarrow [V / G]
  \]
  such that the heavy markings, the nodes and the generic points of each
  irreducible component of $C$ are mapped into $Y =
  [V^{\mathrm{ss}}(\theta)/G]$.
  The subset $u^{-1}([V^{\mathrm{us}}(\theta)/ G])$ is called the base locus and
  consists of the base points.
\end{definition}

More concretely, $u$ is equivalent to a principal $G$-bundle $P$ on $C$,
together with a section $\sigma$ of the associated $V$-bundle $P\times_{G}V
\to C$, and the base locus is $\sigma^{-1}(P
\times_{G}V^{\mathrm{us}}(\theta))$.

Set $\omega_{C}^{\mathrm{log}} = \omega_{C}(x_1 + \cdots + x_m)$.

\begin{definition}[\cite{ciocan-fontanine2016}]
  \label{def:stability-general}
  The prestable quasimap above is said to be $\epsilon = 0^+$-stable if 
  \begin{itemize}
  \item
    $C$ does not have any
    irreducible component $C^\prime$ such that
    $\deg(\omega_{C}^{\log}|_{C^\prime}) < 0$;
  \item
    if $C^\prime \subset C$ is an irreducible component with 
    $\deg(\omega_{C}^{\log}|_{C^\prime}) = 0$, then 
    either $C^\prime$ contains some
    light markings $y_1 ,\ldots, y_n$, or  $u|_{C^\prime}$ is \textit{not} a
    constant regular map into $Y$.
  \end{itemize}
\end{definition}

  Let $Q_{g,m\mid
    n}(Y, \beta)$ be the moduli of genus $g$, $\epsilon=0^+$ stable quasimaps to $Y$, of curve
  class $\beta$, with $m$ heavy markings and $n$ light markings. By
  \cite{ciocan-fontanine2016}, it is a proper
  Deligne--Mumford stack with a virtual fundamental class $[Q_{g,m\mid n}(Y, \beta)]^{\mathrm{vir}}$.
  Here, the curve class $\beta$ of a map $u: C \to [V/G]$ is defined to be 
  the group homomorphism
  \[
    \beta: \mathrm{Pic}([V/G]) \longrightarrow \mathbb Z, \quad \beta(L) := \deg_{C}(f^*L),
  \]
  where $\deg_{C}(\cdot)$ denotes the degree of a line bundle on $C$.

  Since by assumption the heavy markings are not base points, restricting the
  quasimap to a heavy marking $x_i$ defines the evaluation map
  \[
    \mathrm{ev}_{x_i}: Q_{g,m|n}(Y, \beta) \longrightarrow  Y.
  \]
  In contrast, the light markings may be base points.
  Hence restricting the quasimap to a light marking $y_j$ defines the evaluation
  map
  \[
    \widehat {\mathrm{ev}}_{y_j}: Q_{g,m|n}(Y, \beta) \longrightarrow [V/G].
  \]

  Let $\pi:\mathcal C \to Q_{g,m\mid n}(Y, \beta)$ be the universal curve and
  $u: \mathcal C \to [V/G]$ be the universal quasimap.
  \subsection{Heavy-light wall-crossing}

  For $a_1 ,\ldots, a_m, b_1 ,\ldots, b_n \geq 0$, and equivariant cohomology classes
  \[
    \gamma_1 ,\ldots, \gamma_m \in H^*_{T}(Y, \mathbb Q), \quad \eta_1 ,\ldots, \eta_n \in
  H^*_T([V/G], \mathbb Q) = H^*_{G\times T}(V, \mathbb Q),
  \]
  the equivariant twisted quasimap invariants with descendants
  \begin{equation}
    \label{eq:def-of-invariants}
    \langle \gamma_1 \psi^{a_1}, \ldots, \gamma_m \psi^{a_m}\mid
      \eta_1\psi^{b_1} ,\ldots, \eta_n \psi^{b_n}
      \rangle^{0^+, \mathrm{tw}}_{g,m|n, \beta} \in \mathbb Q(\lambda, \rho)
  \end{equation}
  are defined to be
  \[
    \begin{aligned}
      \int_{[Q_{g,m|n}(Y, \beta)]^{\mathrm{vir}}}
      e_{T \times \mathbb C^*_{\rho}}(R\pi_*u^*\mathbb E) \cdot
      \prod_{i=1}^{m}\mathrm{ev}_{x_i}(\gamma_i)\psi_{x_i}^{a_i} \cdot
      \prod_{j=1}^{n}\widehat{\mathrm{ev}}_{y_j}(\eta_j) \psi_{y_j}^{b_j},
    \end{aligned}
  \]
  where $\psi_{x_1} ,\ldots,\psi_{x_m}, \psi_{y_1} ,\ldots, \psi_{y_n}$ are the
  $\psi$-classes at each marking, and $e_{T \times \mathbb C^*_{\rho}}(\cdot)$ denote
  the equivariant Euler class with respect to the $T \times \mathbb
  C^*_{\rho}$-action.

  For $n = 0$, we write
  \[
    \langle \gamma_1 \psi^{a_1}, \ldots, \gamma_m \psi^{a_m}
    \rangle^{0^+, \mathrm{tw}}_{g,m, \beta} 
    =
    \langle \gamma_1 \psi^{a_1}, \ldots, \gamma_m \psi^{a_m}\mid \emptyset
    \rangle^{0^+, \mathrm{tw}}_{g,m|0, \beta}.
  \]
  This is the usual (twisted) $\epsilon = 0^+$ stable quasimaps in \cite{ciocan2014stable}.
  \begin{remark}
    \label{eq:polynomiality-rho-expansion}
    Since $\mathbb C^*_{\rho}$
    only acts on the fibers of $\mathbb E$ and acts nontrivially, the
    equivariant Euler class becomes invertible after we invert $\rho$ and take
    the $\rho^{-1}$-adic completion. Since $Q_{g,m\mid n}(Y, \beta)$ is proper,
    we see that \eqref{eq:def-of-invariants} also lies in
    $\mathbb Q[\lambda][\rho, \rho^{-1}]\!]$.
  \end{remark}

  The wall-crossing formula to be stated will express the difference
  \[
    \begin{aligned}
      \langle \gamma_1 \psi^{a_1}, \ldots,  \gamma_m \psi^{a_m}\mid
      \eta_1\psi^{b_1} &,\ldots, \eta_n \psi^{b_n}
                               \rangle^{0^+, \mathrm{tw}}_{g,m|n, \beta} \\
      - \quad & \langle \gamma_1 \psi^{a_1}, \ldots,  \gamma_m \psi^{a_m}, \overline{\eta}_1\psi^{b_1}\mid
           \eta_2\psi^{b_2} ,\ldots, \eta_n \psi^{b_n}
                                    \rangle^{0^+, \mathrm{tw}}_{g,m + 1|n - 1, \beta}
    \end{aligned}
  \]
  as a summation of some similar invariants, but with fewer markings or of
  curve classes of lower degree. Here $\overline{\eta}_1$ is the restriction of
  $\eta_1$ to $Y$.

  To state the wall-crossing formula, 
  following \cite{ciocan-fontanine2016}. 
  Consider the so-called loop space 
  \begin{equation}
    \label{eq:loop-space}
    Q(\mathbb P^1, Y, \beta)
  \end{equation}
  of quasimaps from a fixed $\mathbb P^1$ to $Y$ of curve class $\beta$. 
  It carries a $\mathbb C^*$-action induced by the action on $\mathbb P^1$:
  \[
    t\cdot [x, y] = [tx, y], \quad \text{for } t \in \mathbb C^*,
  \]
  where $x, y$ are the homogeneous coordinates on $\mathbb P^1$.
  We denote this copy of $\mathbb C^*$ by $\mathbb C^*_z$.
  We use the convention that the equivariant first Chern class of its standard representation is $-z$.
  We refer to the
  point $[0, 1]$ as $\infty$ so that the  tangent space to $\infty \in
  \mathbb P^1$ is the standard representation of $\mathbb C^*_z$.
  Let $F_{\beta} \subset Q(\mathbb P^1, Y, \beta)$ be the fixed locus where $0$
  is the only base point.

  As in \cite{ciocan-fontanine2016}, $Q(\mathbb P^1, Y, \beta)$ has a canonical
  $T\times \mathbb C^*_z$-equivariant perfect obstruction theory, inducing the virtual
  fundamental class $[F_{\beta}]^{\mathrm{vir}}$ and the virtual
  normal bundle $N^{\mathrm{vir}}_{F_\beta/ Q(\mathbb P^1, Y, \beta)}$ of $F_{\beta}$. Let $\pi_{\beta}: F_{\beta} \times
  \mathbb P^1 \to F_{\beta}$ be the universal curve over $F_{\beta}$ and
  $u_{\beta}: F_{\beta} \times \mathbb P^1 \to [V/G]$ be the universal map.
  Consider the twisted virtual cycle
  \begin{equation}
    \label{eq:twisted-vir}
    [F_{\beta}]^{\mathrm{vir}, \mathrm{tw}} =
    e_{T\times \mathbb C^*_{\rho} \times \mathbb C^*_{z}}(R\pi_{\beta*}u_{\beta}^*\mathbb
    E)\cap [F_{\beta}]^{\mathrm{vir}}.
  \end{equation}
  In order to define the big $\mathbb I$-function, which will appear in the
  wall-crossing formula, 
  we think of $\infty$ as a heavy marking and $0$ as the accumulation of
  several light markings. Thus we consider
   two evaluation maps
  \[
    \widehat{\mathrm{ev}}_{0}: F_{\beta} \longrightarrow [V/G] \quad \text{and} \quad 
    \mathrm{ev}_{\infty}: F_{\beta} \longrightarrow Y,
  \]
  defined by evaluating the quasimap at $0$ and $\infty$ respectively.

  To state and prove the wall-crossing formula, first define the $\mathbb Q[\lambda]$-linear operator
  \[
    \Phi_{\beta}: H_{T}^*([V/G], \mathbb Q) \longrightarrow H^*_T(Y, \mathbb Q)
    [z, z^{-1}]\!] [\rho, \rho^{-1}]\!]
  \]
  by
  \[
    \Phi_{\beta}(\gamma) =   \frac{1}{e_{T\times \mathbb C^*_{\rho}}(\mathbb E)}(\mathrm{ev}_{\infty})_*
    \Big(\frac{\widehat{\mathrm{ev}}_0^*\gamma  
       \cap [F_{\beta}]^{\mathrm{vir, tw}}}{e_{T\times \mathbb
        C^*_{\rho} \times \mathbb C^*_z}(N^{\mathrm{vir}}_{F_{\beta}}
      )}\Big).
  \]
  Note that in particular $\Phi_{0}$ is the Kirwan map.

  \begin{example}
    \label{eg:proj-space-first}
    Let $T = (\mathbb C^*)^{N}$ act on $V =\mathbb C^{N}$ by
    \[
      (t_1 ,\ldots, t_N) \cdot [x_1 ,\ldots, x_N] = [t_1x_1 ,\ldots, t_Nx_N],
      \quad \text{ for } (t_1 ,\ldots, t_N) \in T, (x_1 ,\ldots, x_N) \in V.
    \]
    Consider the twisted theory of $\mathbb P^{N-1}$, twisted by $\mathbb E =
    \bigoplus^r_{i=1} \mathcal O_{[\mathbb C^{N} / \mathbb C^*]}(m_i)$, $m_i >
    0$.
    The $T$-action on $V$ induces an action on $[\mathbb C^{N}/ \mathbb C^*]$
    and lifts to a canonical linearization on $\mathbb E$.

    Define the hyperplane classes
    \[
      \mathrm{h} = c_1(\mathcal O_{[\mathbb C^{N} / \mathbb C^*]}(1)) \in H^*_T ([\mathbb
      C^{N}/\mathbb C^*], \mathbb Q), \quad \mathrm{H} = \mathrm{h}|_{\mathbb P^{N-1}} = c_1(\mathcal
      O_{\mathbb P^{N-1}}(1))
      \in H^*_T(\mathbb P^{N-1}, \mathbb Q).
    \]
    and $z$ be the equivariant 
    parameter of the $\mathbb C^*$ acting on the domain $\mathbb P^1$.
    We identify the curve class $\beta$ with its degree $d$.
    Then for $a
    \geq 0$, 
    \[
      \Phi_{d}(\mathrm{h}^a) = (\mathrm{H} + dz)^a \frac{\prod_{i = 1}^r \prod_{k = 1}^{m_id}(m_i \mathrm{H}
        + kz - \rho)}
      {\prod_{k=1}^d\prod_{i = 1}^N (\mathrm{H} +  kz - \lambda_i)}.
    \]
    If we set $\rho = 0$, we get the formula of the operator for the un-twisted theory of the
    complete intersection defined by a generic section of $\mathbb E$.
  \end{example}
  \begin{remark}
    We have abused the notation here. For simplicity of the explanation, let us
    consider the case without the $T$ action on $Y$.
    To form the pullback $\widehat{\mathrm{ev}}_0^*:
    H^*([V/G]) \to H^*_{\mathbb C^*}(F_{\beta}) \cong H^*(F_{\beta})[z,
    z^{-1}]$, we should view $\widehat{\mathrm{ev}}_0$ as a map
    \[
      [F_{\beta} / \mathbb C^*] \cong F_{\beta} \times B \mathbb C^*
      \longrightarrow [V / G].
    \]
    In other words, the map $F_{\beta} \to [V/G]$ is not only $\mathbb
    C^*$-invariant but has extra $\mathbb
    C^*$-equivariant structure.
    In concrete terms, recall that a map $F_{\beta} \to [V/G]$ is a principal
    $G$-bundle $P \to F_{\beta}$ together with a $G$-equivariant map $P \to V$.
    Then the equivariant structure is a $\mathbb C^*$-action on $P$ such that
    $P\to F_{\beta}$ and $P \to V$ are both invariant.
    This $\mathbb C^*$-equivariant structure is determined as the pullback of the
    $\mathbb C^*$-equivariant structure of the universal quasimap over $F_{\beta}$
    \[
      u_{\beta}: F_{\beta} \times \mathbb P^1 \longrightarrow [V/G].
    \]
    The key is that the generic point of $\mathbb P^1$ is assumed to the mapped into
    the stable locus $Y \subset [V/G]$, which is a scheme. Hence, for $t\in \mathbb C^*$, the
    isomorphism between $t^*u_{\beta}$ and $u_{\beta}$ must be uniquely determined. 
    This is implicitly used in the computation in \cite{ciocan-fontanine2016}.
    In \cite{ciocan-fontanine2016},  it was
    shown that for a line bundle $L$ on $[V/G]$,
    \[
      \widehat{\mathrm{ev}}_0^*(c_1(L)) = {\mathrm{ev}}_\infty^*(c_1(L)) + \beta(L) z.
    \]
    Indeed, the pullback of $L$ to $F_{\beta} \times \mathbb P^1$ is family of
    equivariant line bundles on $\mathbb P^1$ of degree $\beta(L)$ and the
    $\mathbb C^*$ action on the fiber over $\infty$ is trivial. This determines
    the $\mathbb C^*$ equivariant structure on the fibers of $F_{\beta} \times
    \mathbb P^1 \to F_{\beta}$. Thus we get the
    above formula.
    In particular, for complete intersections in projective space, 
     $\widehat{\mathrm{ev}}_0^*(h) = \mathrm{ev}_\infty^*H + dz$.
  \end{remark}

  The operators $\Phi_{\beta}$, combined with some truncation operator, will give the so-called correction terms in the
  wall-crossing formula. Recall $\mathbb F = \mathbb Q(\lambda)$. For any
  $\mathbb F$-algebra $A$, define the formal residue operator 
  \[
    \mathrm{Res}: A[z, z^{-1}]\!][\rho, \rho^{-1}]\!] \longrightarrow A[\rho,
    \rho^{-1}]\!], 
  \]
  by taking the terms containing $z^{-1}$. Namely, 
  \[
    \quad \mathrm{Res}(\sum_{i,j} a_{ij}z^{i}\rho^{j}) = \sum_{j} 
    a_{(-1)j} \rho^j.
  \]
  And we define the truncation operator
  \begin{equation}
    \label{eq:truncation-def}
    [\ \cdot\ ]_{+}: A[z, z^{-1}]\!][\rho, \rho^{-1}]\!] \longrightarrow A[z][\rho, \rho^{-1}]\!]
  \end{equation}
  to be the truncation keeping non-negative powers of $z$. Namely, 
  \[
    \Big[\sum_{i,j}a_{ij}z^{i}\rho^{j}\Big]_{+} = \sum_{j} \sum_{i\geq 0}a_{ij}z^{i}\rho^{j}.
  \]
  Then for $a \in A$, $f(z, \rho) \in A[z, z^{-1} ]\!][\rho, \rho^{-1} ]\!]$,
  \begin{equation}
   \label{eq:res-and-plus} 
    \mathrm{Res} \left(\frac{f(z,\rho)}{z - a}\right) = ([ f(z,\rho) ]_{+})|_{z = a}.
  \end{equation}

  For $f \in A \otimes_{\mathbb F} \mathbb F(z, \rho)$, we define
  $\mathrm{Res}(f)$ and $[f]_+$
  via the sequence of inclusions 
  \[
    A \otimes_{\mathbb F} \mathbb F(z, \rho) \subset
    A \otimes_{\mathbb F}\mathbb F(z) [\rho,\rho^{-1}]\!]
    \subset A[z, z^{-1}]\!][\rho,
    \rho^{-1}]\!].
  \]
  In other words, we first expand it as a formal Laurent series in $\rho^{-1}$,
  then we expand the coefficients of each $\rho^{j}$ as a formal Laurent series in
  $z^{-1}$, and finally we collect all the terms containing $z^{-1}$. 
  \begin{example}
    We take $A = \mathbb F$ and consider $f = \frac{1}{z + \rho}$. There are two
    natural ways to expand
    \[
      f = \sum_{i = 0}^{\infty} (-1)^{i} z^{i} \rho^{-i-1} \in \mathbb
      F[z, z^{-1}]\!][\rho, \rho^{-1}]\!] \quad \text{or}
      \quad f =
      \sum_{j = 0}^{\infty} (-1)^{j} z^{-j - 1} \rho^{j} \in \mathbb
      F[\rho, \rho^{-1}]\!][z, z^{-1}]\!].
    \]
    In the two expansions, the coefficients of the $z^{-1}$ terms are $0$ and $1$,
    respectively. Note that they are not equal. In our definition, we use
    the first one and get $\mathrm{Res}(f) = 0$.
  \end{example}
  \begin{lemma}
    \label{lem:formal-residue}
    Let $A$ be an $\mathbb F$-algebra.
    \begin{enumerate}
    \item 
      The map $\mathrm{Res}:  A[z,z^{-1} ]\!] [\rho,\rho^{-1} ]\!] \to A
      [\rho,\rho^{-1} ]\!]$ is linear over $A[\rho
      , \rho^{-1}]\!]$. 
    \item 
      If $f \in A[z] [\rho,\rho^{-1} ]\!]$, then $\mathrm{Res}(f) = 0$.
    \item
      If $B$ is another $\mathbb F$-algebra and $\int: A \to B$ is any $\mathbb
      F$-linear map, then the induced diagram commutes
      \[
        \begin{tikzcd}
          A[z, z^{-1} ]\!] [\rho ,\rho^{-1}]\!] \ar[r, "{\int}"] \ar[d,
          "{\mathrm{Res}}"] & B[z, z^{-1}]\!] [\rho ,\rho^{-1}]\!] \ar[d,
          "{\mathrm{Res}}"]\\
          A[\rho ,\rho^{-1}] \ar[r, "{\int}"] & B[\rho,\rho^{-1} ] 
        \end{tikzcd}
      \]
    \end{enumerate}
  \end{lemma}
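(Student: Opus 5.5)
The plan is to work directly from the definition of $\mathrm{Res}$ as extraction of a coefficient, after writing a general element of $A[z,z^{-1}]\!][\rho,\rho^{-1}]\!]$ as $f=\sum_{j}f_j(z)\rho^{j}$. Here $j$ ranges over integers bounded above, and each $f_j(z)=\sum_i a_{ij}z^i\in A[z,z^{-1}]\!]$ is a Laurent series in $z^{-1}$. The operator $\mathrm{Res}$ sends $f$ to $\sum_j \mathrm{Res}_z(f_j)\rho^j$, where $\mathrm{Res}_z$ extracts the $z^{-1}$ coefficient. All three statements then reduce to coefficientwise facts about this extraction.

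For (2), suppose $f\in A[z][\rho,\rho^{-1}]\!]$. Then every $f_j$ is a polynomial in $z$, so it has no $z^{-1}$ term. Hence $a_{(-1)j}=0$ for all $j$, and $\mathrm{Res}(f)=0$.

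For (3), the map $\int$ is applied coefficientwise, sending $\sum a_{ij}z^i\rho^j$ to $\sum \int(a_{ij})z^i\rho^j$. The support conditions on the exponents are unchanged, so this is well defined. Both compositions in the diagram send $f$ to $\sum_j \int(a_{(-1)j})\rho^j$, so the diagram commutes. The target of the lower arrow should be read as $B[\rho,\rho^{-1}]\!]$, which is clearly the intended meaning.

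For (1), additivity is immediate. The real content is compatibility with multiplication by $g=\sum_k b_k\rho^k\in A[\rho,\rho^{-1}]\!]$, which is independent of $z$. The $\rho^{m}$ coefficient of $gf$ is $\sum_{k+j=m}b_kf_j(z)$. This is a finite sum, because both $k$ and $j$ are bounded above and their sum is fixed. Its $z^{-1}$ coefficient is therefore $\sum_{k+j=m}b_k a_{(-1)j}$, which is exactly the $\rho^m$ coefficient of $g\cdot\mathrm{Res}(f)$.

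The only point needing care is well-definedness: one must check that the products stay in the stated rings and that, for each fixed $\rho$-degree, only finitely many terms contribute. This is the step I would write out explicitly. Once it is in place, linearity follows by comparing coefficients.
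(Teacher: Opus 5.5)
Your proof is correct and is exactly the coefficientwise verification from the definitions that the paper intends when it says these facts are ``straightforward from definition.'' The finiteness check for the $\rho$-convolution in (1) is the only step with real content, and you handle it correctly; you are also right that the bottom row of the diagram in (3) should read $A[\rho,\rho^{-1}]\!]\to B[\rho,\rho^{-1}]\!]$.
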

  \begin{proof}
    These are straightforward from definition.
  \end{proof}

  In the following Proposition and its proof, we will take 
  $A$ to be the $T$-equivariant cohomology of various spaces.
  \begin{proposition}[Basic wall-crossing formula]
    \label{prop:basic-wc}
    For any $\gamma_1 ,\ldots, \gamma_m \in H^*_T(Y, \mathbb Q)$,  $\eta_1 ,\ldots,
    \eta_n \in H^*_T([V/G], \mathbb Q)$, suppose $2 g - 2 + m \geq 0$ and $(2 g
    - 2 + m, n, \beta) \neq (0,0,0)$, we have
    \begin{equation}
      \label{eq:basic-wc}
      \begin{aligned}
 &   \langle \gamma_1 \psi^{a_1}, \ldots,  \gamma_m \psi^{a_m}\mid
        \eta_1\psi^{b_1} ,\ldots, \eta_n \psi^{b_n}
                                 \rangle^{0^+, \mathrm{tw}}_{g,m|n, \beta} \\
        = & \sum_{\beta^\prime, J}
            \langle \gamma_1 \psi^{a_1}, \ldots,  \gamma_m \psi^{a_m},
            [
            (-z)^{b_J} z^{1 - |J|}
            \textstyle \Phi_{\beta^\prime}(\prod_{j\in
            J}\eta_j)
            ]_{+} \big|_{z = -\psi}
           \mid \{ \eta_j \psi^{b_j}\}_{j\not \in J}
            \rangle^{0^+, \mathrm{tw}}_{g,m + 1|n - |J|, \beta - \beta^\prime}
      \end{aligned}
      \end{equation}
    where $\overline{\eta}_1$ is the restriction of
  $\eta_1$ to $Y$, and the summation is over all pairs $(\beta^\prime, J)$ such that
    \begin{itemize}
    \item
      both $\beta^\prime, \beta - \beta^\prime$ are effective curve classes;
    \item
      $1 \in J \subset \{1 ,\ldots, n\}$.
    \end{itemize}
    For such a $J$, we set $b_J = \sum_{j\in J}b_j$, and $|J|$
    is the cardinality of $J$;
  \end{proposition}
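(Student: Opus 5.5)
We prove \eqref{eq:basic-wc} by $\mathbb C^*$-localization on the master space used for heavy-light wall-crossing in \cite{zhou2020higher-with-pg-no, pinharry2020weighted}, which interpolates between making $y_1$ light and making it heavy. Concretely, let $M$ be the moduli of quasimaps with $m$ heavy markings, $n$ light markings and $y_1$ marked, together with a calibration: a line $\mathbb L$ on which $\mathbb C^*$ acts by scaling, and a section $v$ of the tangent line at $y_1$ twisted by $\mathbb L$, subject to the stability of \cite{pinharry2020weighted}. $M$ is proper and carries a $\mathbb C^*\times T$-equivariant perfect obstruction theory. Since $\mathbb E$ is pulled back from $[V/G]$, $R\pi_*u^*\mathbb E$ is defined on $M$, so the twisted class $e_{T\times\mathbb C^*_\rho}(R\pi_*u^*\mathbb E)\cap[M]^{\mathrm{vir}}$ makes sense after $\rho^{-1}$-adic completion, as in Remark~\ref{eq:polynomiality-rho-expansion}. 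The construction itself is exactly as in the untwisted, non-equivariant case. The new inputs are the extra $T\times\mathbb C^*_\rho$-equivariance, which commutes with the master-space $\mathbb C^*$, and the descendants $\psi_{y_j}^{b_j}$, which extend to $M$.

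We pick an equivariant lift of the integrand $\prod\mathrm{ev}^*\gamma_i\psi^{a_i}\prod\widehat{\mathrm{ev}}^*\eta_j\psi^{b_j}$, multiplied by one extra power of the equivariant weight $t$ (the first Chern class of $\mathbb L$). Its integral over $M$ is polynomial in $t$, so its $t^{-1}$ coefficient vanishes. By virtual localization this gives: (contribution of the fixed locus $v=0$) $+$ (contribution of the locus $v=\infty$) $+\sum$ (the mixed loci) $=0$. These three contributions are handled as follows.

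\emph{Boundary loci.} The two boundary fixed loci are $Q_{g,m|n}$ with $y_1$ light and $Q_{g,m+1|n-1}$ with $y_1$ heavy. Each has normal bundle of weight $\pm t$, so after taking residues they produce the two terms on the left-hand side with opposite signs. This yields the $\beta'=0$, $J=\{1\}$ term, since $[z\,\Phi_0(\eta_1)(-z)^{b_1}z^{0}]_+|_{z=-\psi}$ equals $\overline\eta_1\psi^{b_1}$; here $\Phi_0$ is the Kirwan map.

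\emph{Mixed loci.} These are quasimaps glued at a node to a degree-$\beta'$ rational tail carrying the light markings in $J\ni1$, where $y_1$ collides with a base point. That tail lies in $F_{\beta'}$, with the light markings concentrated at $0$. Such a locus is a fibered product of $Q_{g,m+1|n-|J|,\beta-\beta'}$ with $F_{\beta'}$. Its virtual normal bundle combines $N^{\mathrm{vir}}_{F_{\beta'}}$ with the smoothing of the node, giving $1/(t-\psi-\ldots)$. The twisting class splits by normalization exact sequence, and dividing by $e(\mathbb E)$ at the node accounts for the $1/e(\mathbb E)$ in $\Phi_{\beta'}$. The $\psi_{y_j}$ for $j\in J$ restrict to the weight $-z$, producing $(-z)^{b_J}$. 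The factor $z^{1-|J|}$ comes from the normalization of the collision data in \cite{ciocan-fontanine2016}. The $t^{-1}$ residue then has the form of \eqref{eq:res-and-plus}, which converts $1/(z-(-\psi))$ into $[\cdot]_+|_{z=-\psi}$. Lemma~\ref{lem:formal-residue}(3) lets us commute the residue with integration over the other factor.

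The main obstacle is the equivariant twisted residue bookkeeping. Because the invariants live in $\mathbb Q[\lambda][\rho,\rho^{-1}]\!]$, every expansion must be taken in the order fixed before Lemma~\ref{lem:formal-residue}: first in $\rho^{-1}$, then in $z^{-1}$. We must check that the localization formula, computed in this completed ring, still splits into the $z\mapsto-\psi$ substitution form; in particular, terms like $1/(z+\rho)$ must contribute no spurious residues. We handle this by working throughout in $A[z,z^{-1}]\!][\rho,\rho^{-1}]\!]$, using Lemma~\ref{lem:formal-residue}(2) to kill polynomial contributions, and then checking that $\Phi_{\beta'}$ as defined is exactly the fixed-part factor that appears.
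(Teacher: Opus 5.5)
Your architecture matches the paper's. You use the master space with $v\in T_{y_1}C\cup\{\infty\}$, the fixed loci $F_\infty\cong Q_{g,m|n}(Y,\beta)$, $F_0\cong Q_{g,m+1|n-1}(Y,\beta)$ and $F_{\beta',J}\cong Q_{g,m+1|n-|J|}(Y,\beta-\beta')\times_Y F_{\beta'}$, the splitting of the twisting class at the node (giving the $1/e(\mathbb E)$ in $\Phi_{\beta'}$), and the vanishing of a residue. However, the residue bookkeeping, which you yourself flag as the crux, is off by one power of the equivariant parameter.

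Multiplying $\alpha$ by the weight and extracting the coefficient of its inverse is the same as extracting the $z^{-2}$-coefficient of the original localization sum. On $F_\infty$ the contribution is $\alpha/(z+\psi_{y_1})$, whose $z^{-2}$-coefficient is $-\psi_{y_1}\alpha$, not $\alpha$. On $F_0$ you get $+\psi_{y_1}\alpha$. On a mixed locus with contribution $G(z)/(-z-\psi_\bullet)$ you get $-[zG(z)]_+|_{z=-\psi}$, where $\psi_{y_1}$ restricts to $-z$. Carried through consistently, your computation proves \eqref{eq:basic-wc} with $b_1$ replaced by $b_1+1$, so the case $b_1=0$, which is the one used later for the one-point formulas, is never obtained.

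The same slip appears in your check of the $(0,\{1\})$ term: $[z\,\Phi_0(\eta_1)(-z)^{b_1}]_+|_{z=-\psi}=-\overline\eta_1\psi^{b_1+1}$, not $\overline\eta_1\psi^{b_1}$. The statement has no prefactor $z$; that factor belongs to $\mathbb I=z\Phi(e^{\mathbf s/z})$, not to $\Phi$. The fix is to integrate $\alpha$ itself. Then $\int_{[\mathcal M]^{\mathrm{vir,tw}}}\alpha\in\mathbb F[z][\rho,\rho^{-1}]\!]$ has zero residue, and:
\begin{itemize}
\item $F_\infty$ contributes $\mathrm{Res}\bigl(\alpha/(z+\psi_{y_1})\bigr)$, which is the left-hand side;
\item $F_0$ contributes minus the $(0,\{1\})$ term;
\item each $F_{\beta',J}$ contributes minus the $(\beta',J)$ term, via \eqref{eq:res-and-plus}.
\end{itemize}
Your node factor $1/(t-\psi)$ and the later $1/(z-(-\psi))$ also have inconsistent signs; in the paper's convention it is $1/(-z-\psi_\bullet)$.

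A second, smaller gap is the step ``by virtual localization this gives \ldots''. This is exactly where the twisted setting needs an argument, and your last paragraph names the difficulty without resolving it. The class $e_{T\times\mathbb C^*_\rho}(R\pi_*u^*\mathbb E)$ is only a $\rho^{-1}$-Laurent series, not a class to which virtual localization applies directly. The paper handles this in three steps:
\begin{itemize}
\item write $[\mathcal M]^{\mathrm{vir,tw}}=\sum_{i\le K}e_i\rho^{-i}[\mathcal M]^{\mathrm{vir}}$ with honest classes $e_i$;
\item apply $T\times\mathbb C^*_z$ virtual localization to each $\int e_i\alpha$ over the $T$-fixed components $F_{\star,j}$;
\item regroup these into the $F_\star$ by a second localization on each $F_\star$, where $\mathbb C^*_z$ acts trivially.
\end{itemize}
This yields \eqref{eq:localization} in $\mathbb F(z)[\rho,\rho^{-1}]\!]$. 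That is what justifies expanding first in $\rho^{-1}$ and then in $z^{-1}$ and applying Lemma~\ref{lem:formal-residue}.

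Finally, the factor $z^{1-|J|}$ is not a normalization taken from \cite{ciocan-fontanine2016}. It is the normal contribution of moving each $y_j$, $j\in J\setminus\{1\}$, off $0\in C'$; each such direction has weight $z$.
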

  \begin{proof}
    The proof is parallel to many previous works and we only sketch it. We first
    explain the differences in equivariant twisted theory, and then we outline 
    the computation.

    The key is to construct a master space $\mathcal M$ which is proper with a
    perfect obstruction theory, 
    and has
    a $\mathbb C_{z}^*$-action with fixed loci
    \[
      F_{\star}, \quad \text{where }\star = 0, \infty, \text{ or $(\beta^\prime, J)$ as in
        the proposition.} 
    \]
    The master space is constructed by merging the stability conditions on both
    sides of the wall and introducing an additional
    \[
      v \in T_{y_1}C \cup \{\infty\},
    \]
    where $T_{y_1}C$ is the tangent space to the domain curve at the marking $y_1$,
    which naturally form a line bundle on the moduli.
    It was first introduced in \cite{zhou2020higher-with-pg-no} and we refer the
    reader to \cite{pinharry2020weighted} for
    the version in the context of quasimaps.

    Before describing each $F_{\star}$ in details, we prove a general
    localization formula.
    The $\mathbb C^*_z = \mathbb C^*$ acts on $\mathcal M$ by scaling $v$. Hence it
    commutes with the $T$-action on the target space and we get a $T\times
    \mathbb C_z^*$-action on $\mathcal M$.
    Moreover, $\mathcal M$ has an equivariant perfect obstruction theory, inducing virtual
    cycles $[\mathcal M]^{\mathrm{vir}}$ and $[F_{\star}]^{\mathrm{vir}}$. We
    also define the twisted virtual cycles $[\mathcal M]^{\mathrm{vir, tw}},
    [F_{\star}]^{\mathrm{vir, tw}}$ similar to \eqref{eq:twisted-vir}.
    Now take any $\alpha \in H^*_{T\times \mathbb C_{z}^*}(\mathcal M, \mathbb Q)$, and we
    claim that
    \begin{equation}
      \label{eq:localization}
      \int_{[\mathcal M]^{\mathrm{vir, tw}}} \alpha =
      \sum_{\star = 0, \infty, (\beta^\prime, J)}
      \int_{[F_{\star}]^{\mathrm{vir, tw}}} \frac{\alpha|_{F_{\star}}}{e_{T\times \mathbb
          C_z^*}(N^{\mathrm{vir}}_{F_{\star}/\mathcal M})}
      \quad \text{ in } \mathbb F(z)[\rho, \rho^{-1}]\!].
    \end{equation}
    To see this, we first write
    \begin{equation}
      \label{eq:twisted-cycle-decomposition}
      [\mathcal M]^{\mathrm{vir, tw}} = \sum_{i = -\infty}^{K} e_i \rho^{-i} [\mathcal M]^{\mathrm{vir}},
      \text{ for some }K \in \mathbb Z \text{ and } e_i \in H^*_{T\times \mathbb C_z^*}(\mathcal M, \mathbb Q).
    \end{equation}
    Then applying virtual localization to each $\int_{[\mathcal
      M]^{\mathrm{vir}}}e_i\alpha$, with respect to the $T \times \mathbb C_z^*$-action,
    we get
    \begin{equation}
      \label{eq:localization-1}
      \begin{aligned}
        \int_{[\mathcal M]^{\mathrm{vir, tw}}} \alpha
        = &  \sum_{i = -\infty}^K \big(\int_{[\mathcal M]^{\mathrm{vir}}} e_i \alpha) \rho^{i} \\
        = & \sum_{i = -\infty}^K \big(\sum_{\star = 0, \infty, (\beta^\prime,
            J)} \sum_{j} \int_{[F_{\star, j}]^{\mathrm{vir}}}
            \frac{(e_i\alpha)|_{F_{\star, j}}}{e_{T\times \mathbb C_z^*}(N_{F_{\star,
            j}/\mathcal M}^{\mathrm{vir}})}) \rho^{i},
      \end{aligned}
    \end{equation}
    where the $F_{\star, j}$'s are the connected components of the $T$-fixed locus
    $(F_{\star})^{T}$. Note that 
    \[
      [F_{\star}]^{\mathrm{vir, tw}} = \sum_{i = -\infty}^{K} (e_i|_{F_\star})
      \rho^{-i}[F_{\star}]^{\mathrm{vir}} \quad \text{and} \quad 
     e_{T\times \mathbb C_z^*}(N_{F_{\star,
          j}/ \mathcal M}^{\mathrm{vir}})= e_{T\times
        \mathbb C_z^*}(N_{F_{\star, j}/F_{\star}}^{\mathrm{vir}}) \Big(e_{T\times
        \mathbb C_z^*}(N^{\mathrm{vir}}_{F_\star/\mathcal M})\Big) \Big|_{F_{\star, j}} .
    \]
    Applying the virtual localization formula to the $T\times \mathbb
    C^*_z$-action on each $F_\star$, where the $\mathbb C^*_z$ factor acts trivially, we get
    \begin{equation}
      \label{eq:localization-2}
      \int_{[F_\star]^{\mathrm{vir}}} \frac{(e_i\alpha)|_{F_{\star}}}{e_{T\times \mathbb
          C_z^*}(N^{\mathrm{vir}}_{F_\star /\mathcal M})} = \sum_{j} \int_{[F_{\star, j}]^{\mathrm{vir}}}
      \frac{(e_i\alpha)|_{F_{\star, j}}}{e_{T\times \mathbb
          C_z^*}(N_{F_{\star, j}/ \mathcal M}^{\mathrm{vir}})}.
    \end{equation}
    Combining \eqref{eq:localization-1} and \eqref{eq:localization-2}, we get \eqref{eq:localization}.

    Now in \eqref{eq:localization} we take 
    \[
      \alpha =
      \prod_{i=1}^{m} \mathrm{ev}^*_{x_i}\gamma_i \psi_{x_i}^{a_i} \cdot 
      \prod_{j = 1}^{n}
      \widehat{\mathrm{ev}}^*_{y_j}\eta_j \psi_{y_j}^{b_j}.
    \]
    By \eqref{eq:twisted-cycle-decomposition}, the left hand side of
    \eqref{eq:localization} lies in $\mathbb F[z][\rho, \rho^{-1}]\!]$. Applying
    the formal residue operator to both sides of \eqref{eq:localization}, by
    Lemma~\ref{lem:formal-residue} we get
    \begin{equation}
      \label{eq:sum-of-res}
      0 = \sum_{\star = 0, \infty, (\beta^\prime, J)}
      \int_{[F_{\star}]^{\mathrm{vir, tw}}} \mathrm{Res}\Big(\frac{\alpha|_{F_{\star}}}{e_{T\times \mathbb
          C^*_{z}}(N^{\mathrm{vir}}_{F_{\star}/\mathcal M})}\Big) \in \mathbb F[\rho, \rho^{-1}]\!].
    \end{equation}
    It remains to compute the residue on each $F_{\star}$.
    Recall that our convention is that the standard representation of $\mathbb C^*$ has
    first Chern class $- z$. This is the same as \cite{zhou2022quasimap}, but different from
    \cite{zhou2020higher-with-pg-no,pinharry2020weighted}.

    The computation is almost parallel to those in
    \cite{zhou2020higher-with-pg-no, zhou2022quasimap, pinharry2020weighted}. We
    sketch the key steps of the computation and highlight the differences.
    \begin{itemize}
    \item
      $F_{0} \cong Q_{g, m + 1 \mid n - 1}(Y, \beta)$ is the locus where $v =
      0$, and we have
      \[
        \frac{1}{e_{T\times \mathbb C_z^*}(N^{\mathrm{vir}}_{F_{0}/ \mathcal M})} = \frac{1}{- z - \psi_{y_1}}.
      \]
      And $\alpha$ restricts to
      \[
        \prod_{i=1}^{m} \mathrm{ev}^*_{x_i}\gamma_i \psi_{x_i}^{a_i} \cdot 
        \prod_{j = 1}^{n}
        \widehat{\mathrm{ev}}^*_{y_j}\eta_j \psi_{y_j}^{b_j}.
      \]
      Note that $y_1$ is a heavy marking here for $Q_{g, m + 1 \mid n - 1}(Y,
      \beta)$ and we use $\widehat{\mathrm{ev}}_{y_1}$ to denote the
      evaluation map at $y_1$ composed with the inclusion map $X \to [V/G]$.

      The equivariant intergration is $\mathbb F$-linear, taking $A =
      H^*_{T}(F_0, \mathbb Q)$, we have $\alpha|_{F_0} \in A$. By
      Lemma~\ref{lem:formal-residue} and \eqref{eq:res-and-plus},
      \[
        \begin{aligned}
          \mathrm{Res}\Big(\int_{[F_0]^{\mathrm{vir, tw}}} \frac{\alpha|_{F_0}}{-z - \psi_{y_1}}\Big)
          = & \int_{[F_0]^{\mathrm{vir, tw}}}
              \mathrm{Res}\big(\frac{\alpha|_{F_0}}{-z - \psi_{y_1}}\big) \\
          = & - \int_{[F_0]^{\mathrm{vir, tw}}} \alpha|_{F_0} \\
          = & - \int_{[Q_{g, m + 1 \mid n - 1}(Y, \beta)]^{\mathrm{vir, tw}}} 
              \prod_{i=1}^{m} \mathrm{ev}^*_{x_i}\gamma_i \psi_{x_i}^{a_i} \cdot 
        \prod_{j = 1}^{n}
        \widehat{\mathrm{ev}}^*_{y_j}\eta_j \psi_{y_j}^{b_j}.
        \end{aligned}
      \]
      This gives $(\beta^\prime, J) = (0, \{1\})$ term, up to a sign, in the wall-crossing
      formula \eqref{eq:basic-wc}.
    \item
      $F_{\infty} \cong Q_{g, m \mid n }(Y, \beta)$ is the locus where $v =
      \infty$, and we have
      \[
        \frac{1}{e_{T\times \mathbb C^*_z}(N^{\mathrm{vir}}_{F_{\infty}/
            \mathcal M})} = \frac{1}{ z + \psi_{y_1}}.
      \]
      And $\alpha$ restricts to
      \[
        \prod_{i=1}^{m} \mathrm{ev}^*_{x_i}\gamma_i \psi_{x_i}^{a_i} \cdot 
        \prod_{j = 1}^{n}
        \widehat{\mathrm{ev}}^*_{y_j}\eta_j \psi_{y_j}^{b_j}.
      \]
      And similarly
      \[
        \begin{aligned}
          \mathrm{Res}\Big(\int_{[F_\infty]^{\mathrm{vir, tw}}} \frac{\alpha|_{F_\infty}}{z + \psi_{y_1}}\Big)
          = & \int_{[Q_{g, m \mid n }(Y, \beta)]^{\mathrm{vir, tw}}}
              \prod_{i=1}^{m} \mathrm{ev}^*_{x_i}\gamma_i \psi_{x_i}^{a_i} \cdot 
              \prod_{j = 1}^{n}
              \widehat{\mathrm{ev}}^*_{y_j}\eta_j \psi_{y_j}^{b_j},
        \end{aligned}
      \]
      giving the term on the left hand side in the wall-crossing
      formula \eqref{eq:basic-wc}.
    \item
      $F_{\beta^\prime, J} \cong Q_{g, m + 1\mid n - |J|}(Y, \beta
      - \beta^\prime) \times_{Y} F_{\beta^{\prime}}$ is the locus where
      $v \neq 0, \infty$ and $C$ has an irreducible component $C^{\prime}
      \cong \mathbb P^1$, such that under this isomorphism, $\infty$ is a node;
      all $y_j$ for $j\in J$ are located at $0$; $C^\prime$ contains no other
      markings or nodes of $C$; the restriction of the quasimap to $C^{\prime}$
      is in $F_{\beta^{\prime}}$, the distinguished fixed locus in the
      loop space $Q(\mathbb P^1, Y, \beta)$ defined in \eqref{eq:loop-space}.

      For $Q_{g, m + 1\mid n - |J|}(Y, \beta - \beta^\prime)$, the $n - |J|$
      light markings are those $y_j$ for $j\not \in J$.
      The $(m+1)$-th marking is a new heavy marking, and we denote it by
      $\bullet$.
      The fiber product is
      formed using the maps $\mathrm{ev}_{\bullet}$ and $\mathrm{ev}_{\infty}$. 

      Then $\alpha$ restricts to
      \[
        \textstyle 
        \prod_{i=1}^{m} \mathrm{ev}^*_{x_i}\gamma_i \psi_{x_i}^{a_i}
        \cdot
        \prod_{j\not \in J} \widehat{\mathrm{ev}}^*_{y_j} \eta_j \psi_{y_j}^{b_j}
        \boxtimes
        \mathrm{ev}_0^*(\prod_{j\in J} (-z)^{b_j}\eta_j).
      \]
      And
      \[
        \frac{1}{e_{\mathbb C^*}(N^{\mathrm{vir}}_{F_{\beta^\prime, J}/ \mathcal
          M})} =
        \frac{1}{- z - \psi_{\bullet}}
        \boxtimes \frac{z^{1-|J|}}{e_{T\times \mathbb
            C_z^*}(N^{\mathrm{vir}}_{F_{\beta^\prime}/ Q(\mathbb P^1, Y, \beta^{\prime})})},
      \]
      The term $- z - \psi_{\bullet}$ comes from the space of infinitesimal smoothing of the
      node on $C^\prime$, and $z^{1-{|J|}}$ coming from the deformation of $y_j$
      away from $y_1$, for $j\in J\setminus \{1\}$.
      Thus
      \[
        \begin{aligned}
          & \mathrm{Res}\Big(\int_{[F_{\beta^{\prime}, J}]^{\mathrm{vir, tw}}}
          \frac{\alpha|_{F_{\beta^{\prime}, J}}}{- z - \psi_{\bullet}}
          \boxtimes \frac{z^{1-|J|}}{e_{T\times \mathbb
            C_z^*}(N^{\mathrm{vir}}_{F_{\beta^\prime}/ Q(\mathbb P^1, Y, \beta^{\prime})})} \Big) \\
          = & - \int_{[Q_{g, m + 1\mid n - |J|}(Y, \beta
              - \beta^\prime)]^{\mathrm{vir, tw}}}
              \prod_{i=1}^{m} \mathrm{ev}^*_{x_i}\gamma_i \psi_{x_i}^{a_i}
              \cdot
              \prod_{j\not \in J} \widehat{\mathrm{ev}}^*_{y_j} \eta_j
              \psi_{y_j}^{b_j}  \\
          &\cdot
            \mathrm{Res}\Big(
            \frac{\prod_{j\in J}
            (-z)^{b_j}\mathrm{ev}_{\bullet}^*\Phi_{\beta^{\prime}} \big(\eta_j\big)
            }{z^{|J| - 1} (z + \psi_{\bullet})}
            \Big)
        \end{aligned}
      \]
      This gives $(\beta^\prime, J) \neq (0, \{1\})$ term on the right hand side, up to a sign, in the wall-crossing
      formula \eqref{eq:basic-wc}.
    \end{itemize}

    Putting all those results into \eqref{eq:sum-of-res}, we get the desired
    wall-crossing formula.
  \end{proof}

  We now put the wall-crossing formula in a more succinct form. We extend
  $\Phi_{\beta}$ linearly in $z, \rho$ and $q$, and set 
  \[
    \Phi =  \sum_{\beta} \Phi_\beta q^{\beta}: H_{T}^*([V/G], \mathbb Q)[z, z^{-1}]\!] [\rho, \rho^{-1}]\!] \longrightarrow H^*_T(Y, \mathbb Q)
    [z, z^{-1}]\!] [\rho, \rho^{-1}]\!]
  \]
  and $q$ is the
  Novikov variable. Finally set
  \[
    \Phi^{+} : H_{T}^*([V/G], \mathbb Q)[z, z^{-1}]\!] [\rho, \rho^{-1}]\!] \longrightarrow H^*_T(Y, \mathbb Q)
    [z] [\rho, \rho^{-1}]\!]
  \]
  to be the truncation of $\Phi$ keeping
  the $z^{\geq 0}$-terms, defined in \eqref{eq:truncation-def}.

  For $\mathbf t_1(z) ,\ldots, \mathbf t_m(z) \in H^*_T(Y, \mathbb Q)[z]$, 
  $\mathbf s_1(z) ,\ldots, \mathbf s_n(z) \in H^*_T([V/G], \mathbb Q)[z]$,
  we set
  \[
    \begin{aligned}
      \langle \mathbf t_1(\psi) ,\ldots, \mathbf t_m(\psi) \mid
      \mathbf s_1(\psi) ,\ldots, &  \mathbf s_n(\psi)
                                \rangle^{0^+, \mathrm{tw}}_{g,m|n} \\ = \sum_{\beta} q^\beta
                              & \langle \mathbf t_1(\psi) ,\ldots, \mathbf t_m(\psi) \mid
                                \mathbf s_1(\psi) ,\ldots,  \mathbf s_n(\psi)
                                \rangle^{0^+, \mathrm{tw}}_{g,m|n, \beta} \quad .
    \end{aligned}
  \]
  \begin{corollary}
    For $\mathbf s_1(z) ,\ldots, \mathbf s_n(z) \in H^*_T([V/G], \mathbb Q)[z]$,
    we have
    \[
      \langle - |  \mathbf{s}_1(\psi),\cdots, \mathbf{s}_n(\psi) \rangle^{0^+, \mathrm{tw}}_{g,m|n}
      = \sum_{k = 1}^{n}\sum_{\vec B} \frac{1}{k!} \langle - ,
      \Phi^+(\tfrac{\mathbf{s}_{B_1}(-z) 
      }{z^{|B_1|-1}})\big |_{z = -\psi},\cdots,
      \Phi^+(\tfrac{\mathbf{s}_{B_k}(-z) }{z^{|B_k|-1}})\big |_{z = - \psi}
      \rangle^{0+, \mathrm{tw}}_{g,m+k
        \mid 0}\quad ,
    \]
    where ``$-$'' denotes arbitrary insertions with descendants at the first $m$
    markings, the summation is over all partitions $\vec B = (B_1 ,\ldots,
    B_k)$ of the set $\{1 ,\ldots, n\}$, $|B_i| > 0$ is the size the $B_i$ and $\mathbf
    s_{B_i} (-z)= \prod_{j\in B_{i}} \mathbf s_j(-z)$ for $i = 1 ,\ldots, k$.
  \end{corollary}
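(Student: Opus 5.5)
The plan is to prove the corollary by induction on the number $n$ of light markings, repeatedly applying Proposition~\ref{prop:basic-wc} to turn light markings into heavy ones.

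\textbf{Multilinearity.} Both sides are multilinear in the $\mathbf s_j$ and $\mathbb Q[\lambda]$-linear. It therefore suffices to take monomials $\mathbf s_j(z)=\eta_j z^{b_j}$. For each fixed $\beta$ we then compare coefficients of $q^\beta$, summing over splittings $\beta=\beta_0+\beta'$.

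\textbf{One application of Proposition~\ref{prop:basic-wc}.} Apply the proposition to the first light marking $y_1$. The factor $(-z)^{b_J}\prod_{j\in J}\eta_j$ is exactly $\mathbf s_J(-z)$. Summing over $\beta'$ with $q^{\beta'}$ turns $\sum_{\beta'}q^{\beta'}[z^{1-|J|}\Phi_{\beta'}(\cdot)]_+$ into $\Phi^+\bigl(\mathbf s_J(-z)/z^{|J|-1}\bigr)$. So one step gives
\[
\langle -\mid \mathbf s_1,\dots,\mathbf s_n\rangle_{g,m|n}=\sum_{1\in J}\Big\langle -,\ \Phi^+\big(\tfrac{\mathbf s_J(-z)}{z^{|J|-1}}\big)\big|_{z=-\psi}\ \Big|\ \{\mathbf s_j\}_{j\notin J}\Big\rangle_{g,m+1|n-|J|}.
\]
The new heavy insertion is polynomial in $\psi$, so it is a legitimate descendant insertion. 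By the ``$-$'' convention it can be absorbed into the arbitrary heavy insertions.

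\textbf{Induction.} Apply the induction hypothesis to the remaining $n-|J|$ light markings, with $m+1$ heavy markings. This produces a sum over ordered partitions $(B_2,\dots,B_k)$ of $\{1,\dots,n\}\setminus J$, weighted by $1/(k-1)!$.

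\textbf{Combinatorial matching.} Set $B_1=J$. Each unordered set partition of $\{1,\dots,n\}$ then appears exactly once on the inductive side: $J$ is forced to be the block containing $1$, and the remaining blocks come from the inner sum. On the stated right-hand side, the sum over ordered $k$-tuples with $1/k!$ counts each unordered partition with total weight $1$, by symmetry of the correlator in the heavy markings. The inner $1/(k-1)!$ similarly counts each remaining unordered partition once. So the two sides agree after matching both to a sum over unordered partitions. I would state this symmetrization explicitly so that the factorial bookkeeping is transparent.

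\textbf{Stability conditions (main obstacle).} The real content is verifying the hypotheses $2g-2+m\ge 0$ and $(2g-2+m,n,\beta)\neq(0,0,0)$ at every step. After the first application $m$ increases, so these only improve. The delicate cases are the initial ones, $g=0$ with $m<2$ and $g=1$ with $m=0$. There I expect the statement to be meant under the same standing hypotheses as Proposition~\ref{prop:basic-wc}, and I would state that restriction explicitly. For the base case $n=0$ (or the last step), the formula is tautological with $k=0$ excluded. I would check that the sum starts at $k=1$ for $n\ge1$, which holds automatically.
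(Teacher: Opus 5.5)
Your proposal is correct and follows the paper's approach: the paper's proof says only that the corollary follows by inductively applying Proposition~\ref{prop:basic-wc}, and your multilinear reduction, the resummation $\sum_{\beta'}q^{\beta'}\bigl[(-z)^{b_J}z^{1-|J|}\Phi_{\beta'}(\prod_{j\in J}\eta_j)\bigr]_+=\Phi^+\bigl(\mathbf s_J(-z)/z^{|J|-1}\bigr)$, and the ``block containing $1$'' bookkeeping of the $1/k!$ factors are exactly the details that one line leaves implicit. Two small corrections: $g=1$, $m=0$ is not a delicate case, since with $n\ge 1$ it already satisfies the hypotheses of Proposition~\ref{prop:basic-wc}, so the only standing assumption needed is $2g-2+m\ge 0$ (it is genuinely needed: e.g.\ for $g=0$, $m=0$, $n=3$ the two sides differ in degree $0$); and the $n=0$ base case is tautological only if the empty partition ($k=0$) is included rather than excluded, or equivalently the recursion simply stops when $J=\{1,\dots,n\}$.
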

  \begin{proof}
    This follows immediately from inductively applying Proposition~\ref{prop:basic-wc}.
  \end{proof}

  We set
  \[
    \mathbb I(\mathbf s,q,z) = z\Phi(e^{\mathbf s/z}) = \frac{z}{e_{T\times \mathbb C^*_{\rho}}(\mathbb E)} \sum_{\beta} q^{\beta}
    (\mathrm{ev}_{\infty})_*
    \big(
    \frac{\exp(\frac{\widehat{\mathrm{ev}}_{0}^*(\mathbf s)}{z}) \cap
      [F_\beta]^{\mathrm{vir, tw}}}{e_{\mathbb
        C^*}(N^{\mathrm{vir}}_{F_{\beta/ Q(\mathbb P^1, Y, \beta)}})}
    \big). 
  \]
  This is well-defined as a formal function near the origin of the infinite
  dimensional space $H^*_T([V/G], \mathbb Q)[z]$.

  \begin{proposition}
    \label{prop:wall-crossing-big-I-form}
    As a formal functions in $\mathbf s(z)$ defined near the origin of
    $H^*_T([V/G], \mathbb Q)[z]$, we have 
    \[
      \sum_{n\geq 0} \frac{1}{n!}	\langle  - |  \mathbf s(\psi) ,\ldots, \mathbf s(\psi) \rangle^{0^+,\mathrm{tw}}_{g,m|n}
      = \sum_{n\geq 0} \frac{1}{n!} \langle - ,  \mu(\mathbf s,-\psi) ,\ldots,  \mu(\mathbf s,-\psi)\rangle^{0^+, \mathrm{tw}}_{g,m+n}
    \]
    where $ \mu(\mathbf s,z):=[ \mathbb I(\mathbf s(-z),q,z)- \mathbb I({\mathbf 0},q,z) ]_{+}  =
    \mathbf s(z)+O(q)$.
  \end{proposition}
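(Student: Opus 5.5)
The plan is to derive the statement from the preceding Corollary by pure combinatorics, applied with $\mathbf s_1=\cdots=\mathbf s_n=\mathbf s$. For each $n$, the Corollary turns the $n$-light-point invariant into a sum over set partitions $\vec B=(B_1,\ldots,B_k)$ of $\{1,\ldots,n\}$. Each block contributes one heavy insertion $\Phi^+\big(\mathbf s(-z)^{|B_i|}/z^{|B_i|-1}\big)|_{z=-\psi}$, and there is an overall factor $1/k!$.

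First I regroup by block sizes. Multiply by $1/n!$ and sum over $n$. The number of ordered partitions of $\{1,\ldots,n\}$ into blocks of prescribed sizes $(\ell_1,\ldots,\ell_k)$, with $\sum \ell_i=n$, is $n!/\prod\ell_i!$. The $1/n!$ cancels, and since the invariant is symmetric and multilinear in the new heavy insertions we get
\[
\sum_{n}\frac1{n!}\langle -\mid \mathbf s,\ldots,\mathbf s\rangle_{g,m|n}=\sum_{k\ge 0}\frac1{k!}\Big\langle -,\ \nu(-\psi),\ldots,\nu(-\psi)\Big\rangle_{g,m+k},\qquad \nu(z)=\sum_{\ell\ge1}\frac{1}{\ell!}\,\Phi^+\Big(\frac{\mathbf s(-z)^\ell}{z^{\ell-1}}\Big).
\]
The $n=0$ term matches the $k=0$ term.

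Next I identify $\nu$ with $\mu$. Because $\Phi$ and the truncation $[\cdot]_+$ are linear, $\nu(z)=\big[z\Phi\big(e^{\mathbf s(-z)/z}-1\big)\big]_+$. By definition $\mathbb I(\mathbf s(-z),q,z)=z\Phi(e^{\mathbf s(-z)/z})$ and $\mathbb I(\mathbf 0,q,z)=z\Phi(1)$, so $\nu=\mu$. One subtlety needs care: in the definition of $\mathbb I$, the class $\widehat{\mathrm{ev}}_0^*$ acts on the $z$-dependent $\mathbf s$. This is consistent with the Corollary, where the product $\mathbf s_{B}(-z)$ is pulled back by $\widehat{\mathrm{ev}}_0$ and $\Phi_\beta$ is extended $z$-linearly. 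The statement $\mu=\mathbf s(z)+O(q)$ follows since $\Phi_0$ is the Kirwan map, which is a ring homomorphism. Hence $z\Phi_0(e^{\mathbf s/z}-1)=z(e^{\bar{\mathbf s}/z}-1)=\bar{\mathbf s}+\bar{\mathbf s}^2/(2z)+\cdots$, whose $[\cdot]_+$ part is $\bar{\mathbf s}(z)$ (meaning $\bar{\mathbf s}(-z)$ evaluated consistently with the sign convention).

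The main obstacle is convergence and well-definedness as formal functions. I have to check that for fixed $q^\beta$ and a fixed polynomial degree in $\mathbf s$ only finitely many terms contribute, and that the stability hypothesis $(2g-2+m,n,\beta)\ne(0,0,0)$ in Proposition~\ref{prop:basic-wc} excludes no terms that should appear. In the unstable range, namely $g=0$ with $m\le1$ and small $n$, I would adopt the convention that undefined invariants vanish on both sides and check the lowest terms by hand. Finiteness holds because every heavy insertion $\mu$ is $O(\mathbf s)$ and the $q$-expansion is graded by $\beta$.
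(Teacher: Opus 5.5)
Your argument is correct and is essentially the paper's proof: regroup the Corollary's ordered set partitions by block sizes via the multinomial count $n!/\prod \ell_i!$, cancel the $1/n!$, and resum $\sum_{\ell\ge1}\frac1{\ell!}\Phi^+(\mathbf s(-z)^\ell/z^{\ell-1})=\Phi^+(ze^{\mathbf s(-z)/z}-z)=\mu(\mathbf s,z)$ using linearity of $\Phi$ and $[\cdot]_+$. One caution on the side remark, which does not affect the identity and which the paper also leaves unproved: your computation $[z(e^{\bar{\mathbf s}(-z)/z}-1)]_+=\bar{\mathbf s}$ at $q^0$ holds only for $z$-independent $\mathbf s$, because for general $\mathbf s(z)$ the linear term is $\bar{\mathbf s}(-z)$ and truncations such as $[\bar{\mathbf s}(-z)^2/(2z)]_+$ need not vanish.
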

  \begin{proof}
    The $n = 0$ terms are trivially equal. 
    Applying the previous Corollary and using the linearity of $\Phi$, we have
    \begin{align*}
      & \sum_{n > 0} \frac{1}{n!} \langle - | \mathbf s(\psi) ,\ldots, \mathbf s(\psi)
        \rangle^{0^+, \mathrm{tw}}_{g,m|n}\\
      = & \sum_{n > 0} \frac{1}{n!} \sum_{k>0} \frac{1}{k!} \sum_{
          b_1+\cdots+b_k=n,b_i>0} \binom{n}{b_1 \cdots b_k}
          \Big\langle - ,
          \Phi^+(\tfrac{\mathbf s(-z)^{b_1} }{z^{b_1-1}})\big |_{z = - \psi},\cdots, \Phi^+(\tfrac{\mathbf s(-z)^{b_k} }{z^{b_k-1}})\big |_{z = - \psi}
          \Big\rangle^{0^+, \mathrm{tw}}_{g,m+k}\\
      = & \sum_{k>0} \frac{1}{k!} \sum_{b_1,\ldots,b_k>0} \prod_{i=1}^k \frac{1}{b_i!} \Big\langle - ,
          \Phi^+(\tfrac{\mathbf s(-z)^{b_1} }{z^{b_1-1}})\big |_{z = - \psi},\cdots, \Phi^+(\tfrac{\mathbf s(-z)^{b_k} }{z^{b_k-1}})\big |_{z = - \psi}
          \Big\rangle^{0^+, \mathrm{tw}}_{g,m+k}\\
      = & \sum_{k>0} \frac{1}{k!} \Big\langle - , \Phi^+(z e^{\frac{\mathbf s(-z)}{z}
          }-z),\cdots, \Phi^+(z e^{\frac{\mathbf s(-z)}{z} }-z) \Big\rangle^{0^+, \mathrm{tw}}_{g,m+k}.
    \end{align*}
    Finally, $\Phi^+(z e^{\frac{\mathbf s(-z)}{z}}-z) = [z\Phi(e^{\frac{\mathbf s(-z)}{z}})
    -z\Phi(\mathbf 1)]_{+} =  \mu(\mathbf s,z)$.
  \end{proof}

  For $\mathbf t(z) \in H^*_T(Y, \mathbb Q)[z]$, $\mathbf s(z)  \in H^*_T([V/G],
  \mathbb Q)[z]$, define the genus-$g$ potential functions 
	\[
  	\mathcal F_g^{0+,\tw}(\bt(\psi);  \mathbf s(\psi)) := \sum_{m,n \geq 0}
    \frac{1}{m!n!}\langle \mathbf t(\psi) ,\ldots, \mathbf t(\psi)    \mid
    \mathbf s(\psi) ,\ldots,     \mathbf s(\psi)
    \rangle^{0^+, \mathrm{tw}}_{g,m|n} \quad ,
  \]
  and
  \[
    \mathcal F_g^{\infty,\tw}(\bt(\psi)) := \sum_{m\geq 0} \frac{1}{m!}\langle
    \mathbf t(\psi)   ,\ldots, \mathbf t(\psi) 
    \rangle^{\infty, \mathrm{tw}}_{g,m} \quad .
  \]
  The latter is the generating series of usual twisted Gromov-Witten invariants,
  namely, $\epsilon = +\infty$ stable quasimap invariants with only heavy
  markings. In both summations, the unstable terms are taken to be $0$.

  \begin{theorem}
    \label{thm:wc-main}
    For $g\geq 1$, 
    \begin{equation}  \label{eqn:wc}
      \mathcal F_g^{0+,\tw}(\bt (\psi);  \mathbf s(\psi))=
      \mathcal F_g^{\infty,\tw}(\bt(\psi) + [\mathbb I(\mathbf s(-z),q,z)-z]_{+} \big|_{z = -\psi}  ) .
    \end{equation} 
    For $g=0$, the same formula holds modulo constant and linear terms in
    $\mathbf t$.
  \end{theorem}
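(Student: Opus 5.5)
Combine two wall-crossings: first replace all light markings by heavy ones via Proposition~\ref{prop:wall-crossing-big-I-form}, which turns $\mathcal F_g^{0+,\tw}(\bt;\mathbf s)$ into a purely heavy $\epsilon=0^+$ potential with shifted input; then pass from $\epsilon=0^+$ to $\epsilon=\infty$ via the usual $\epsilon$-wall-crossing, equivariant and twisted.

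\emph{Step 1 (heavy-light).} Fix the $m$ heavy insertions as $\bt(\psi)$, apply Proposition~\ref{prop:wall-crossing-big-I-form} for each $m$, and sum with weights $1/m!$. This gives
\[
\mathcal F_g^{0+,\tw}(\bt;\mathbf s)=\mathcal F_g^{0+,\tw}\bigl(\bt(\psi)+\mu(\mathbf s,-\psi);\emptyset\bigr),\qquad \mu(\mathbf s,z)=[\mathbb I(\mathbf s(-z),q,z)-\mathbb I(\mathbf 0,q,z)]_+ .
\]
The stability hypotheses of Proposition~\ref{prop:basic-wc} require $2g-2+m\ge 0$. For $g\ge1$ this always holds, except that the term $(g,m,n,\beta)=(1,0,0,0)$ is unstable and is set to zero on both sides. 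For $g=0$ the terms with $m\le1$ are not covered, and heavy-marking invariants with $m\le 2$ and $\beta=0$ are unstable. This is the source of the ``modulo constant and linear terms in $\bt$'' caveat, and I will track it by treating $g=0$ only in its stable range.

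\emph{Step 2 ($\epsilon$-wall-crossing).} Invoke the known formula of \cite{zhou2022quasimap} for $g\ge1$:
\[
\mathcal F_g^{0+}(\bt)=\mathcal F_g^{\infty}\bigl(\bt+[\mathbb I(\mathbf 0,q,z)-z]_+|_{z=-\psi}\bigr),
\]
where $\mathbb I(\mathbf 0,q,z)=z\Phi(\mathbf 1)$ is the small $I$-function. For genus $0$ the same formula holds modulo constant and linear terms. It must be extended to the equivariant twisted setting. I would follow the argument already used in Proposition~\ref{prop:basic-wc}: Zhou's master space with entangled tails carries the same $\mathbb C^*_z$-action. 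One expands $[\mathcal M]^{\mathrm{vir,tw}}$ in powers of $\rho^{-1}$ as in \eqref{eq:twisted-cycle-decomposition} and applies localization \eqref{eq:localization} coefficientwise. The formal residue of Lemma~\ref{lem:formal-residue} then kills the polynomial left side. The fixed-locus contributions are exactly those of the untwisted case, except that each rational-tail fixed locus $F_\beta$ carries the twisted cycle \eqref{eq:twisted-vir}, and gluing at the node contributes $1/e(\mathbb E)$. This is precisely the normalization built into $\Phi_\beta$. Hence the correction series is $[\mathbb I(\mathbf 0,q,z)-z]_+$ with the twisted $I$-function.

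\emph{Step 3 (combine).} Substitute Step 1 into Step 2. The inputs add:
\[
\bt+\mu(\mathbf s,-\psi)+[\mathbb I(\mathbf 0,q,z)-z]_+|_{z=-\psi}=\bt+[\mathbb I(\mathbf s(-z),q,z)-z]_+|_{z=-\psi},
\]
by linearity of $[\cdot]_+$. This yields \eqref{eqn:wc}.

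\emph{Main obstacle.} The hard part is Step 2 in the twisted setting. One must verify that the rational-tail contributions in Zhou's recursion, whose inductive structure handles entangled tails and calibrated tails, still assemble into $[\mathbb I-z]_+$ once the Euler class of $R\pi_*u^*\mathbb E$ is inserted. The point to check is that $e(R\pi_*u^*\mathbb E)$ is multiplicative along the node-splitting on each fixed locus, with a correction of exactly $e(\mathbb E)^{-1}$ per node. This follows from the normalization sequence. One must also check that all residues commute with $\rho^{-1}$-adic expansion, which is Lemma~\ref{lem:formal-residue}(3).
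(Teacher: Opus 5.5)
Your proposal is correct and follows the paper's argument. Proposition~\ref{prop:wall-crossing-big-I-form}, summed over the heavy $\bt$-insertions, trades the light markings for the shift $\mu(\mathbf s,-\psi)$; the $\epsilon$-wall-crossing of \cite{ciocan2020quasimap, zhou2022quasimap} (whose small $I$-function is the paper's $\mathbb I(\mathbf 0,q,z)/z$) supplies $[\mathbb I(\mathbf 0,q,z)-z]_+|_{z=-\psi}$; and the two shifts add to $[\mathbb I(\mathbf s(-z),q,z)-z]_+|_{z=-\psi}$. The paper simply cites the $\epsilon$-wall-crossing here, so your sketch of its equivariant twisted extension (coefficientwise localization after $\rho^{-1}$-expansion, with a factor $e(\mathbb E)^{-1}$ per node absorbed into $\Phi_\beta$) adds detail in the spirit of the proof of Proposition~\ref{prop:basic-wc}, and your handling of the genus-zero $m\le 1$ terms matches the ``modulo constant and linear terms'' caveat.
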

  \begin{proof}
    This follows immediately from
    Proposition~\ref{prop:wall-crossing-big-I-form}, together with
    the $\epsilon$ wall-crossing formula \cite{ciocan2020quasimap,
      zhou2022quasimap}. Note that the $I$-functions in this paper defers by a
    factor of $z$ from those in \cite{ciocan2020quasimap,
      zhou2022quasimap}.
    Namely, the small $I$-function in \cite{ciocan2020quasimap,
      zhou2022quasimap} equals to $\mathbb I(0,q,z)/z$ of this paper.
\end{proof}

\subsection{One-point wall-crossing formula for Fano complete intersections}
\label{subsec:one-point-wc}
We specialize the wall-crossing formula to invariants with one light marking.
Let
\[
  X=X_{m_1,\ldots,m_r}\subset \mathbb P^{N-1}
\]
be a Fano complete intersection with $m_i\geq2$, cut out by a section of $\mathbb E = \bigoplus_{i = 1}^{r}\mathcal O_{\mathbb P^{N-1}}(m_i)$.  
Its Fano index is $c_X=N-\sum_{j=1}^r m_j$.
Let $\lh$ denote the hyperplane class on $[\mathbb C^N/\mathbb C^*]$,
and write $\hh$ for its restriction to $\mathbb P^{N-1}$ and further to $X$.

We restrict to inputs
$\mathbf s(z)=s=\sum_i s^i\phi_i$, where $\phi_i=\lh^i$.
The $I$-function of $X$ is
\[
\mathbb I^{X}(s,q,z)
=z\sum_{d\ge 0}q^d
\exp\!\left(\sum_i\frac{s^i}{z}(\hh+dz)^i\right)
\frac{\prod_{j=1}^r\prod_{l=1}^{m_jd}(m_j\hh+lz)} {\prod_{l=1}^d(\hh+lz)^N},
\]
while the equivariant $\mathbb E$-twisted $I$-function of
$\mathbb P^{N-1}$ is
\[
\mathbb I^{\mathbb P ,\tw}(s,q,z)
=z\sum_{d\ge 0}q^d
\exp\!\left(\sum_i\frac{s^i}{z}(\hh+dz)^i\right)
\frac{\prod_{j=1}^r\prod_{l=1}^{m_jd}(m_j\hh+lz-\rho)} {\prod_{l=1}^d \prod_i(\hh+lz-\lambda_i) },
\]
where $\rho$ and $\lambda_i$ are the equivariant parameters introduced in \S \ref{sec:setup}.

We apply Theorem~\ref{thm:wc-main} to the quasimap theory of $X$ and
to the equivariant $\mathbb E$-twisted quasimap theory of
$\mathbb P^{N-1}$.
We write
$$
\bigl\langle - \bigr\rangle^{\epsilon,X}_{g,n}
\qquad\text{and}\qquad
\bigl\langle - \bigr\rangle^{\epsilon,\mathbb P,\tw}_{g,n}
$$
for the corresponding correlators, respectively.
For $\epsilon=\infty$, we define the $t$-shifted correlator by
$$
\bigl\langle -
\bigr\rangle^{\infty,\bullet,t}_{g,n} :=  \sum_{m\geq 0} \frac{1}{m!}\bigl\langle -, t,\cdots,t\bigr\rangle^{\infty,\bullet}_{g,n+m},
$$
where $\bullet=X$ or $\mathbb P,\tw$.
For $g\geq1$, differentiating equation~\eqref{eqn:wc} with respect to
$s^k$ and then setting $\bt=\mathbf s=0$ gives
\begin{equation}
	\label{eqn:onepointwc}
	\bigl\langle \phi_k \bigr\rangle^{\epsilon=0+,X}_{g,|1}
	=
	\bigl\langle \widetilde \phi_k(\psi) \bigr\rangle^{\infty,X,\tau}_{g,1},\qquad 
	\bigl\langle \phi_k \bigr\rangle^{\epsilon=0+,\mathbb P, \tw}_{g,|1}
	=
	\bigl\langle \widetilde \phi^{\tw}_k(\psi) \bigr\rangle^{\infty,\mathbb P, \tw,\tau}_{g,1}. 
\end{equation}
Here $\tau=[\mathbb I^{\bullet}(0,q,z)-z]_+$ is the mirror map
of the corresponding theory. The two mirror maps agree and are given by
\beq\label{eqn:tau}
\tau=
\begin{cases}
	\mathbf m!\,q\cdot\mathbf 1, & c_X=1,\\
	0, & c_X>1,
\end{cases}
\eeq
where $\mathbf m!:=\prod_{j=1}^r(m_j!)$.
The transformed insertions appearing in~\eqref{eqn:onepointwc} are
defined by
\[
\widetilde \phi_k(z)
:=
\left[
\left.\frac{\partial}{\partial s^k}\mathbb I^X(s,q,-z)\right|_{s=0}
\right]_+,
\qquad
\widetilde \phi^{\tw}_k(z)
:=
\left[
\left.\frac{\partial}{\partial s^k}
\mathbb I^{\mathbb P,\tw}(s,q,-z)\right|_{s=0}
\right]_+,
\]
where $[\ \cdot\ ]_+$ denotes the projection onto nonnegative powers of
$z$.
Since differentiation with respect to $s^k$ multiplies the degree-$d$
summand of $\mathbb I^\bullet(s,q,z)$ by
$z^{-1}(\hh+dz)^k$, we have
\[
\left.\frac{\partial}{\partial s^k}\mathbb I^{\bullet}(s,q,z)\right|_{s=0}
=
z^{-1}(\hh+zD)^k \mathbb I^{\bullet}(0,q,z)=:I^{\bullet,(k)}(q,z),
\]
where $D=q\frac{\pd}{\pd q}$.
Consequently,
\[
\widetilde \phi_k(z)=\bigl[I^{X,(k)}(q,-z)\bigr]_+,
	\qquad 
\widetilde \phi^{\tw}_k(z)=\bigl[I^{\mathbb P,\tw,(k)}(q,-z)\bigr]_+.
\]

We now express the right-hand side of \eqref{eqn:onepointwc} in terms of ancestor invariants. 
The descendant--ancestor relation~\cite{KM98,Giv01a} gives, for any $t$,
\begin{equation}
	\label{eqn:onepointwc-ancestor}
	\bigl\langle \widetilde \phi_k(\psi) \bigr\rangle^{\infty,X,t}_{g,1}
	=
	\bigl\langle
	\bigl[S^t(\bar\psi)\widetilde \phi_k(\bar\psi)\bigr]_+
	\bigr\rangle^{\infty,X,t}_{g,1},
\end{equation}
where $S^t(z)\in\End(H^{*}(X,\mathbb Q))[[z^{-1}]]$ is the $S$-matrix at $t$ and $\bar\psi$ is the ancestor psi-class. 
At the mirror point $\tau=\tau(q)$ given by \eqref{eqn:tau}, the mirror theorem~\cite{Giv96} gives
$$
\mathbb I^{X}(0,q,-z)=-zS^{\tau}(z)^{-1}{\bf 1}.
$$
The equivariant twisted theory satisfies the analogous identity with
$S^{t}(z)$ replaced by $S^{\tw,t}(z)$.
The two $S$-matrices satisfy the quantum differential equations
\beq\label{eqn:QDE}
z {d} S^{t}(z)=dt*_{t}S^{t}(z),\qquad
z {d} S^{\tw,t}(z)=dt*_{\tw,t}S^{\tw,t}(z).
\eeq
Here $*_t$ and $*_{\tw,t}$ denote the quantum products at $t$ in the untwisted theory and the equivariant twisted theory, respectively.
At $t=\tau$, we abbreviate the corresponding quantum products to $*$
and $*_{\tw}$, respectively.

\begin{theorem}
	For $g\geq 1$ and $k\geq 0$, the one-point wall-crossing formulas take the following ancestor form:
	\begin{equation}
		\label{eqn:onepointwc-final}
		\bigl\langle \lh^k \bigr\rangle^{\epsilon=0+,X}_{g,|1}
		=
		\bigl\langle T_k(\bar\psi) \bigr\rangle^{\infty,X,\tau}_{g,1},\qquad
		\bigl\langle \lh^k \bigr\rangle^{\epsilon=0+,\mathbb P,\tw}_{g,|1}
		=
		\bigl\langle T^{\tw}_k(\bar\psi) \bigr\rangle^{\infty,\mathbb P,\tw,\tau}_{g,1}.
	\end{equation}
	Set $\widetilde\hh=\hh+D(\tau)=\hh+\tau$.
	The insertions $T_k$ and $T_k^{\tw}$ are determined by the recursions
	\begin{equation}
		\label{eqn:Tk-recursion}
		T_0(z)=1,\qquad
		T_k(z)=\widetilde\hh*T_{k-1}(z)-zD (T_{k-1}(z)),
	\end{equation}
	and
	\begin{equation}
		\label{eqn:Tk-recursiontw}
		T_0^{\tw}(z)=1,\qquad
		T_k^{\tw}(z)=\widetilde\hh*_{\tw}T^{\tw}_{k-1}(z)-zD (T^{\tw}_{k-1}(z)).
	\end{equation}
\end{theorem}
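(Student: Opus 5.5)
Starting from \eqref{eqn:onepointwc} and \eqref{eqn:onepointwc-ancestor}, the first formula reduces to the identity
\[
T_k(z)=\bigl[S^\tau(z)\,\widetilde\phi_k(z)\bigr]_+ ,
\]
understood as an identity of insertions in the ancestor correlator. Since the ancestor potential at $\tau$ only depends on $[S^\tau\widetilde\phi_k]_+$, it suffices to show $S^\tau(z)\,I^{X,(k)}(q,-z)\cdot(-z)^{-1}$-type expressions have the predicted nonnegative part. Concretely, I will check that $S^\tau(z)\bigl[I^{X,(k)}(q,-z)\bigr]_+$ and $S^\tau(z)I^{X,(k)}(q,-z)$ have the same $[\,\cdot\,]_+$, and then compute the latter exactly. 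The first point follows because $S^\tau(z)=\mathrm{Id}+O(z^{-1})$, so $S^\tau$ applied to the negative part $[I^{X,(k)}]_-\in z^{-1}H[[z^{-1}]]$ stays in $z^{-1}H[[z^{-1}]]$.

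For the exact computation, the mirror theorem gives $I^{X,(0)}(q,-z)=(-z)^{-1}\mathbb I^X(0,q,-z)=S^\tau(z)^{-1}\mathbf 1$, and $I^{X,(k)}(q,-z)=(\hh-zD)^kI^{X,(0)}(q,-z)$ with $z$ replaced by $-z$. I will prove by induction on $k$ that
\[
S^\tau(z)\,I^{X,(k)}(q,-z)=T_k(z),
\]
an element of $H^*(X)[z]$. The case $k=0$ is the mirror theorem. For the step, write $I^{X,(k)}=(\hh-zD)I^{X,(k-1)}=(\hh-zD)\bigl(S^{\tau,-1}T_{k-1}\bigr)$. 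Then $S^\tau$ applied to this equals
\[
S^\tau\hh S^{\tau,-1}T_{k-1}-z\,S^\tau D(S^{\tau,-1})T_{k-1}-zD(T_{k-1}).
\]
Now I use two facts. First, the divisor equation for the $S$-matrix, together with $D$ acting on $q$, gives $zD S^{\tau}(z)=(\hh+D\tau)*S^\tau-S^\tau\hh$. This combines the dependence of $\tau=\tau(q)$ through \eqref{eqn:QDE} with the Novikov dependence $q^d$, using the string equation for the $\mathbf 1$-direction of $\tau$. Second, it yields $S^\tau\hh S^{\tau,-1}-zS^\tau D(S^{\tau,-1})=\widetilde\hh*$. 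Substituting these gives exactly the recursion \eqref{eqn:Tk-recursion}. In particular $T_k$ is polynomial in $z$, so the truncation $[\,\cdot\,]_+$ is harmless, which completes the argument.

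Sign and convention bookkeeping comes next: $z\mapsto -z$ in $I$, $-z$ the Chern class convention, and the descendant–ancestor relation. I will check these at $k=1$, where $T_1=\widetilde\hh$ must match $[S^\tau(\hh-zD)S^{\tau,-1}\mathbf 1]_+$. The twisted case is identical, using the equivariant twisted mirror theorem, the twisted divisor equation and \eqref{eqn:QDE} for $S^{\tw,t}$. The mirror map is the same $\tau$, and $D\tau=\tau$ since $\tau$ is linear in $q$.

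The main obstacle is the divisor-type identity $zDS^\tau=\widetilde\hh*S^\tau-S^\tau\hh$ with correct signs, because $D$ differentiates both Novikov variables and $\tau(q)$. I would derive it from the divisor axiom applied to the two-point descendant correlators defining $S^t$, and then add the chain-rule term $D\tau\cdot\partial_{\mathbf 1}S$ via \eqref{eqn:QDE}.
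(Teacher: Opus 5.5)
Your proposal is correct and follows essentially the same route as the paper: induction on $k$ starting from the mirror theorem $I^{X,(0)}(q,-z)=S^\tau(z)^{-1}\mathbf 1$. The inductive step uses the divisor equation plus the QDE; your identity $zD S^\tau=\widetilde\hh*S^\tau-S^\tau\hh$ is equivalent to the paper's $(\hh-zD)(S^\tau(z)^{-1}v)=S^\tau(z)^{-1}(\widetilde\hh*v-zD(v))$, and both give $S^\tau(z)I^{X,(k)}(q,-z)=T_k(z)$, which is then inserted into the descendant--ancestor relation. Your explicit remarks fill in two steps the paper leaves implicit: that $[S^\tau[I^{X,(k)}]_+]_+=[S^\tau I^{X,(k)}]_+$ because $S^\tau=\mathrm{Id}+O(z^{-1})$, and that the chain-rule term $D\tau\cdot\partial_{t^0}S^\tau$ is what produces the shift $\hh\mapsto\widetilde\hh$.
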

\begin{proof}
	We prove the untwisted case; the equivariant twisted case is analogous.
	The mirror theorem gives
	$I^{X,(0)}(q,-z)=S^\tau(z)^{-1}{\bf1}$.
	Assume inductively that
	\[
	I^{X,(k-1)}(q,-z)=S^\tau(z)^{-1}T_{k-1}(z),
	\]
	for some $k\geq 1$. Then
	\[
	I^{X,(k)}(q,-z)
	=(\hh-zD)I^{X,(k-1)}(q,-z)
	=(\hh-zD)S^\tau(z)^{-1}T_{k-1}(z).
	\]
	The divisor equation and the quantum differential equation imply that,
	for any $q$-dependent cohomology-valued series $v$,
	\[
	(\hh-zD)\bigl(S^\tau(z)^{-1}v\bigr)
	=
	S^\tau(z)^{-1}\bigl(\widetilde\hh*v-zD(v)\bigr).
	\]
	Applying this identity with $v=T_{k-1}(z)$ gives
	\[
	I^{X,(k)}(q,-z)
	=
	S^\tau(z)^{-1}
	\bigl(\widetilde\hh*T_{k-1}(z)-zD (T_{k-1}(z))\bigr)
	=S^\tau(z)^{-1}T_{k}(z).
	\]
	Thus, by induction, $I^{X,(k)}(q,-z)=S^\tau(z)^{-1}T_{k}(z)$ for all $k\geq 0$.
	Therefore,
	$$
	 \bigl[S^{\tau}(\bar\psi)\widetilde \phi_k(\bar\psi)\bigr]_{+}
	=\bigl[S^{\tau}(\bar\psi)I^{X,(k)}(q,-\bar\psi)\bigr]_+ 
	=T_{k}(\bar\psi).
	$$
	Combining this identity with equations~\eqref{eqn:onepointwc} and \eqref{eqn:onepointwc-ancestor} proves the theorem.
\end{proof}

\section{Genus-one Virasoro conjecture for Fano complete intersections}
\label{sec:virasoro}
In this section, we prove Theorem~\ref{mainthm:genus-one-virasoro}: the genus-one ambient Virasoro conjecture holds for the Fano complete intersections in projective spaces.

\subsection{Notation and basic properties}
We retain the notation $X$, $\hh$, and $\fanoind $ from
\S \ref{subsec:one-point-wc}.
Then $\dim X=N-1-r$ and $c_1(X)=\fanoind \hh$.
Throughout this section, ordinary cohomology is taken with rational coefficients, 
which we suppress from the notation. Whenever quantum products or generating series are involved, 
we implicitly extend scalars to the Novikov ring and use the same notation for the resulting modules.
Let $j:X\hookrightarrow \mathbb P^{N-1}$ be the inclusion. 
We denote by $H^{\rm amb}(X):={\rm Im}(j^*)$ the ambient state space, 
and by $H^{\rm pri}(X)$ its orthogonal complement with respect to the Poincar\'e
pairing $\eta$. Thus $H^*(X)=H^{\rm amb}(X)\oplus H^{\rm pri}(X)$. By abuse of
notation, $j^*(\hh^a)$ is still denoted by $\hh^a$, and
$H^{\rm amb}(X)=\sspan\{\phi_a=\hh^a\}_{a=0}^{\dim X}$. We fix a homogeneous
basis $\{\gamma_i\}$ of $H^{\rm pri}(X)$ and denote by $\{\gamma^i\}$ its dual
basis, i.e. $\eta(\gamma^i,\gamma_j)=\delta_{ij}$.
For an endomorphism $A$ of $H^*(X)$, we use the supertrace
$$
\str(A):=\sum_{a=0}^{\dim X}\eta(\phi_a,A\phi^a)
+\sum_i\eta(\gamma_i,A\gamma^i),
$$
where $\{\phi^a\}$ is the $\eta$-dual basis of $\{\phi_a\}$.

Let
$t=\sum_{a=0}^{\dim X}t^a\phi_a$ be an arbitrary point of
$H^{\rm amb}(X)$, and write
\[
\corr{-}_{g,n}(t)
:=
\bigl\langle-\bigr\rangle^{\infty,X,t}_{g,n}.
\]
We also write $F_g(t):=\corr{\ }_{g,0}(t)$ for the
genus-$g$ potential on the small ambient phase space. For $2g-2+n>0$, the
divisor equation gives
\beq\label{eqn:divisor}
\frac{\pd}{\pd t^1}\corr{-}_{g,n}
=q\frac{\pd}{\pd q}\corr{-}_{g,n}.
\eeq
The quantum product $*_{t}$ on $H^*(X)$, with
$t\in H^{\rm amb}(X)$, is defined by
$$
\eta(v_1*_{t}v_2,v_3)=\corr{v_1,v_2,v_3}_{0,3},
\qquad v_1,v_2,v_3\in H^*(X).
$$
The ambient subspace is closed under $*_{t}$, and we use the same
notation for the induced quantum product on $H^{\rm amb}(X)$.

In what follows, we identify the vector field $\pd/\pd t^a$ with the class
$\phi_a$. Introduce
$$
E=\sum_{a=0}^{\dim X}(1-a) t^a\phi_a+c_1(X),\qquad
\mu=1-\frac{\dim X}{2}-\nabla E,
$$
where $\nabla$ is the Levi--Civita connection of $\eta$. Then
$E|_{ t=0}=\fanoind\,\hh$ and
$\mu(\phi_a)=(a-\frac{\dim X}{2})\phi_a$. 
The operator $\mu$ is skew-adjoint:
$\eta(v_1,\mu(v_2))=-\eta(\mu(v_1),v_2)$.
More generally, for $\alpha\in H^{p,q}(X)$, one has
$\mu(\alpha)=(p-\frac{\dim X}{2})\alpha$.
In particular, $\mu$ preserves both $H^{\rm amb}(X)$ and
$H^{\rm pri}(X)$.

The dimension axiom gives the following homogeneity relation for $F_0$:
\beq\label{eqn:hom-Fg}
E(F_0( t))=(3-\dim X)F_0( t)
+\tfrac{1}{2}\eta( t,c_1(X)\cup t).
\eeq
Taking three derivatives in the ambient directions gives
\beq\label{eqn:hom-quantum-product}
E\big(\corr{\phi_a,\phi_b,\phi_c}_{0,3}\big)
=(a+b+c-\dim X)\corr{\phi_a,\phi_b,\phi_c}_{0,3}.
\eeq
We shall use the following reformulation.

\begin{lemma}\label{lem:hom-quantum-product}
	Let $\mathfrak D_E:=E+\mu+\frac{\dim X}{2}$, where $E$ acts on the
	coefficient functions of cohomology-valued vector fields. 
	Then for any ambient vector fields $v_1,v_2$, possibly depending on $ t$, one has
	$$
	\mathfrak D_E(v_1*_{ t}v_2)
	=
	\mathfrak D_E(v_1)*_{ t}v_2
	+
	v_1*_{ t}\mathfrak D_E(v_2).
	$$
\end{lemma}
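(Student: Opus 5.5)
The plan is to reduce the identity to the homogeneity relation \eqref{eqn:hom-quantum-product} by expanding everything in the ambient basis $\{\phi_a=\hh^a\}_{a=0}^{\dim X}$. Write $n=\dim X$ and
\[
v_1=\sum_a f^a\phi_a,\qquad v_2=\sum_b g^b\phi_b,
\]
with coefficient functions $f^a,g^b$ of $t$ (and of $q$).

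First, I would express the quantum product in this basis. Since $H^{\rm amb}(X)$ is closed under $*_t$, we can write $\phi_a*_t\phi_b=\sum_c C_{ab}^c\phi_c$. Here $C_{ab}^c=\corr{\phi_a,\phi_b,\phi^c}_{0,3}$, and $\phi^c$ is the $\eta$-dual basis inside the ambient space. Because $\eta(\phi_a,\phi_b)=\deg(X)\,\delta_{a+b,n}$, we have $\phi^c=\phi_{n-c}/\deg X$. The primitive part does not contribute, since ambient classes pair trivially with $H^{\rm pri}(X)$. Applying \eqref{eqn:hom-quantum-product} with $(a,b,n-c)$ then gives
\[
E(C_{ab}^c)=(a+b-c)\,C_{ab}^c .
\]

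Second, I would record how $\mathfrak D_E$ acts. The vector field $E$ acts as a derivation on products of coefficient functions. The operator $\mu+\tfrac n2$ is linear over functions and satisfies $(\mu+\tfrac n2)\phi_c=c\,\phi_c$. Hence
\[
\mathfrak D_E\Bigl(\sum_c h^c\phi_c\Bigr)=\sum_c\bigl(E(h^c)+c\,h^c\bigr)\phi_c .
\]

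Third, I would compute both sides. On the left, $v_1*_t v_2=\sum_{a,b,c}f^ag^bC_{ab}^c\phi_c$. Applying the formula above together with the homogeneity of $C_{ab}^c$ gives
\[
\sum_{a,b,c}\Bigl(E(f^a)g^b+f^aE(g^b)+(a+b-c)f^ag^b+c\,f^ag^b\Bigr)C_{ab}^c\phi_c .
\]
The $c$-terms cancel, leaving
\[
\sum_{a,b,c}\bigl(E(f^a)g^b+f^aE(g^b)+(a+b)f^ag^b\bigr)C_{ab}^c\phi_c .
\]
On the right, $\mathfrak D_E(v_1)=\sum_a(E(f^a)+af^a)\phi_a$ and $\mathfrak D_E(v_2)=\sum_b(E(g^b)+bg^b)\phi_b$. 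Multiplying these out term by term with $*_t$ produces exactly the same sum, which proves the lemma.

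The only point needing care is the first step. One must check that \eqref{eqn:hom-quantum-product} is legitimately applied to $C^c_{ab}$, which involves the dual index $n-c$. It also has to be used with $E$ acting on the full coefficient dependence, including the $t^1$ and $q$ dependence tied together by the divisor equation \eqref{eqn:divisor}. Since \eqref{eqn:hom-quantum-product} is stated for all ambient triples, this is direct, and I do not expect a genuine obstacle.
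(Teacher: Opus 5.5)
Your proposal is correct and follows essentially the same route as the paper: check the identity on basis fields by pairing with a basis and invoking \eqref{eqn:hom-quantum-product}, then extend to arbitrary ambient fields by linearity and the Leibniz rule for $E$ acting on coefficient functions. The only cosmetic difference is that you use the explicit dual basis $\phi^c=\phi_{n-c}/\deg X$. The paper instead uses the equivalent identity $\eta(v_1,\deg v_2)+\eta(\deg v_1,v_2)=\dim X\,\eta(v_1,v_2)$ for $\deg=\mu+\tfrac{\dim X}{2}$.
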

\begin{proof}
	Set $\deg=\mu+\frac{\dim X}{2}$. Then $\deg(\phi_a)=a\, \phi_a$ and
	$\eta(v_1,\deg(v_2))+\eta(\deg(v_1),v_2)=\dim X\,\eta(v_1,v_2)$. Pairing
	with an arbitrary basis vector and using~\eqref{eqn:hom-quantum-product}, we
	obtain, for basis vector fields,
	$$
	E(\phi_a*_{ t}\phi_b)
	=
	\deg(\phi_a)*_{ t}\phi_b
	+\phi_a*_{ t}\deg(\phi_b)
	-\deg(\phi_a*_{ t}\phi_b).
	$$
	This is the desired identity for basis vector fields. The general case
	follows from linearity and the Leibniz rule for the coefficient functions.
\end{proof}

\subsection{Reconstruction of genus-one Virasoro constraints}
This subsection reduces the genus-one Virasoro constraints for Fano
complete intersections to a finite collection of genus-one one-point
identities at the origin of the small phase space. 

Firstly, by the genus-one topological recursion relation, the genus
one $L_{k-1}$-constraint can be reduced to the following small-phase-space
formulation:
\beq\label{eqn:vira-genus-one}
E^{k}(F_1( t))
=-\frac{1}{4}\sum_{i+j=k-1}\str\big(\E^i\mu \E^j\mu \big)
-\frac{1}{24}\sum_{i+j=k-1}\str\big((\E^i\mu\E^j{\bf 1})*_{ t}\big),
\eeq
where $E^k=E*_{t}\cdots*_{t}E$ denotes the $k$-fold quantum
power for $k\geq1$, and we set
$E^0={\bf 1}=\pd/\pd t^0$.
When applied to a function, $E^k$ denotes differentiation along this
vector field. We also write $\E=E*_{t}$ for quantum multiplication by
$E$ and set $\E^0={\rm Id}$.
\begin{remark}
	The small-phase-space formulation~\eqref{eqn:vira-genus-one} of the genus-one
	Virasoro constraints was obtained by Liu~\cite[Theorem~4.4]{Liu01}.
	The convention used here follows the formulation in~\cite{GZ25}.
	We also refer to~\cite{GZ25} for an extension of this formula to
	cohomological field theories with non-flat units.
\end{remark}

Secondly, the genus-one Virasoro conjecture can be further simplified by using the Getzler relation~\cite{Get97}. Introduce
$$
\cG_k( t)
:=E^k(F_1( t))+\frac{1}{24}\sum_{a+b=k-1}\str\big((\E^a\mu\E^b{\bf 1})*_{ t}\big)
+\frac{1}{4} \sum_{a+b=k-1}\str\big(\E^a\mu \E^b\mu\big).
$$
Then the genus-one $L_{k-1}$-constraint is equivalent to $\cG_k({ t})=0$.
Liu proved that\footnote{In~\cite{Liu01}, this identity was proved under
	the assumption $H^{\rm odd}(X)=0$, which does not hold for
	odd-dimensional complete intersections. For each odd cohomology class
	$v$, one may introduce a formal odd variable $s_v$ such that $s_vv$ is
	even. With this formal modification, Liu's argument applies without
	change when $H^{\rm odd}(X)\ne0$.}
for $m,n\geq0$ with $m+n>0$,
\beq\label{eqn:Get-Vir}
E^n(\cG_{m+1}( t))=\frac{m(m+1)}{m+n}\cG_{m+n}( t).
\eeq
In particular, by taking $m=1,n=k-1$ in equation~\eqref{eqn:Get-Vir}, one has for $k\geq 1$,
\beq\label{eqn:hk-h2}
\cG_{k}( t)=\frac{k}{2}\, E^{k-1}(\cG_2( t)).
\eeq
Therefore, together with the known cases $\cG_0( t)=\cG_1( t)=0$, the genus-one Virasoro conjecture holds true if and only if $\cG_2( t)\equiv 0$, i.e. the $L_1$-constraint holds.

Lastly, note that $\cG_2( t)$ is a power series in the variables $\{ t^i\}$. In general, to verify the single equation $\cG_2( t)\equiv 0$, one still needs to compute infinitely many invariants. For Fano complete intersections, we prove that this condition can be further reduced to finitely many initial values at the origin of the small phase space. Consequently, the genus-one Virasoro constraints are reduced to a finite collection of numerical identities.

\begin{proposition}\label{prop:virasoro-reconstruction}
	For the Fano complete intersections $X\subset \mathbb P^{N-1}$, the genus-one Virasoro constraints 
	$\cG_k( t)=0$, $k\geq 0$,
	hold for arbitrary $ t\in H^{\rm amb}(X)$ if and only if 
	$$
	\cG_k(0)=0,\qquad  {\text{for }}\quad k=2,\cdots,\dim X+1.
	$$ 
\end{proposition}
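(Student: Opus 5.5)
The plan is to turn the family $\{\cG_k\}$ into the solution of a first-order linear system of PDEs in $ t\in H^{\rm amb}(X)$, and then use uniqueness of formal solutions with given initial value at $ t=0$. The input is Liu's relation~\eqref{eqn:Get-Vir}, together with the structure of the ambient quantum cohomology of $X$. The ``only if'' direction is trivial, so everything concerns the ``if'' direction. Throughout, $\cG_k( t)$ is regarded as a formal power series in $t^0,t^2,\dots,t^{\dim X}$ and $t^1$, with coefficients in the Novikov ring $\mathbb Q[\![q]\!]$. By the divisor equation~\eqref{eqn:divisor}, $\partial/\partial t^1$ acts as $q\,\partial/\partial q$.

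\emph{Step 1 (a frame of vector fields).} I claim that the vector fields $E^0={\bf 1},E^1,\dots,E^{\dim X}$ form a frame of the ambient tangent directions over the formal neighbourhood of $ t=0$. At $ t=0$ we have $E=\fanoind\hh$, so $E^n|_{ t=0}=\fanoind^n\,\hh^{*n}$. For a Fano complete intersection the small quantum powers satisfy $\hh^{*n}=\hh^n+O(q)$, with the $O(q)$ corrections lying in the ambient part. Since $\fanoind\neq0$, the transition matrix from $\{\phi_a\}$ to $\{E^n\}_{n\le\dim X}$ is therefore invertible over $\mathbb Q[\![q]\!][\![ t]\!]$. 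The frame is unitriangular modulo $q$ and $ t$, so it stays invertible in a formal neighbourhood.

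Because the ambient algebra $(H^{\rm amb}(X),*_t)$ has rank $\dim X+1$, the same argument gives a relation
\[
E^{\dim X+1}=\sum_{n=0}^{\dim X}c_n( t)\,E^n
\]
with formal coefficients $c_n$. Iterating this expresses every $E^N$ as a combination $\sum_{n\le\dim X}c^{(N)}_n( t)E^n$.

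\emph{Step 2 (a closed linear system).} I next want a linear relation expressing $\cG_{\dim X+2}$ through $\cG_1,\dots,\cG_{\dim X+1}$. By~\eqref{eqn:hk-h2}, $\cG_k=\frac k2E^{k-1}(\cG_2)$ for $k\ge1$, and differentiation along vector fields is linear in the vector field over functions. Hence
\[
\cG_{\dim X+2}=\tfrac{\dim X+2}{2}\sum_n c_n( t)\,E^n(\cG_2)=\sum_n c_n( t)\,\tfrac{\dim X+2}{n+1}\,\cG_{n+1},
\]
and the same holds for every $\cG_N$. Now~\eqref{eqn:Get-Vir} gives, for $1\le m\le \dim X$ and $0\le n\le\dim X$,
\[
E^n(\cG_{m+1})=\frac{m(m+1)}{m+n}\,\cG_{m+n}.
\]
The case $m=0$ is $E^n(\cG_1)=0$, consistent with $\cG_1\equiv 0$. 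Replacing $\cG_{m+n}$ with $m+n>\dim X+1$ by the combinations above, the vector $\mathbf G=(\cG_2,\dots,\cG_{\dim X+1})$ satisfies a system
\[
E^n(\mathbf G)=A_n( t)\,\mathbf G,\qquad 0\le n\le\dim X,
\]
where the $A_n$ are matrices of formal functions. Here $\cG_0=\cG_1=0$ is used so that no inhomogeneous terms appear.

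\emph{Step 3 (uniqueness).} Inverting the frame from Step 1, the system becomes
\[
\partial_{t^a}\mathbf G=B_a( t)\,\mathbf G,\qquad a=0,\dots,\dim X,
\]
with formal coefficients $B_a$. A formal power-series solution of such a system is determined by its value at $ t=0$, which is an element of $\mathbb Q[\![q]\!]^{\dim X}$. To see this, one shows inductively on total $ t$-degree that all Taylor coefficients vanish whenever $\mathbf G(0)=0$; no integrability issue arises because we only need uniqueness. Hence $\cG_k(0)=0$ for $2\le k\le\dim X+1$ implies $\mathbf G\equiv0$. Then~\eqref{eqn:hk-h2} yields $\cG_k\equiv0$ for all $k$.

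\emph{Expected obstacle.} The main difficulty is Step 1, and its consequence in Step 2 that higher $E^N$ lie in the span of $E^0,\dots,E^{\dim X}$ with well-defined formal coefficients. This needs the explicit form $\hh^{*n}=\hh^n+O(q)$ of the ambient small quantum powers for $n\le\dim X$, including the index-one case where $\tau\neq0$, together with the fact that $\fanoind\neq 0$, so the frame is invertible. I would take this input from the known small quantum cohomology (Givental's $I$-function / the quantum differential equation~\eqref{eqn:QDE}). I would also double-check that Liu's identity~\eqref{eqn:Get-Vir} remains valid when the vector fields $E^n$ are restricted to ambient directions, since $\cG_k$ is only considered on $H^{\rm amb}(X)$.
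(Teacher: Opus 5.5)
Your argument is correct and is essentially the paper's proof. Both combine Liu's relation \eqref{eqn:Get-Vir} (with $\cG_0=\cG_1=0$) with the fact that $E^0,\dots,E^{\dim X}$ frame the ambient directions near $t=0$, since $E^k|_{t=0}=\fanoind^k\hh^{*k}$ and $\hh^{*k}=\hh^k+O(q)$, and then conclude by order-by-order uniqueness of formal solutions; the only difference is packaging: the paper derives the second-order relation $E^n(E^m(\cG_2))=m\,E^{m+n-1}(\cG_2)$ for $\cG_2$ alone and converts its constant and linear Taylor parts into the values $\cG_k(0)$ via $\cG_k(0)=\tfrac{k}{2}E^{k-1}_0(\cG_{2,1})$, whereas you close up a first-order system for $(\cG_2,\dots,\cG_{\dim X+1})$, which costs the extra (harmless) step of expanding $E^{N}$ for $N>\dim X$ in the frame but lets you read off the initial data $\cG_k(0)$ directly.
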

\begin{proof}
	Throughout the proof, all computations are restricted to the ambient space.
	We first derive a consequence of equations~\eqref{eqn:Get-Vir}
	and~\eqref{eqn:hk-h2}. By applying $E^n$ to the $k=m+1$ case of
	equation~\eqref{eqn:hk-h2}, we have
	$$
	E^{n}(\cG_{m+1}( t))
	=
	\frac{m+1}{2}\,E^n\big(E^{m}(\cG_2( t))\big).
	$$
	On the other hand, by equation~\eqref{eqn:Get-Vir} and by the $k=m+n$ case of
	equation~\eqref{eqn:hk-h2}, we also have
	$$
	E^{n}(\cG_{m+1}( t))
	=
	\frac{m(m+1)}{m+n}\cG_{m+n}( t)
	=
	\frac{m(m+1)}{2}E^{m+n-1}(\cG_2( t)).
	$$
	Comparing these two equations, we obtain
	\beq
  \label{eqn:Get-h2}
	E^n\big(E^{m}(\cG_2)\big)
	=
	m\,E^{m+n-1}(\cG_2),
	\qquad m,n\geq 0,\quad m+n>0.
	\eeq
	
	Now we consider the Taylor expansion of $\cG_2( t)$ at the origin. We assign
	${\rm ord}( t^a)=1$ and write
	$$
	\cG_2( t)=\sum_{l\geq 0}\cG_{2,l}( t),
	$$
	where $\cG_{2,l}$ is the homogeneous part of order $l$. Similarly, we write
	$$
	E^k=\sum_{l\geq 0}E^k_l,
	$$
	where $E^k_l$ has coefficients, with respect to the basis
	$\{\pd_{ t^i}\}_{i=0}^{\dim X}$, homogeneous of order $l$ in $ t$.
	Taking the order-$l$ part of~\eqref{eqn:Get-h2} acting on $\cG_{2,l+2}$ gives
	$$
	E^n_0\big(E^m_0(\cG_{2,l+2})\big)
	=
	m\sum_{i=0}^{l+1}E^{m+n-1}_i(\cG_{2,l+1-i})
	-
	\sum_{i=1}^{l+2}\sum_{a+b=i}
	E^n_a\big(E^m_b(\cG_{2,l+2-i})\big).
	$$
	The right-hand side only involves lower order terms $\cG_{2,0},\ldots,\cG_{2,l+1}$. 
	Since $E_0^k=\fanoind^k\hh^{*k}$ and $\hh^{*k}=\hh^k+O(q)$, the vectors
	$\{E_0^k\}_{k=0}^{\dim X}$ generate $H^{\rm amb}(X)$ over the Novikov ring.
	Thus the above identity reconstructs $\cG_{2,l+2}$ from lower order terms.
	Hence $\cG_2( t)$ is determined by its constant and linear parts. 
	In particular,
	$$
	\cG_{2,0}=0,\quad \cG_{2,1}=0   \quad\Longrightarrow\quad	\cG_2( t)\equiv 0.
	$$
	
	It remains to translate these two initial conditions into the stated numerical conditions.
	The condition $\cG_{2,0}=0$ is exactly $\cG_2(0)=0$. 
	Moreover, by taking the constant part of equation~\eqref{eqn:hk-h2}, we have
	$$
	\cG_{k}(0)=\frac{k}{2}\,E^{k-1}_0(\cG_{2,1}), \qquad k\geq 1.
	$$
	Using again that $\{E^k_0\}_{k=0}^{\dim X}$ generates $H^{\rm amb}(X)$, and the fact $\cG_{1}(0)=0$,
	we see that $\cG_{2,1}=0$ if and only if
	$$
	\cG_{k}(0)=0,\qquad k=2,\ldots,\dim X+1.
	$$
	Therefore $\cG_2( t)\equiv 0$ is equivalent to the above finite collection of
	initial values. By~\eqref{eqn:hk-h2}, this is further equivalent to the genus
	one Virasoro constraints for arbitrary $ t\in H^{\rm amb}(X)$. The proposition
	is proved.
\end{proof}
We next express these initial values in terms of genus-one one-point
invariants, in a form suitable for comparison with the twisted theory.
The initial values above are taken at $t=0$, whereas the comparison
formula below is written at the mirror point $t=\tau$. Since $\tau$
lies in the unit direction, the string equation identifies the quantum
products and the primary correlators at these two points. We therefore
use $*$ and $\langle-\rangle_{g,n}$ for their common values.
\begin{corollary}\label{cor:vira-initial}
	For the Fano complete intersections $X\subset \mathbb P^{N-1}$, the genus-one Virasoro constraints 
	hold for arbitrary $ t\in H^{\rm amb}(X)$ if and only if, for $k=2,\cdots,\dim X+1$,
	\begin{align}\label{eqn:genus-one-formula}
		\<\widetilde{\hh}^{*k}\>_{1,1}
		=&-\frac{1}{24\,\fanoind}\sum_{a+b=k-1}\str\big(\widetilde{\hh}^{*a}*\mu(\widetilde{\hh}^{*b})* \big)
		-\frac{1}{4\, \fanoind} \sum_{a+b=k-1}\str\big(\widetilde{\hh}^{*a}* \,\mu\, \widetilde{\hh}^{*b}*\,\mu\big).
	\end{align}
	Recall here that $\widetilde{\hh}=\hh+\tau=\hh+\delta_{\fanoind,1}{\bf m}!\, q\cdot {\bf 1}$.
\end{corollary}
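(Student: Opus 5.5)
By Proposition~\ref{prop:virasoro-reconstruction}, it suffices to rewrite the condition $\cG_k(0)=0$, for $k=2,\dots,\dim X+1$, in the displayed form. The plan is to evaluate each of the three terms of $\cG_k$ at $t=0$ and identify it with the corresponding term of \eqref{eqn:genus-one-formula}.

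\emph{The derivative term.} At $t=0$ the Euler field is $E|_{t=0}=\fanoind\hh$. Since quantum multiplication is $\mathbb Q$-linear, we get $E^k|_{t=0}=\fanoind^k\hh^{*k}$, where $*$ is the quantum product at $t=0$. Because $\tau$ lies in the unit direction, the string equation identifies the quantum product at $t=0$ with the one at $\tau$. It also identifies $\pd_{t^0}$-derivatives of $F_1$ with genus-one primary correlators. So, viewing $E^k$ as a vector field acting on $F_1$,
$$E^k(F_1)(0)=\fanoind^k\<\hh^{*k}\>_{1,1}.$$

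\emph{Replacing $\hh$ by $\widetilde\hh$.} When $\fanoind>1$ we have $\widetilde\hh=\hh$ and nothing needs to be done. When $\fanoind=1$ we have $\widetilde\hh=\hh+c\,\mathbf 1$ with $c=\mathbf m!\,q$, so $E=\hh$ while $\widetilde\hh^{*k}$ differs from $\hh^{*k}$ by binomial corrections. The cleanest way to handle this is to show that the whole family of constraints $\{\cG_k(0)=0\}_{k=2}^{\dim X+1}$ is invariant under replacing $\hh$ by $\hh+c\mathbf 1$. On the genus-one side, the string equation gives $\<\mathbf 1\>_{1,1}=0$ at this point, and $\mathbf 1$ acts as the identity under $*$. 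On the trace side, expanding $(\hh+c)^{*a}$ binomially produces lower-$k$ versions of the same expressions. Thus the system for $\widetilde\hh$ is an invertible triangular combination of the systems for $\hh$. Alternatively, one could shift the base point to $t=\tau$ and note that $E$ at $t=\tau$ contributes the term $(1-0)\tau\phi_0$, which gives exactly $\fanoind\widetilde\hh$. I would try this second route first, since it seems to make the replacement automatic: evaluating $\cG_k$ at $t=\tau$ and using the reconstruction proof at base point $\tau$ in place of $0$ should work verbatim.

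\emph{The trace terms.} We have $\E=\fanoind\,\widetilde\hh*$, so the two trace terms carry an overall factor $\fanoind^{k-1}$. For the first trace term we also need the identity $\E^a\mu\E^b\mathbf 1=\fanoind^{k-1}\widetilde\hh^{*a}*\mu(\widetilde\hh^{*b})$. After these identifications, dividing $\cG_k(\tau)=0$ by $\fanoind^k$ yields \eqref{eqn:genus-one-formula}, with the signs coming from moving the trace terms to the right-hand side.

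The main obstacle is the base-point bookkeeping: justifying that checking $\cG_k$ at $\tau$ is equivalent to checking it at $0$, and that $E$ at $\tau$ is $\fanoind\widetilde\hh$. The key input is that $E$ has weight $1$ in the $t^0$ direction; one must check that the reconstruction argument, which used only that $\{E_0^k\}$ spans $H^{\rm amb}(X)$, is unaffected by this shift.
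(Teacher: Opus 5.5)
Your argument is correct, and your preferred route (b) differs from the paper's. The paper's route is essentially your route (a).

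\textbf{The paper's route.} The paper stays at the origin. It writes $E^k_0=\fanoind^k\hh^{*k}$ and expands both $\widetilde\hh^{*k}$ and $\sum_{a+b=k-1}\widetilde\hh^{*a}\otimes\widetilde\hh^{*b}$ binomially in the $E^l_0$. The step you left implicit is the identity $\sum_{a+b=k-1}\binom{a}{i}\binom{b}{j}=\binom{k}{i+j+1}$, which makes the trace sums recombine. With it, \eqref{eqn:genus-one-formula} for $k$ becomes $\sum_{l=0}^{k}\binom{k}{l}(\fanoind\tau)^{k-l}\cG_l(0)=0$. This system is unitriangular, so together with $\cG_0(0)=\cG_1(0)=0$ it is equivalent to $\cG_k(0)=0$ for $2\le k\le\dim X+1$. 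Proposition~\ref{prop:virasoro-reconstruction} then applies exactly as stated.

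\textbf{Your route (b).} You use $E|_{t=\tau}=\tau+\fanoind\hh=\fanoind\widetilde\hh$, which holds because $\tau\neq0$ only when $\fanoind=1$. Then \eqref{eqn:genus-one-formula} is literally $\fanoind^{-k}\cG_k(\tau)=0$, with no binomial bookkeeping. The cost is re-running the reconstruction at base point $\tau$. That does go through: $E^k|_{\tau}=\fanoind^k\widetilde\hh^{*k}=\fanoind^k\hh^k+O(q)$ still spans $H^{\rm amb}(X)$, and $\cG_1(\tau)=0$.

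\textbf{How the two routes relate.} They express the same identity. The $n=0$ case of \eqref{eqn:Get-Vir} gives $\partial_{t^0}\cG_k=k\,\cG_{k-1}$ for $k\ge2$. Since $\cG_k$ is polynomial in $t^0$ and $\cG_0\equiv\cG_1\equiv0$, Taylor expansion gives $\cG_k(\tau)=\sum_{l}\binom{k}{l}\tau^{k-l}\cG_l(0)$. This is precisely the paper's triangular relation when $\fanoind=1$. Using this one line resolves your base-point obstacle without touching the proof of the proposition. Your version explains conceptually why $\widetilde\hh$ appears; the paper's avoids restating the proposition at a new base point.
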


\begin{proof}
	Following the notation used in the proof of
	Proposition~\ref{prop:virasoro-reconstruction}, we denote by $E^k_0$ the
	constant part of $E^k$ with respect to the coordinate $ t$. 
	Then the equation $\cG_k(0)=0$ is precisely
	\begin{align*}
		\<E^k_0\>_{1,1}
		=&-\frac{1}{24}\sum_{a+b=k-1}
		\str\big(E^a_0*\mu(E^b_0)* \big) 
		-\frac{1}{4} \sum_{a+b=k-1}
		\str\big(E^a_0* \,\mu\, E^b_0*\,\mu\big).
	\end{align*}
	Since $E|_{ t=0}=\fanoind \hh$, we have
	$E^k_0=\fanoind^{k}\hh^{*k}$. 
	Since $\tau=\delta_{\fanoind,1}{\bf m}!\,q\cdot{\bf 1}$, we identify
	$\tau$ with its scalar coefficient
	$\delta_{\fanoind,1}{\bf m}!\,q$ whenever it appears as a factor in a
	quantum product.
	Hence
	\beq\label{eqn:HinE}
	\widetilde \hh^{*k}
	=\sum_{l=0}^{k}\binom{k}{l}\tau^{k-l}\cdot\hh^{*l}
	=\frac{1}{\fanoind^{k}}\sum_{l=0}^{k}\binom{k}{l}
	\big(\fanoind \tau\big)^{k-l}\cdot E^{l}_0 \, .
	\eeq
	Now we compute
	\[
	\sum_{a+b=k-1}\widetilde\hh^{*a}\otimes \widetilde\hh^{*b}
	=\frac{1}{\fanoind^{k-1}}\sum_{a+b=k-1}\sum_{i=0}^a\sum_{j=0}^{b}
	\binom{a}{i}\binom{b}{j}
	(\fanoind \tau)^{k-1-i-j}E^{i}_0\otimes E^j_{0}.
	\]
	Exchanging the order of summation, for fixed $i,j$, we use the identity
	\[
	\sum_{a+b=k-1}\binom{a}{i}\binom{b}{j}
	=\binom{k}{i+j+1}.
	\]
	Therefore,
	\beq\label{eqn:HHinEE}
	\sum_{a+b=k-1}\widetilde\hh^{*a}\otimes \widetilde\hh^{*b}
	=\frac{1}{\fanoind^{k-1}}\sum_{l=0}^{k}\binom{k}{l}
	(\fanoind \tau)^{k-l}\sum_{i+j=l-1}E^{i}_0\otimes E^j_{0}.
	\eeq
	By equations~\eqref{eqn:HinE} and~\eqref{eqn:HHinEE},
	equation~\eqref{eqn:genus-one-formula} is equivalent to
	\[
	\sum_{l=0}^{k}\binom{k}{l}(\fanoind \tau)^{k-l}\cG_l(0)=0.
	\]
	Thus, for each $k=0,\cdots,\dim X+1$, the validity of
	$\cG_l(0)=0$ for $l=0,\dots,k$ implies the validity of
	equation~\eqref{eqn:genus-one-formula}. Conversely, since the above
	relation is triangular with leading coefficient $1$, the validity of
	equation~\eqref{eqn:genus-one-formula} for $k=0,\dots,l$ implies
	$\cG_l(0)=0$.
	
	Moreover, the $k=0,1$ cases of equation~\eqref{eqn:genus-one-formula}
	hold since $\cG_0(0)=\cG_1(0)=0$. Hence the conditions
	\eqref{eqn:genus-one-formula} for $k=2,\dots,\dim X+1$ are equivalent to
	$\cG_k(0)=0$ for $k=2,\dots,\dim X+1$. The corollary then follows from
	Proposition~\ref{prop:virasoro-reconstruction}.
\end{proof}

\subsection{Comparison formula}
\label{sec:comparison_formula}
In this subsection, we derive a comparison formula between the genus-one Gromov--Witten invariants of $X$ and the corresponding equivariant twisted invariants. 
The formula follows by combining the one-point wall-crossing formula with Quantum Lefschetz for all-light quasimaps.
The degree-zero equivariant twisted invariants need not be regular at $(\lambda,\rho)=(0,0)$. 
By convention, we set the non-equivariant limit of such an invariant to be zero whenever it is homogeneous of positive degree in $\lambda$ and $\rho$.

\begin{theorem}[Comparison formula]\label{thm:GW-tw}
	For $k\geq2$, we have
	\beq\label{eqn:GW-tw}
	\begin{split}
	&\, \<\widetilde\hh^{*k}\>_{1,1}
	-\frac{1}{24}\sum_{a+b=k-1}\str\bigl(\widetilde\hh^{*a}*q\tfrac{d}{dq}\big(\widetilde\hh^{*b}\big)*\bigr)\\
	=& \, \lim_{\substack{\rho,\lambda\to0}}
	\biggl(\<\widetilde\hh^{\twqp k}\>^{\rm tw}_{1,1}
	-\frac{1}{24}\sum_{a+b=k-1}\tr\bigl(\widetilde\hh^{\twqp a}\twqp 
	q\tfrac{d}{dq}\big(\widetilde\hh^{\twqp b}\big)\twqp\bigr)
	\biggr),
	\end{split}
	\eeq
	where $\twqp=*_{\tw}$.
\end{theorem}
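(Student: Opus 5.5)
The plan is to compute the same all-light quasimap invariant $\langle \lh^k\rangle^{\epsilon=0+}_{1,|1}$ in two ways: once in the theory of $X$ and once in the equivariant $\mathbb E$-twisted theory of $\mathbb P^{N-1}$. The two quasimap sides will be related by Quantum Lefschetz in genus one, and each will be related to Gromov--Witten ancestors by the one-point wall-crossing formula~\eqref{eqn:onepointwc-final}. Concretely, I would first rewrite the right-hand sides $\langle T_k(\bar\psi)\rangle^{\infty,X,\tau}_{1,1}$ and $\langle T^{\tw}_k(\bar\psi)\rangle^{\infty,\mathbb P,\tw,\tau}_{1,1}$ in closed form.

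\emph{Step 1: ancestors on $\overline{M}_{1,1}$.} The ancestor class $\bar\psi$ is pulled back from $\overline{M}_{1,1}$, so $\bar\psi^{\,j}=0$ for $j\geq 2$. Moreover $\bar\psi=\frac1{24}[\delta_0]$ there. Splitting the non-separating node, we get for any $v$
\[
\langle v\,\bar\psi\rangle^{\tau}_{1,1}=\tfrac1{24}\sum_{e}\langle v,e,e^\vee\rangle^{\tau}_{0,3}=\tfrac1{24}\str(v*),
\]
where $e$ runs over a basis of the full $H^*(X)$, including $H^{\rm pri}(X)$; this is why a supertrace appears. The same computation in the twisted theory gives $\frac1{24}\tr(v\twqp)$, since $H^*(\mathbb P^{N-1})$ is even. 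Writing $T_k(z)=T_k^{(0)}+zT_k^{(1)}+O(z^2)$, we therefore obtain
\[
\langle T_k(\bar\psi)\rangle_{1,1}=\langle T_k^{(0)}\rangle_{1,1}+\tfrac1{24}\str\big(T_k^{(1)}*\big),
\]
and the analogous identity in the twisted theory.

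\emph{Step 2: solving the recursion to first order in $z$.} From~\eqref{eqn:Tk-recursion}, the constant terms satisfy $T_k^{(0)}=\widetilde\hh*T_{k-1}^{(0)}$, so $T_k^{(0)}=\widetilde\hh^{*k}$. The linear terms satisfy
\[
T_k^{(1)}=\widetilde\hh*T_{k-1}^{(1)}-D\big(\widetilde\hh^{*(k-1)}\big),
\]
and hence by induction
\[
T_k^{(1)}=-\sum_{a+b=k-1}\widetilde\hh^{*a}*D\big(\widetilde\hh^{*b}\big).
\]
Inserting this into Step 1 yields exactly the left-hand side of~\eqref{eqn:GW-tw} for $X$, and the bracketed expression on the right-hand side for the twisted theory. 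Thus the theorem reduces to
\[
\langle \lh^k\rangle^{0+,X}_{1,|1}=\lim_{\rho,\lambda\to0}\langle \lh^k\rangle^{0+,\mathbb P,\tw}_{1,|1}.
\]

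\emph{Step 3: Quantum Lefschetz for the all-light theory.} For nonzero degree $d$, a genus-one, one-light-point, $\epsilon=0^+$ stable quasimap has domain either irreducible or a cycle of rational curves, with $L$ of nonnegative degree everywhere. Hence $H^1(C,u^*\mathbb E)=0$, the complex $R\pi_*u^*\mathbb E$ is a vector bundle, and the standard functoriality argument identifies $e(R\pi_*u^*\mathbb E)\cap[Q_{1,0|1}(\mathbb P^{N-1},d)]^{\rm vir}$ with the pushforward of $[Q_{1,0|1}(X,d)]^{\rm vir}$. The non-equivariant limit then exists and equals the $X$-invariant, degree by degree in $q$.

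\emph{Main obstacle: the degree-zero terms.} Here the moduli space is essentially $\overline{M}_{1,1}\times[\mathbb C^N/\mathbb C^*]$, restricted to the locus of constant maps. The bundle $R^1\pi_*$ is then nonzero (Hodge bundle $\otimes\,\mathbb E$), so Lefschetz fails. I would compute both degree-zero contributions explicitly, in the form $\int_{\overline M_{1,1}\times Y}$ of a $\lambda_1$-type expression. On the $X$ side this is a classical $-\frac1{24}\int_X\lh^k c_{\dim X-1}(T_X)$-type term. On the twisted side it is a rational function of $(\lambda,\rho)$. The aim is to check that the twisted contribution is a homogeneous piece of positive degree in $\lambda,\rho$, which the stated convention sends to zero, plus a regular part whose limit matches the $X$ side.

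If that degree-zero matching is awkward, I would instead reorganize so that degree-zero terms are compared directly on both sides of~\eqref{eqn:GW-tw}. Since $\tau$ is a multiple of $\mathbf 1$, the string equation lets me strip off $\tau$. It then suffices to compare the $q^0$ parts of the two sides, which are classical expressions in $\hh$, $c(T_X)$ and $\mu$. These can be checked by a direct degree-zero computation, with the twisted side reducing via $c(T_X)=c(T_{\mathbb P})/c(\mathbb E)$ in the limit.
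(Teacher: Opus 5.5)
Your proposal follows the paper's proof: truncate $T_k(z)$ at first order using $\bar\psi^2=0$ on $\overline M_{1,1}$, evaluate $\<v\bar\psi\>_{1,1}=\frac1{24}\str(v*)$ (resp.\ $\frac1{24}\tr(v\twqp)$), and solve the recursion to get $T_k(z)=\widetilde\hh^{*k}-z\sum_{a+b=k-1}\widetilde\hh^{*a}*D(\widetilde\hh^{*b})+O(z^2)$. You then compare the two all-light invariants degree by degree, using genus-one Lefschetz for $d>0$.

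The degree-zero step you leave open closes by a dimension count, as in the paper.
\begin{itemize}
\item Your formula $-\frac1{24}\int_X\hh^k c_{\dim X-1}(T_X)$ vanishes for $k\ge2$, since the integrand has degree $k+\dim X-1>\dim X$.
\item The twisted degree-zero term is homogeneous of degree $k-1>0$ in $(\lambda,\rho)$. This follows from virtual dimension $1$, an insertion of degree $k$, and a twisting class of virtual rank $0$.
\item So there is no ``regular part'' to match: the paper's convention sets the whole twisted degree-zero term to zero.
\end{itemize}

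Drop your fallback. In degree zero $R^1\pi_*u^*\mathbb E\neq0$, so the twisted term is not obtained from the $X$ term via $c(T_X)=c(T_{\mathbb P})/c(\mathbb E)$. It also contains pieces such as $\sum_\alpha\lambda_\alpha^k/(m_i\lambda_\alpha-\rho)$ that are not regular at the origin, so only the convention gives it a value there.
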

\begin{proof}
	By the ancestor one-point wall-crossing formula
	\eqref{eqn:onepointwc-final}, only the terms of $T_k(z)$ of degree at
	most one in $z$ contribute in genus one with one ancestor marking,
	since $\bar\psi^2=0$ on $\overline M_{1,1}$. The recursion
	\eqref{eqn:Tk-recursion} gives
	\[
	T_k(z)=\widetilde\hh^{*k}
	-z\sum_{a+b=k-1}
	\widetilde\hh^{*a}*D(\widetilde\hh^{*b})+O(z^2).
	\]
	The genus-one ancestor topological recursion relation gives
	\(
	\<v\bar\psi\>_{1,1}=\frac{1}{24}\str(v*)
	\).
	It follows that
	\[
	\<\lh^k\>^{\epsilon=0+,X}_{1,|1}
	=
	\<\widetilde\hh^{*k}\>_{1,1}
	-\frac{1}{24}\sum_{a+b=k-1}
	\str\bigl(\widetilde\hh^{*a}*D(\widetilde\hh^{*b})*\bigr).
	\]
	The analogous identity holds in the equivariant twisted theory, with
	$*$ and $\str$ replaced by $\twqp$ and $\tr$, respectively.
	
	We compare the two all-light invariants degree by degree. Denote by
	$\<\lh^k\>^{\epsilon=0+,X}_{1,|1,d}$ and
	$\<\lh^k\>^{\epsilon=0+,\mathbb P,\tw}_{1,|1,d}$
	their degree-$d$ parts, respectively. For every $d>0$, the genus-one
	Quantum Lefschetz principle for all-light quasimaps, as explained in
	the Introduction, gives
	\[
	\<\lh^k\>^{\epsilon=0+,X}_{1,|1,d}
	=
	\lim_{\rho,\lambda\to0}
	\<\lh^k\>^{\epsilon=0+,\mathbb P,\tw}_{1,|1,d}.
	\]
	In degree zero, Quantum Lefschetz does not apply directly. 
	The dimension constraint shows that the untwisted degree-zero contribution vanishes for $k\geq2$. 
	On the twisted side, the degree-zero contribution is homogeneous of degree $k-1>0$ in $\lambda$ and $\rho$, 
	and hence its non-equivariant limit vanishes by the convention above.
	Summing over all degrees and combining with the two wall-crossing identities above proves~\eqref{eqn:GW-tw}.
\end{proof}

\subsection{Genus one computations for the twisted theory}
The comparison formula reduces the required Gromov--Witten computations 
to explicit calculations in the twisted theory. 
In practice, the latter can be carried out by equivariant localization and the $R$-matrix formalism. 
Before turning to the general consequence needed for the Virasoro constraints, 
we illustrate this comparison in the case of the cubic threefold.
The detailed twisted theory computations used below are explained in the Appendix.
\begin{example}[$X_3\subset \mathbb P^4$]
	Let $X=X_3\subset \mathbb P^4$. Then $\dim X=3$, $\fanoind=2$,
	$\widetilde\hh=\hh$, and
	$$
	H^{\rm amb}(X)=\sspan\{{\bf 1},\hh,\hh^2,\hh^3\},\qquad
	H^{\rm pri}(X)=H^{1,2}(X)\oplus H^{2,1}(X),
	$$
	with $\dim H^{1,2}(X)=\dim H^{2,1}(X)=5$. 
	The operator $\mu$ satisfies
	$\mu(\hh^a)=(a-\frac{3}{2})\hh^a$ for $a=0,\ldots,3$ on
	$H^{\rm amb}(X)$, while $\mu=-\frac{1}{2}$ on $H^{1,2}(X)$ and
	$\mu=\frac{1}{2}$ on $H^{2,1}(X)$.
	
	On the twisted side, take equivariant parameters $\rho$ and
	$\lambda_i=\xi^i\lambda$, $i=0,\cdots,4$, where $\xi=e^{2\pi i/5}$.
	The small quantum powers are
	$$
	\hh^{\twqp 0}={\bf 1},\quad
	\hh^{\twqp 1}=\hh,\quad
	\hh^{\twqp 2}=\hh^2+6q,\quad
	\hh^{\twqp 3}=\hh^3+21q\hh+11q\rho,
	$$
	and
	$$
	\hh^{\twqp 4}
	=\hh^4+27q\hh^2+25q\rho\hh+6q(\rho^2+27q).
	$$
	The trace term
	$-\frac{1}{24}\sum_{a+b=k-1}\tr\big(\hh^{\twqp a}\twqp
	D(\hh^{\twqp b})\twqp\big)$ is zero for $k=0,1,2$, and for $k=3$ equals
	$$
	-\frac{1}{24}\sum_{a+b=2}
	\tr\big(\hh^{\twqp a}\twqp D(\hh^{\twqp b})\twqp\big)
	=-\frac{5q}{4},
	$$
	where $D=q\frac{d}{dq}$. 
	Moreover, the genus-one twisted invariants are
	{
		\begin{align*}
			\<\hh^{\twqp 0}\>^{\rm tw}_{1,1}
			&=\frac{5\rho^4}{24(\rho^5+243\lambda^5)},\qquad
			\<\hh^{\twqp 1}\>^{\rm tw}_{1,1}
			=-\frac{10\rho^5+2025\lambda^5}{24(\rho^5+243\lambda^5)},\\
			\<\hh^{\twqp 2}\>^{\rm tw}_{1,1}
			&=-\frac{45\lambda^5\rho}{8(\rho^5+243\lambda^5)},\qquad
			\<\hh^{\twqp 3}\>^{\rm tw}_{1,1}
			=-\frac{62q\rho^5-15\lambda^5\rho^2+15066\lambda^5q}
			{8(\rho^5+243\lambda^5)}.
		\end{align*}
	}
	
	On the Gromov--Witten side, $\hh*\gamma=0$ for any
	$\gamma\in H^{\rm pri}(X)$, and the small quantum powers are
	$$
	\hh^{*0}={\bf 1},\quad
	\hh^{*1}=\hh,\quad
	\hh^{*2}=\hh^2+6q,\quad
	\hh^{*3}=\hh^3+21q\hh,\quad
	\hh^{*4}=27q\hh^2+162q^2.
	$$
	The corresponding supertrace term is zero for $k=0,1,2$. For $k=3$,
	using $\str({\bf 1}*)=4-10=-6$ (this differs from the twisted side, where
	$\tr({\bf 1}\twqp)=5$), it is
	$$
	-\frac{1}{24}\sum_{a+b=2}
	\str\big(\hh^{*a}*D(\hh^{*b})*\big)
	=\frac{3q}{2}.
	$$
	
	By the string equation, $\<{\bf 1}\>_{1,1}=0$. The $L_0$-constraint gives
	$$
	\<\hh\>_{1,1}
	=-\frac{1}{48}\cdot \Big(-\frac{3}{2}\Big)\cdot \str({\bf 1}*)
	-\frac{1}{8}\str(\mu^2)
	=-\frac{3}{16}-\frac{5}{16}
	=-\frac{1}{2}.
	$$
	We also have $\<\hh^2\>_{1,1}=0$ by dimension reasons.
	
	Finally, applying the comparison formula for $k=3$ gives
	$$
	\<\hh^{*3}\>_{1,1}+\frac{3q}{2}
	=
	\lim_{{\rho,\lambda\to 0}}
	\left(
	-\frac{62q\rho^5-15\lambda^5\rho^2+15066\lambda^5q}
	{8(\rho^5+243\lambda^5)}
	-\frac{5q}{4}
	\right)
	=-9q.
	$$
	Therefore
	$$
	\<\hh^3\>_{1,1}
	=\<\hh^{*3}\>_{1,1}-21q\<\hh\>_{1,1}
	=-9q-\frac{3q}{2}+\frac{21q}{2}
	=0.
	$$
	These computations show explicitly how the genus-one Gromov--Witten invariants
	are recovered from the twisted theory after taking the non-equivariant limit,
	and illustrate the verification of the initial conditions
	\eqref{eqn:genus-one-formula} in this example.
\end{example}

In the general proof of the genus-one Virasoro conjecture, however, only the following identity is needed.

\begin{proposition}\label{prop:lim-vira-tw}
	For $k=2,3,\cdots,\dim X+1$, 
	\begin{align}
		&\lim_{\substack{\rho,\lambda\to 0}}\bigg(\<\widetilde{\hh}^{\twqp k}\>^{\rm tw}_{1,1} -\frac{1}{24}\sum_{a+b=k-1}\tr\big(\widetilde{\hh}^{\twqp a}\twqp q\tfrac{d}{dq}(\widetilde{\hh}^{\twqp b})\twqp \big)\bigg)\nonumber\\
		=&-\frac{k\, (k-1-\dim X)}{48\,\fanoind}\tr_{\rm amb}\big(\widetilde{\hh}^{*(k-1)}*\big)
		-\frac{1}{4\,\fanoind}\sum_{a+b=k-1}\tr_{\rm amb}\big(\widetilde{\hh}^{*a}* \,\mu\, \widetilde{\hh}^{*b}*\,\mu\big).\label{eqn:tw-lim}
	\end{align}
	Here $\tr_{\rm amb}(A)$ denotes the trace of the operator $A$ on the ambient space.
\end{proposition}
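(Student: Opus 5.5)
The strategy is to exploit semisimplicity of the equivariant twisted theory of $\mathbb P^{N-1}$. After localization its quantum product at $\tau$ is semisimple, with $N$ idempotents $e_\alpha$ deforming the fixed points $p_\alpha$, where $\hh|_{p_\alpha}=\lambda_\alpha$. For such a theory the genus-one Virasoro constraints are known to hold: Dubrovin--Zhang, Givental's formula $F_1=\frac{1}{2}\sum_\alpha \log\Delta_\alpha^{-1/2}\ldots$ together with the $G$-function, and Teleman's theorem. Because the twisted state space is the full $H^*_T(\mathbb P^{N-1})$ and has no odd part, $\str=\tr$. The twisted theory is not conformal in the usual sense, since the equivariant parameters $\lambda,\rho$ carry degree. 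We therefore use the Euler field augmented by $\sum\lambda_i\partial_{\lambda_i}+\rho\partial_\rho$.

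\textbf{Step 1.} Write the genus-one $L_{k-1}$-constraint for the twisted theory at $t=\tau$, in the form \eqref{eqn:vira-genus-one} with the Euler vector field $E^{\tw}=c_X\hh+(\text{equivariant corrections})$. Then rewrite it, exactly as in the proof of Corollary~\ref{cor:vira-initial}, as an identity expressing $\<\widetilde\hh^{\twqp k}\>^{\tw}_{1,1}$ through traces of $\widetilde\hh^{\twqp a}\twqp\mu^{\tw}\widetilde\hh^{\twqp b}\twqp\mu^{\tw}$ and $(\widetilde\hh^{\twqp a}\twqp\mu^{\tw}(\widetilde\hh^{\twqp b}))\twqp$. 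The equivariant parameters enter as extra terms involving $\lambda\partial_\lambda+\rho\partial_\rho$ applied to $F_1$. Alternatively, one can avoid the Virasoro formalism and compute $\<\widetilde\hh^{\twqp k}\>^{\tw}_{1,1}$ directly from Givental's genus-one formula $dF_1=\sum_\alpha\frac{1}{48}d\log\Delta_\alpha+\frac12\sum (R_1)_{\alpha\alpha}du^\alpha$. I would use this route, since the Appendix is advertised to supply the twisted computations.

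\textbf{Step 2.} Add the trace term $-\frac1{24}\sum\tr(\widetilde\hh^{\twqp a}\twqp D(\widetilde\hh^{\twqp b})\twqp)$. Using quasi-homogeneity (Lemma~\ref{lem:hom-quantum-product} in its twisted form, where $D$ equals $c_X^{-1}$ times the Euler derivative minus the $\lambda,\rho$ scaling), this combines with the $\mu$-trace term to give $-\frac{k(k-1-\dim X)}{48c_X}\tr(\widetilde\hh^{\twqp(k-1)}\twqp)$ plus a remainder. That remainder is homogeneous of positive degree in $(\lambda,\rho)$ or involves $\lambda\partial_\lambda,\rho\partial_\rho$ derivatives.

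\textbf{Step 3.} Take the non-equivariant limit. The twisted quantum product $\twqp$ converges, on the image of the Euler-class map, to the ambient quantum product $*$ of $X$: this is genus-zero quantum Lefschetz, where multiplication by $e(\mathbb E)$ intertwines the two products. The traces over the $N$-dimensional twisted space must then be converted into $\tr_{\rm amb}$ over the $(\dim X+1)$-dimensional ambient space. The extra $N-1-\dim X=r$ directions are spanned by $\hh^{\dim X+1},\dots,\hh^{N-1}$, which lie in the kernel of $e(\mathbb E)\cup$ in the limit. The $\mu^{\tw}$-grading of $\mathbb P^{N-1}$ is shifted by $r/2$ relative to $\mu$ of $X$; expanding $\mu^{\tw}=\mu+\frac r2$ on the ambient block produces the $\frac{k(k-1-\dim X)}{48}$-type linear correction after using $\tr_{\rm amb}$-cyclicity.

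\textbf{Main obstacle.} The hardest step is controlling the singular limit. Individual terms, such as those in the cubic threefold example with denominators $\rho^5+243\lambda^5$, have no limit separately, because degree-zero and $\lambda$-dependent pieces do not vanish termwise. I would first restrict to the diagonal specialization $\lambda_i=\xi^i\lambda$, where everything depends only on $\rho^{\,}$ and $\lambda^N$. I would then show that the remainder from Step~2 is a quasi-homogeneous rational function of total degree $0$ whose contributions from the $r$ non-ambient directions cancel. Failing a structural argument, I would evaluate it explicitly via the Appendix's $R$-matrix formulas and check that the limit vanishes.
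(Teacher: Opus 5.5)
Your architecture (Givental's genus-one formula for the semisimple twisted theory, quasi-homogeneity for $E+\Lambda$ with $\Lambda=\rho\partial_\rho+\lambda\partial_\lambda$, the specialization $\lambda_i=\xi^i\lambda$, then the non-equivariant limit) is the one the paper uses. But the proposal stops exactly where the proposition has content. The statement \emph{is} the claim that whatever survives Step~2 has zero limit, and you leave this as ``show it cancels, or else compute and check''.

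Neither positive degree in $(\lambda,\rho)$ nor the presence of $\Lambda$-derivatives forces a zero limit. These are rational functions that are singular at the origin: $M=\sum_i m_i$ of the eigenvalues $x_\alpha$ of $\widetilde\hh\twqp$ tend to $0$, and $\Psi$ and $R_1^*$ degenerate. Nor is the mechanism a cancellation among the $r$ non-ambient directions. Two things are missing.

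First, an exact identification of the remainder. Givental's formula produces $\mathcal V=\fanoind[R_1^*,\hh\twqp]$ rather than $\mu$. Quasi-homogeneity gives $\mathcal V=\mu-\Psi^{-1}\Lambda(\Psi)$ in the flat basis. After writing $\mathcal V=\mu+(\mathcal V-\mu)$ the mixed terms cancel, leaving exactly $\frac{1}{4\fanoind}\sum_{a+b=k-1}\tr\bigl(\widetilde\hh^{\twqp a}\twqp(\mathcal V-\mu)\widetilde\hh^{\twqp b}\twqp(\mathcal V-\mu)\bigr)$, $-\frac12\tr\bigl(\widetilde\hh^{\twqp k}\twqp\Lambda(R_1^*)\bigr)$ and $\frac{1}{24\fanoind}\sum_{a+b=k-1}\sum_\alpha\tr\bigl(\widetilde\hh^{\twqp a}\twqp\Lambda(x_\alpha^b\Psi^{\bar\alpha}_0)\bar e_\alpha\twqp\bigr)$.

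Second, a proof that each of these vanishes, which is an eigenvalue analysis over all $N$ idempotents. With $\rho=0$ first, $\Lambda(x_\alpha)=Nx_\alpha(x_\alpha^{\fanoind}-{\bf m}^{\bf m}q)/(Nx_\alpha^{\fanoind}-M{\bf m}^{\bf m}q)$. This tends to $0$ on the $\fanoind$ roots with $x_\alpha^{\fanoind}\to{\bf m}^{\bf m}q$. On the $M$ roots tending to $0$, the ratios $\Lambda(x_\alpha)/x_\alpha$ and $x_\alpha x_\beta/(x_\alpha-x_\beta)^2$ stay bounded, and the factor $x_\alpha^a x_\beta^b$ with $a+b=k-1\geq1$ kills the contribution; this uses the canonical-basis entries of $\mathcal V-\mu$ obtained by Lagrange interpolation. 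For $\Lambda(R_1^*)$ you need the explicit Picard--Fuchs solution for $(R_1^*)^\alpha_0$. That isolates its only singular part, a constant multiple of $\sum_\alpha x_\alpha^{-1}e_\alpha$, and you must check it contributes nothing to the relevant diagonal entry when $k\geq2$.

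Step~3 also contains an error and a further gap. There is no $r/2$ shift. The twisted pairing $\int_{\mathbb P^{N-1}}\hh^a\hh^b\,e(\mathbb E)$ has degree $N-1-r=\dim X$, so the twisted grading operator must be $\mu(\hh^a)=(a-\frac{\dim X}{2})\hh^a$; otherwise the quasi-homogeneity of $\eta$ you rely on in Step~2 fails. This operator already agrees with $\mu$ of $X$ on $\hh^0,\ldots,\hh^{\dim X}$. The term $-\frac{k(k-1-\dim X)}{48\fanoind}\tr(\cdot)$ comes only from Step~2, via $\sum_{a+b=k-1}(b-\frac{\dim X}{2})=\frac{k(k-1-\dim X)}{2}$. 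Producing it again from a shift double counts, and an actual shift would introduce spurious terms.

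Moreover, to replace $\tr$ by $\tr_{\rm amb}$ in the two surviving terms, it is not enough that $\hh^{\dim X+1},\ldots,\hh^{N-1}$ are killed by $j^*$ (or by $e(\mathbb E)\cup$) in the limit. Genus-zero quantum Lefschetz identifies the block on $\sspan\{\hh^0,\ldots,\hh^{\dim X}\}$ with the ambient product $*$. You must also show that $\widetilde\hh\twqp$ preserves the span $K$ of the extra classes and acts on it by strictly raising the $\hh$-degree. Then $\widetilde\hh^{\twqp(k-1)}\twqp$ and $\widetilde\hh^{\twqp a}\twqp\mu\,\widetilde\hh^{\twqp b}\twqp\mu$ (with $a+b\geq1$) have zero trace on $K$. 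This is the paper's Lemma~\ref{lem:QP-ext}, $\lim\widetilde\hh\twqp\hh^{j}=\hh^{j+1}$ for $j\geq N-r$, proved from the reduced Picard--Fuchs equation at $\lambda=\rho=0$. Your proposal contains no substitute for it.
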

\begin{proof}
	We give a sketch of the proof using the notation introduced in
	Appendix~\ref{sec:CohFT-comp-tw}; the details are provided there.
	The Givental--Teleman reconstruction theorem for semisimple
	cohomological field theories, together with the quasi-homogeneity
	of the equivariant twisted Gromov--Witten theory, gives
	\begin{align}
		\<\widetilde{\hh}^{\twqp k}\>^{\tw}_{1,1}
		={}&-\frac{1}{24\fanoind}
		\sum_{a+b=k-1}
		\tr\bigl(
		\widetilde{\hh}^{\twqp a}\twqp
		\mathcal V(\widetilde{\hh}^{\twqp b})\twqp
		\bigr)
		-\frac{1}{4\fanoind}
		\sum_{a+b=k-1}
		\tr\bigl(
		\widetilde{\hh}^{\twqp a}\twqp
		\mu\widetilde{\hh}^{\twqp b}\twqp\mu
		\bigr)\nonumber\\
		&+\frac{1}{4\fanoind}
		\sum_{a+b=k-1}
		\tr\bigl(
		\widetilde{\hh}^{\twqp a}\twqp
		(\mathcal V-\mu)\widetilde{\hh}^{\twqp b}\twqp
		(\mathcal V-\mu)
		\bigr)
		-\frac{1}{2}
		\tr\bigl(
		\widetilde{\hh}^{\twqp k}\twqp\Lambda(R_1^*)
		\bigr),
	\end{align}
	where $R_1^*$ is represented with respect to the flat basis.
	A further application of quasi-homogeneity gives
	\begin{align}
		&\<\widetilde{\hh}^{\twqp k}\>^{\tw}_{1,1}
		-\frac{1}{24}\sum_{a+b=k-1}
		\tr\bigl(
		\widetilde{\hh}^{\twqp a}\twqp
		q\tfrac{d}{dq}(\widetilde{\hh}^{\twqp b})\twqp
		\bigr)\nonumber\\
		={}&-\frac{k(k-1-\dim X)}{48\fanoind}
		\tr\bigl(
		\widetilde{\hh}^{\twqp(k-1)}\twqp
		\bigr)
		-\frac{1}{4\fanoind}
		\sum_{a+b=k-1}
		\tr\bigl(
		\widetilde{\hh}^{\twqp a}\twqp
		\mu\widetilde{\hh}^{\twqp b}\twqp\mu
		\bigr)\nonumber\\
		&+\frac{1}{4\fanoind}
		\sum_{a+b=k-1}
		\tr\bigl(
		\widetilde{\hh}^{\twqp a}\twqp
		(\mathcal V-\mu)\widetilde{\hh}^{\twqp b}\twqp
		(\mathcal V-\mu)
		\bigr)
		-\frac{1}{2}
		\tr\bigl(
		\widetilde{\hh}^{\twqp k}\twqp\Lambda(R_1^*)
		\bigr)\nonumber\\
		&+\frac{1}{24\fanoind}
		\sum_{a+b=k-1}\sum_\alpha
		\tr\bigl(
		\widetilde{\hh}^{\twqp a}\twqp
		\Lambda(x_\alpha^b\Psi^{\bar\alpha}_0)
		\bar e_\alpha\twqp
		\bigr).
		\label{eqn:vira-genus-one-w-tw}
	\end{align}
	By Proposition~\ref{prop:twisted-trace-limits}, the first two
	terms on the right-hand side have the corresponding ambient limits,
	while the remaining three terms vanish. Taking the non-equivariant
	limit in~\eqref{eqn:vira-genus-one-w-tw} therefore gives
	\eqref{eqn:tw-lim}.
\end{proof}
\begin{remark}
	The first two steps of the proof are formal consequences of
	semisimple reconstruction and quasi-homogeneity.
	Only the final non-equivariant limit argument uses explicit computations in the
	twisted theory.
\end{remark}

\subsection{Finishing the proof of Theorem~\ref{mainthm:genus-one-virasoro}}

We now complete the proof of the genus-one ambient Virasoro conjecture.
\begin{theorem}[=Theorem~\ref{mainthm:genus-one-virasoro}]
	Let $X\subset\mathbb P^{N-1}$ be a smooth Fano complete intersection.
	Then the genus-one Virasoro conjecture
	\eqref{eqn:vira-genus-one} holds for every
	$t\in H^{\rm amb}(X)$.
\end{theorem}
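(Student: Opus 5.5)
We combine Corollary~\ref{cor:vira-initial}, Theorem~\ref{thm:GW-tw} and Proposition~\ref{prop:lim-vira-tw}. By Corollary~\ref{cor:vira-initial} it suffices to verify \eqref{eqn:genus-one-formula} for $k=2,\dots,\dim X+1$. The right-hand side of Theorem~\ref{thm:GW-tw} is evaluated by Proposition~\ref{prop:lim-vira-tw}. So the whole task is to rewrite the left-hand side of \eqref{eqn:GW-tw} in the shape of \eqref{eqn:genus-one-formula}, and then to control the discrepancy between $\str$ and $\tr_{\rm amb}$.

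\textbf{Step 1 (trading $q\frac{d}{dq}$ for $\mu$).} At $t=0$ the Euler field acts on coefficient functions of ambient classes as $\fanoind\, q\frac{d}{dq}$, by the divisor equation \eqref{eqn:divisor}. Moreover $E(\hh)=0$ and $(\mu+\frac{\dim X}{2})\hh=\hh$, so $\mathfrak D_E(\hh)=\hh$. For $\fanoind=1$ we also have $\mathfrak D_E(\tau)=\fanoind D(\mathbf m!\,q)=\tau$. Hence $\mathfrak D_E(\widetilde\hh)=\widetilde\hh$ in all cases. Lemma~\ref{lem:hom-quantum-product} then gives $\mathfrak D_E(\widetilde\hh^{*b})=b\,\widetilde\hh^{*b}$, that is,
\[
\fanoind\, D(\widetilde\hh^{*b})=\bigl(b-\tfrac{\dim X}{2}\bigr)\widetilde\hh^{*b}-\mu(\widetilde\hh^{*b}).
\]
Substituting this and using $\sum_{b=0}^{k-1}(b-\frac{\dim X}{2})=\frac{k(k-1-\dim X)}{2}$, the left-hand side of \eqref{eqn:GW-tw} becomes
\[
\<\widetilde\hh^{*k}\>_{1,1}+\frac{1}{24\fanoind}\sum_{a+b=k-1}\str\bigl(\widetilde\hh^{*a}*\mu(\widetilde\hh^{*b})*\bigr)-\frac{k(k-1-\dim X)}{48\fanoind}\str\bigl(\widetilde\hh^{*(k-1)}*\bigr).
\]

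\textbf{Step 2 (reducing to a primitive identity).} Equate this expression with the right-hand side of \eqref{eqn:tw-lim}. Then \eqref{eqn:genus-one-formula} holds for $k$ if and only if the primitive contributions vanish:
\[
-\frac{k(k-1-\dim X)}{48\fanoind}\str_{\rm pri}\bigl(\widetilde\hh^{*(k-1)}*\bigr)-\frac{1}{4\fanoind}\sum_{a+b=k-1}\str_{\rm pri}\bigl(\widetilde\hh^{*a}*\mu\,\widetilde\hh^{*b}*\mu\bigr)=0 .
\]
Here $\str_{\rm pri}$ denotes the (signed) trace over $H^{\rm pri}(X)$. This splitting is legitimate because quantum multiplication by an ambient class and $\mu$ both preserve the decomposition $H^{\rm amb}\oplus H^{\rm pri}$: products of ambient classes stay ambient, and the pairing is orthogonal.

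\textbf{Step 3 (the main obstacle: genus-zero primitive input).} Here we invoke Hu's computation \cite{hu2015big} of small quantum multiplication by $\hh$ on $H^{\rm pri}(X)$. The expectation, confirmed by the cubic threefold example above, is that $\widetilde\hh*\gamma=0$ for every primitive $\gamma$ in the non-exceptional case. When $\fanoind=1$ the shift by $\tau$ is exactly what cancels the scalar part $-\mathbf m!\,q\,\gamma$. Granting this, for $k\ge2$ we have $a+b=k-1\ge1$, so every operator $\widetilde\hh^{*a}*\mu\,\widetilde\hh^{*b}*\mu$ and $\widetilde\hh^{*(k-1)}*$ vanishes on $H^{\rm pri}$. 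Hence both primitive traces are zero.

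For the exceptional cases (quadrics and even-dimensional intersections of two quadrics; cubic surfaces are excluded by $\dim X\ge3$), the quantum action on $H^{\rm pri}$ is explicit, and the identity has to be checked directly. One can either use Hu's formulas, or note that these are semisimple and the conjecture is already known by \cite{Tel12}. Checking Hu's formula precisely in the normalization $\widetilde\hh$, including the $\fanoind=1$ mirror shift, is the step I expect to require the most care. With Step 3 in hand, Corollary~\ref{cor:vira-initial} concludes the proof.
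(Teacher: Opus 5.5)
Your proposal is correct and follows essentially the same route as the paper: combine Theorem~\ref{thm:GW-tw} with Proposition~\ref{prop:lim-vira-tw}, use the derivation $(\fanoind D+\mu+\frac{\dim X}{2})(\widetilde\hh^{*b})=b\,\widetilde\hh^{*b}$ to trade $q\frac{d}{dq}$ for $\mu$, and kill the primitive parts of the supertraces using $\widetilde\hh*\gamma=0$ for $\gamma\in H^{\rm pri}(X)$, which the paper takes from \cite{hu2015big} (vanishing of genus-zero invariants with exactly one primitive insertion, plus $\langle\widetilde\hh^{*j},\gamma_1,\gamma_2\rangle_{0,3}=\delta_{j,0}\,\eta(\gamma_1,\gamma_2)$). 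The only difference is organizational: the paper disposes of all exceptional cases at the outset via generic semisimplicity, which is your second option and also covers the Fano surfaces (the theorem does not assume $\dim X\geq 3$).
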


\begin{proof}
	If $X$ is exceptional, its big quantum cohomology is generically semisimple~\cite{BM04,CMP10,Hu21}, 
	and the theorem follows immediately. 
	We may therefore assume that $X$ is non-exceptional.
	
	By Corollary~\ref{cor:vira-initial}, it remains to prove the cases
	$k=2,\cdots,\dim X+1$ of equation~\eqref{eqn:genus-one-formula}.
	Combining the comparison formula \eqref{eqn:GW-tw} with
	Proposition~\ref{prop:lim-vira-tw}, we obtain
	\begin{align}
		&\<\widetilde {\hh}^{*k}\>_{1,1}
		-\frac{1}{24} \sum_{a+b=k-1}
		\str\big(\widetilde {\hh}^{*a}* q\tfrac{d}{dq} (\widetilde {\hh}^{*b})*\big)\nonumber \\
		=&-\frac{k\, (k-1-\dim X)}{48\,\fanoind}
		\tr_{\rm amb}\big(\widetilde{\hh}^{*(k-1)}*\big)
		-\frac{1}{4\,\fanoind}\sum_{a+b=k-1}
		\tr_{\rm amb}\big(\widetilde{\hh}^{*a}* \,\mu\, \widetilde{\hh}^{*b}*\,\mu\big),
		\label{eqn:gw-vira-tw}
	\end{align}
	for $k=2,\cdots,\dim X+1$.
	It remains to show that \eqref{eqn:gw-vira-tw} is equivalent to the
	required identity \eqref{eqn:genus-one-formula}.
	
	Set $\deg=\mu+\frac{\dim X}{2}$.  
	Combining Lemma~\ref{lem:hom-quantum-product} with
	$E|_{t=0}=\fanoind D$, which follows from the divisor equation, we obtain
	$$
	(\fanoind D+\deg)(v_1*v_2)
	=(\fanoind D+\deg)(v_1)*v_2
	+v_1*(\fanoind D+\deg)(v_2).
	$$
	Since $(\fanoind D+\deg)\widetilde\hh=\widetilde \hh$, induction on $k$
	gives
	$
	(\fanoind D+\deg)(\widetilde\hh^{*k})=k\, \widetilde\hh^{*k}.
	$
	This is equivalent to
	\beq\label{eqn:hom-small-quantum-powers}
	q\frac{\pd}{\pd q}(\widetilde{\hh}^{*k})
	=\frac{1}{\fanoind}
	\Big(k-\frac{\dim X}{2}-\mu\Big)(\widetilde{\hh}^{*k}).
	\eeq
	It follows from \eqref{eqn:hom-small-quantum-powers} that
	\beq\label{eqn:phia-D-phib}
	\sum_{a+b=k-1}\widetilde{\hh}^{*a}*q\tfrac{d}{dq}(\widetilde{\hh}^{*b})
	=\frac{k(k-1-\dim X)}{2\, \fanoind}\,
	\widetilde{\hh}^{*(k-1)}
	-\frac{1}{\fanoind}\sum_{a+b=k-1}
	\widetilde{\hh}^{*a}*\mu(\widetilde{\hh}^{*b}).
	\eeq
	Therefore equation~\eqref{eqn:genus-one-formula} can be rewritten in the
	following equivalent form:
	\begin{align}
		&\, \<\widetilde {\hh}^{*k}\>_{1,1}
		-\frac{1}{24} \sum_{a+b=k-1}
		\str\big(\widetilde {\hh}^{*a}*
		q\tfrac{d}{dq} (\widetilde {\hh}^{*b})*\big)\nonumber \\
		=&\, -\frac{k(k-1-\dim X)}{48\,\fanoind}
		\str\big(\widetilde{\hh}^{*(k-1)}*\big)
		-\frac{1}{4\, \fanoind} \sum_{a+b=k-1}
		\str\big(\widetilde{\hh}^{*a}* \,\mu\, \widetilde{\hh}^{*b}*\,\mu\big).
		\label{eqn:vira-genus-one-w}
	\end{align}
	
	Comparing \eqref{eqn:gw-vira-tw} with
	\eqref{eqn:vira-genus-one-w}, it remains to check that the primitive part
	does not contribute to the supertraces appearing on the right-hand side of \eqref{eqn:vira-genus-one-w}.
	Equivalently, we need to prove, for $k=2,\cdots,\dim X+1$, that
	\beq\label{eqn:full-amb-tr-1}
	\str\big(\widetilde{\hh}^{*(k-1)}*\big)
	= \tr_{\rm amb}\big(\widetilde{\hh}^{*(k-1)}*\big),
	\eeq
	and
	\beq\label{eqn:full-amb-tr-2}
	\sum_{a+b=k-1}
	\str(\widetilde{\hh}^{*a}*\, \mu\,  \widetilde{\hh}^{*b}*\, \mu)
	=
	\sum_{a+b=k-1}
	\tr_{\rm amb}(\widetilde{\hh}^{*a}*\, \mu\,  \widetilde{\hh}^{*b}*\, \mu).
	\eeq
	
	We now prove these two identities. By monodromy invariance
	and~\cite{hu2015big}, genus-zero invariants with exactly one primitive
	insertion vanish. Moreover, for
	$\gamma_1,\gamma_2\in H^{\rm pri}(X)$ and $j\geq 0$, one has
	$$
	\<\widetilde{\hh}^{*j},\gamma_1,\gamma_2\>_{0,3}
	=\delta_{j,0}\,\eta(\gamma_1,\gamma_2).
	$$
	These two facts give $\widetilde{\hh}*\gamma=0$ for every
	$\gamma\in H^{\rm pri}(X)$.
	Since $k-1\geq 1$, the primitive contribution to $\str\big(\widetilde{\hh}^{*(k-1)}*\big)$ is zero, and
	\eqref{eqn:full-amb-tr-1} follows.
	
	It remains to prove \eqref{eqn:full-amb-tr-2}. 
	Using the primitive basis $\{\gamma_i\}$ and its dual basis
	$\{\gamma^i\}$ fixed above, the primitive contribution is 
	$$
	\str_{\rm pri}(\widetilde{\hh}^{*a}*\, \mu\,  \widetilde{\hh}^{*b}*\, \mu)
	=\sum_i
	\eta(\gamma_i,
	\widetilde{\hh}^{*a}*\, \mu\,  \widetilde{\hh}^{*b}*\, \mu\gamma^i).
	$$
	The right-hand side can be rewritten as
	$$
	\sum_i
	\eta(\widetilde{\hh}^{*a}*\gamma_i,
	\mu\,  \widetilde{\hh}^{*b}*\, \mu\gamma^i).
	$$
	Since $\mu(H^{\rm pri}(X))\subset H^{\rm pri}(X)$ and
	$a+b=k-1\geq 1$, at least one of $a$ and $b$ is positive. If $a\geq 1$,
	then $\widetilde{\hh}^{*a}*\gamma_i=0$; if $b\geq 1$, then
	$\widetilde{\hh}^{*b}*(\mu\gamma^i)=0$. Therefore the primitive
	contribution vanishes in every summand, proving
	\eqref{eqn:full-amb-tr-2}.
	
	This proves \eqref{eqn:vira-genus-one-w}, and hence
	\eqref{eqn:genus-one-formula}. The theorem now follows from
	Corollary~\ref{cor:vira-initial}.
\end{proof}

\begin{remark}
	By considering monodromy invariants, Hu~\cite{hu2015big,Hu22} obtained a reduction
	structure for the genus-one potential $F_1$.
	This reduction structure naturally applies to the genus-one Virasoro constraints.
	Together with an argument similar to that used in the proof of
	Proposition~\ref{prop:virasoro-reconstruction}, it gives a reconstruction
	procedure for the genus-one Virasoro conjecture on the full cohomology.
	
	The resulting initial conditions are not, in general, purely ambient; they may
	also involve primitive insertions. Nevertheless, in the following cases these
	remaining initial conditions can be reduced to the ambient result:
	\begin{enumerate}
		\item $X_{3}\subset \mathbb P^{n}$, $n\geq 4$ and $n\ne 5$;
		\item $X_{2,2}\subset\mathbb P^{2n+1}$, $n\geq 2$;
		\item Fano complete intersections $X_{\bf m}$ satisfying
		$\gcd(\dim X-2,\fanoind)\ne1$.
	\end{enumerate}
	Therefore, our result implies that the genus-one Virasoro constraints hold
	for the full cohomology in all the above cases. Details will appear in a
	forthcoming paper.
\end{remark}

\begin{appendices}
	
	\addtocontents{toc}{\protect\setcounter{tocdepth}{0}}
	
	\section{Cohomological field theory computations for the twisted theory }
	\label{sec:CohFT-comp-tw}
	In this appendix, we collect the CohFT computations for the
	equivariant twisted theory of $\mathbb P^{N-1}$ associated with the
	Fano complete intersection $X$, and use them to prove
	Proposition~\ref{prop:lim-vira-tw}. 
	
	\noindent {\bf Notation convention.} Except in the final subsection,
	all correlators, the quantum product, the Poincar\'e pairing, and $S$- and $R$-matrices
	are understood in this twisted theory. 
	For simplicity, we omit the corresponding subscript and superscript ``$\tw$".
	
	\subsection{Twisted $I$-function and Picard--Fuchs equation}
	We retain the notation for the equivariant twisted theory introduced in \S \ref{subsec:one-point-wc}. Throughout this appendix, we
	specialize the equivariant parameters of $\mathbb P^{N-1}$ to
	$\lambda_i=\xi^i\lambda$, $i=0,\ldots,N-1$, where
	$\xi=e^{2\pi i/N}$.
	The relation $\prod_i(\hh-\lambda_i)=0$ then specializes to
	$\hh^N=\lambda^N$.
	
	The corresponding twisted $I$-function is
	\begin{align*}
		I^{\lambda,\rho}(q,z):=
		&\ z\sum_{d\geq 0}q^d
		\frac{\prod_{j=1}^r\prod_{k=1}^{m_jd}
			(m_j\hh+kz-\rho)}
		{\prod_{k=1}^{d}((\hh+kz)^N-\lambda^N)}
		=z+\delta_{\fanoind,1}{\bf m}!q+O(z^{-1}).
	\end{align*}
	This $I$-function satisfies the following Picard--Fuchs equation
	\beq\label{eqn:PF-I}
	\Big(D_{\hh}^N-\lambda^N
	-q\prod_{j=1}^r\prod_{k=1}^{m_j}(m_jD_{\hh}+kz-\rho)\Big)I^{\lambda,\rho}(q,z)=0,
	\eeq
	where $D_{\hh}=zD+\hh$ with $D=q\frac{d}{dq}$.
	
	We use the following notation throughout this appendix:
	$$
	W(x)=\prod_{i=1}^r(m_ix-\rho)^{m_i},\qquad
	P(x,q)=x^N-\lambda^N-qW(x).
	$$
	For a function of $x$, primes always denote derivatives with respect to $x$.
	
	\subsection{S-matrix and quantum product}
	The mirror theorem~\cite{Giv96,lian1997mirror} gives
	\(
	I^{\lambda,\rho}(q,-z)
	=-zS^\tau(z)^{-1}{\bf1},
	\)
	where
	$\tau=\delta_{\fanoind,1}{\bf m}!q\cdot{\bf1}$.
	Using the symplectic condition
	$S^{\tau,*}(-z)S^\tau(z)=\id $, where
	$S^{\tau,*}(z)$ denotes the adjoint of $S^\tau(z)$ with respect to
	$\eta$, we obtain
	\[
	I^{\lambda,\rho}(q,z)=zS^{\tau,*}(z){\bf1}.
	\]
	
	The quantum differential equation gives, for any vector field $v$,
	possibly depending on $q$,
	\beq\label{eqn:QDE-S}
	D_{\hh}(S^{\tau,*}(z)v)=S^{\tau,*}(z)\big(\widetilde{\hh}*v+zD(v)\big),
	\eeq
	where \(\widetilde\hh=\hh+\tau\). 
	Starting from $S^{\tau,*}(z){\bf1}=z^{-1}I^{\lambda,\rho}(q,z)$,
	the quantum product can be determined recursively as follows~\cite{chang2018polynomial}.
	Applying \eqref{eqn:QDE-S} to \(v={\bf1}\), we obtain
	$$
	S^{\tau,*}(z){\hh}=D_{\hh}(S^{\tau,*}(z){\bf1})-\tau\cdot S^{\tau,*}(z){\bf1}.
	$$
	Suppose that the formulas for $\widetilde{\hh}*{\hh}^{i}$ and $S^{\tau,*}(z){\hh}^{i+1}$ are known for $i=0,\cdots,k-1$, where $1\leq k\leq N-1$.
	Then we consider
	$$
	S^{\tau,*}(z)(\widetilde{\hh}*{\hh}^k)=D_{\hh}(S^{\tau,*}(z){\hh}^k).
	$$
	Taking the coefficient of \(z^0\) on both sides determines the formula for $\widetilde{\hh}*{\hh}^{k}$. 
	It has the form $\widetilde{\hh}*{\hh}^{k}={\hh}^{k+1}+\sum_{i=0}^{k}a_{k,i}{\hh}^i$ for some coefficients $a_{k,i}=a_{k,i}(q,\lambda,\rho)$.
	Consequently,
	$$
	S^{\tau,*}(z){\hh}^{k+1}
	=D_{\hh}(S^{\tau,*}(z){\hh}^k)
	-\sum_{i=0}^{k}a_{k,i}S^{\tau,*}(z){\hh}^i.
	$$
	By induction, one obtains the matrix of quantum multiplication by \(\widetilde\hh\) with respect to the basis $\{{\hh}^k\}_{k=0}^{N-1}$.
	
	Having determined the quantum multiplication by $\widetilde\hh$,
	taking successive quantum powers gives the transition matrix
	$\mathcal B$ between quantum powers and cup powers:
	\beq\label{eqn:Trans-QH-H}
	\{1,\widetilde\hh,\widetilde\hh^{*2},\cdots,
	\widetilde\hh^{*(N-1)}\}
	=\{1,\hh,\hh^2,\cdots,\hh^{N-1}\}\mathcal B.
	\eeq
	The triangular form of the quantum multiplication shows that
	$\mathcal B$ is unipotent and hence invertible. Under the specialization
	$\lambda_i=\xi^i\lambda$, the $I$-function, and hence the quantum
	product, depends on $\lambda$ only through $\lambda^N$. Moreover,
	$a_{k,i}$ is homogeneous of degree $k+1-i$, where
	$\deg\hh=\deg\lambda=\deg\rho=1$ and $\deg q=\fanoind$.
	Thus $a_{k,i}$ is independent of $\lambda$ for $k\leq N-2$.
	Since only these multiplication formulas are needed to construct
	$\widetilde\hh^{*j}$ for $j=0,\ldots,N-1$, the matrix $\mathcal B$ is
	independent of $\lambda$.

	After localizing the equivariant parameters, the twisted quantum product is semisimple at \(q=0\), and hence in a formal neighborhood of \(q=0\).
	We fix the canonical basis $\{e_\alpha\}_{\alpha=0}^{N-1}$ by requiring the following specialization at $q=0$:
	$$
	e_\alpha|_{q=0}=\frac{\prod_{\beta\ne\alpha}(\hh-\lambda_\beta)}
	{\prod_{\beta\ne\alpha}(\lambda_\alpha-\lambda_\beta)}
	=\frac{\prod_{\beta\ne\alpha}(\hh-\xi^\beta\lambda)}
	{N\xi^{(N-1)\alpha}\lambda^{N-1}},
	\qquad \alpha=0,\ldots,N-1,
	$$
	where $\lambda_\alpha=\xi^\alpha\lambda$.
	It follows from the relation $\prod_\beta(\hh-\lambda_\beta)=0$ that
	$\hh\cup(e_\alpha|_{q=0})=\lambda_\alpha e_\alpha|_{q=0}$, and hence
	$(e_\alpha\cup e_\beta)|_{q=0}=\delta_{\alpha,\beta}e_\alpha|_{q=0}$.
	Recall that
	$$
	\eta(\hh^a,\hh^b)=\int_{\mathbb P^{N-1}}\hh^a\cup\hh^b\cup\Big(\prod_{i=1}^r(m_i\hh-\rho)\Big).
	$$
	Therefore
	\beq\label{eqn:eta-at-q=0}
	\eta(e_\alpha,e_\beta)|_{q=0}
	=\delta_{\alpha,\beta}\cdot
	\frac{\prod_{i=1}^r(m_i\lambda_\alpha-\rho)}
	{\prod_{\gamma\ne\alpha}(\lambda_\alpha-\lambda_\gamma)}.
	\eeq
	Set $\Delta_\alpha^{-1}=\eta(e_\alpha,e_\alpha)$ and $\bar e_\alpha=\Delta_\alpha^{1/2}e_\alpha$.
	Then $\{\bar e_\alpha\}_{\alpha=0}^{N-1}$ is the normalized canonical basis, 
	i.e., $\eta(\bar e_\alpha,\bar e_\beta)=\delta_{\alpha,\beta}$.
	We also write $\Delta=\diag\{\Delta_0,\ldots,\Delta_{N-1}\}$.

	\subsection{$\Psi$-matrix and $R$-matrix}
	\label{sec:PF-R}
	By Givental~\cite{Giv01b}, at a generic point
	$t=\sum_i t^i\hh^i$, the modified $S$-matrix admits the asymptotic
	factorization
	\beq\label{eqn:S-R}
	{}^{\mathrm{new}}S^{t,*}(z)
	:=C_1^*(\lambda,z)C_2^*(\rho,z)S^{t,*}(z)
	\asymp e^{U(t)/z}R^{t,*}(z)\Psi^t.
	\eeq
	Here $\asymp$ denotes the asymptotic expansion as $z\to0$, and
	$U(t)=\diag\{u^0,\ldots,u^{N-1}\}$ is the diagonal matrix of
	canonical coordinates. The matrix $\Psi^t$ is the transformation
	matrix from a flat basis to the normalized canonical basis, while
	$R^{t,*}$ is the adjoint of the $R$-matrix $R^t$ with respect to
	$\eta$ and has the expansion
	$R^{t,*}(z)=\sum_{k\geq0}R_k^{t,*}z^k$.
	The correction factors in~\eqref{eqn:S-R} are
	\begin{align*}
		C_1^*(\lambda,z)=&\,
		\diag\Big(\Big\{e^{\sum_{k\geq1}\sum_{j\ne i}
			\frac{B_{2k}}{2k(2k-1)}
			\frac{z^{2k-1}}{(\lambda_j-\lambda_i)^{2k-1}}}\Big\}_{i=0,\cdots,N-1}\Big),\\
		C_2^*(\rho,z)=&\,
		e^{\sum_{k\geq1}\sum_{a=1}^r
			\frac{B_{2k}}{2k(2k-1)}
			\frac{z^{2k-1}}{(m_a\hh-\rho)^{2k-1}}},
	\end{align*}
	where $B_{2k}$ are the Bernoulli numbers, and
	$C_2^*(\rho,z)$ is understood as the operator of multiplication by
	the displayed class.
	Moreover, the $R$- and $\Psi$-matrices satisfy the string equations
	$\pd_{t^0}R^{t,*}(z)=0$ and $\pd_{t^0}\Psi^t=0$, as well as the
	divisor equations
	$\pd_{t^1}R^{t,*}(z)=q\pd_qR^{t,*}(z)$ and
	$\pd_{t^1}\Psi^t=q\pd_q\Psi^t$.
	The $R$-matrix also satisfies the symplectic condition
	$R^{t,*}(-z)R^t(z)=\id$.

	At $t=0$, we denote the corresponding $\Psi$- and $R$-matrices by $\Psi$ and $R^{*}(z)$, respectively.
	The first column of $R^{*}(z)\Psi$ is uniquely determined by the Picard--Fuchs equation and
	the classical limit condition: 
	\beq\label{eqn:initial-R}
	R^{*}(z)|_{q=0}=e^{\diag(b_0(z),\cdots,b_{N-1}(z))},
	\eeq
	where
	$$
	b_i(z)=\sum_{k\geq1}\frac{B_{2k}}{2k(2k-1)}
	\left(\sum_{j\ne i}\frac{z^{2k-1}}{(\lambda_j-\lambda_i)^{2k-1}}
	+\sum_{a=1}^r\frac{z^{2k-1}}{(m_a\lambda_i-\rho)^{2k-1}}\right).
	$$
	The remaining columns of $R^*(z)\Psi$ can be computed from the first
	column by evaluating the quantum differential equation
	\beq\label{eqn:QDE-R}
	z\pd_{t^i}\left(e^{U(t)/z}R^{t,*}(z)\Psi^t\right)
	=
	e^{U(t)/z}R^{t,*}(z)\Psi^t(\phi_i*_t)
	\eeq
	at $t=0$.
	In this subsection, we explicitly compute the matrices $\Psi$ and $R_1^*$.
	\subsubsection{Picard--Fuchs equation for $R^*\Psi {\bf 1}$}
	Since $\tau$ lies in the unit direction, the string equation gives
	$R^{\tau,*}(z)=R^*(z)$ and $\Psi^{\tau}=\Psi$.
	Moreover, the correction factors $C_1^*(\lambda,z)$ and
	$C_2^*(\rho,z)$ are independent of $q$.
	Therefore, applying the asymptotic relation~\eqref{eqn:S-R} at
	$t=\tau$ and using
	$I^{\lambda,\rho}(q,z)=zS^{\tau,*}(z){\bf1}$, we see that
	\[
	\widetilde I_\alpha^{\lambda,\rho}(q,z)
	:=
	e^{u^\alpha(\tau)/z}
	R^*(z)_{\bar\beta}^{\bar\alpha}\Psi^{\bar\beta}_0
	\]
	satisfies the Picard--Fuchs equation
	\beq\label{eqn:PF-I-cano-u}
	\Big((zD+\lambda_\alpha)^N-\lambda^N
	-q\prod_{j=1}^r\prod_{k=1}^{m_j}
	(m_j(zD+\lambda_\alpha)+kz-\rho)\Big)
	\widetilde I_{\alpha}^{\lambda,\rho}(q,z)=0.
	\eeq
	To compute the $R$-matrix and $\Psi$-matrix, we rewrite the above equation as follows:
	\beq\label{eqn:PF-I-cano}
	\Big((zD+x_\alpha)^N-\lambda^N
	-q\prod_{j=1}^r\prod_{k=1}^{m_j}
	(m_j(zD+x_\alpha)+kz -\rho)\Big)R^{*}(z)^{\bar\alpha}_{0}=0,
	\eeq
	where
	$x_\alpha=\lambda_\alpha+D(u^\alpha(\tau))$,
	and $R^{*}(z)^{\bar\alpha}_{0}$ is the simplified notation of $R^{*}(z)^{\bar\alpha}_{\bar\beta}\Psi^{\bar\beta}_{0}$.
	
	Now we write $D_{\alpha}=zD+x_\alpha$ and define
	$$
	\PF(D_\alpha):=D_\alpha^N-\lambda^N
	-q\prod_{i=1}^r\prod_{k=1}^{m_i}(m_iD_\alpha+kz-\rho),
	$$
	then the Picard--Fuchs equation for $R^{*}$ is
	\beq\label{eqn:PF-R}
	\PF(D_\alpha)(R^{*}(z)^{\bar\alpha}_{0})=0.
	\eeq
	We denote
	$$
	\PF(D_\alpha)=\PF_{\alpha,0}+z\PF_{\alpha,1}+z^2\PF_{\alpha,2}+O(z^3),
	$$
	where $\PF_{\alpha,k}$ is the coefficient of $z^k$.
	It is easy to see $\PF_{\alpha,0}=P(x_\alpha,q)$.
	\begin{lemma}
		With respect to the canonical basis, the vector field $\widetilde\hh$ is given by
		$$
		\widetilde\hh=\sum_\alpha x_\alpha e_\alpha,
		$$
		where $x_\alpha$ is
		uniquely determined by
		\beq\label{eqn:PF-0}
		P(x_\alpha,q)=x_\alpha^N-\lambda^N-q\prod_{i=1}^r(m_ix_\alpha-\rho)^{m_i}=0,
		\eeq
		and the initial condition $x_\alpha|_{q=0}=\lambda_\alpha$.
		Consequently,
		\beq\label{eqn:qp-relation}
		\widetilde\hh^{*N}-\lambda^N-q(m_1\widetilde\hh-\rho)^{*m_1}*
		\cdots*(m_r\widetilde\hh-\rho)^{*m_r}=0.
		\eeq
	\end{lemma}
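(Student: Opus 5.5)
We extract the lowest-order part in $z$ of the Picard--Fuchs equation \eqref{eqn:PF-R} and combine it with the fact that quantum multiplication by $\widetilde\hh$ is diagonal in the canonical basis.

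\textbf{Step 1: eigenvalues of $\widetilde\hh*$.} Since $e_\alpha$ are the idempotents of $*$ and the canonical coordinates satisfy $du^\alpha = $ the $e_\alpha$-coefficient of the flat vector, the canonical expansion of the vector field $\widetilde\hh=\hh+\tau$ is
\[
\widetilde\hh=\sum_\alpha \partial_{t^1}u^\alpha|_{t=\tau}\,e_\alpha .
\]
Here we use that $\tau$ lies in the unit direction and the string equation, so $\partial_{t^0}u^\alpha=1$. The divisor equation gives $\partial_{t^1}u^\alpha=\lambda_\alpha$-shifted $D u^\alpha$. More precisely, writing $u^\alpha(t)$ near $t=\tau$, the divisor equation gives $\partial_{t^1}u^\alpha = \lambda_\alpha+D(u^\alpha)$ (the classical part contributes $t^1\lambda_\alpha$). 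Hence the coefficient is $x_\alpha=\lambda_\alpha+D(u^\alpha(\tau))$, matching the definition in \eqref{eqn:PF-I-cano}.

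\textbf{Step 2: $x_\alpha$ solves $P(x_\alpha,q)=0$.} Take \eqref{eqn:PF-R} and expand in $z$. The function $R^*(z)^{\bar\alpha}_0=\Psi^{\bar\alpha}_0+O(z)$ is a power series in $z$. Conjugating by $e^{u^\alpha/z}$ turns $zD+\lambda_\alpha$ into $zD+x_\alpha$, and $zD$ applied to a $z$-power series is $O(z)$. Therefore the $z^0$ coefficient of \eqref{eqn:PF-R} is $P(x_\alpha,q)\Psi^{\bar\alpha}_0=0$. Since $\Psi$ is invertible, at least one component $\Psi^{\bar\alpha}_0$ is nonzero, and $\Psi^{\bar\alpha}_0=\eta({\bf 1},\bar e_\alpha)=\Delta_\alpha^{-1/2}\neq0$. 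Thus $P(x_\alpha,q)=0$.

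\textbf{Step 3: initial condition and uniqueness.} At $q=0$ the quantum product is the cup product, and $\hh\cup e_\alpha|_{q=0}=\lambda_\alpha e_\alpha|_{q=0}$, while $\tau|_{q=0}=0$. This gives $x_\alpha|_{q=0}=\lambda_\alpha$. Uniqueness follows from the formal implicit function theorem: $\partial_xP(\lambda_\alpha,0)=N\lambda_\alpha^{N-1}\neq0$ because the $\lambda_\alpha$ are distinct and nonzero after localization. Hence the root is unique in $\mathbb F[[q]]$, or over the appropriate extension with $\rho$.

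\textbf{Step 4: the relation \eqref{eqn:qp-relation}.} Since $*$ is diagonal in $\{e_\alpha\}$, any polynomial $f(\widetilde\hh)$ in quantum powers equals $\sum_\alpha f(x_\alpha)e_\alpha$. Applying this to $f(x)=P(x,q)$ gives zero by Step 2.

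The main obstacle is Step 1: one has to justify that the canonical coordinates are normalized as in \eqref{eqn:S-R}, so that $\partial_{t^1}u^\alpha$ really equals $\lambda_\alpha+Du^\alpha$ rather than $Du^\alpha$ alone. I would verify this by comparing the $q=0$ limit, where $u^\alpha=\sum_i t^i\lambda_\alpha^i$ classically, with the divisor equation for the quantum corrections.
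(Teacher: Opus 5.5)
Your proposal is correct and follows the paper's proof essentially step by step. You identify $x_\alpha$ as the eigenvalue of $\widetilde\hh*$ via the divisor equation and canonical coordinates, read off $P(x_\alpha,q)\Psi^{\bar\alpha}_0=0$ from the $z^0$ part of \eqref{eqn:PF-R}, fix the root by the implicit function theorem at $q=0$, and deduce \eqref{eqn:qp-relation} from the diagonality of $*$ in $\{e_\alpha\}$.

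There is one bookkeeping slip in Step 1, which matters only when $\fanoind=1$. The numbers $\partial_{t^1}u^\alpha|_{t=\tau}$ are the canonical coefficients of $\hh$, not of $\widetilde\hh=\hh+\tau{\bf 1}$. The correct expansion is therefore $\widetilde\hh=\sum_\alpha(\partial_{t^1}u^\alpha|_{t=\tau}+\tau)e_\alpha$. This still agrees with $x_\alpha=\lambda_\alpha+D(u^\alpha(\tau))$, because the $D$ there is the total $q$-derivative along $t=\tau(q)$, and $u^\alpha(\tau)=u^\alpha(0)+\tau$ with $D\tau=\tau$. So your two omissions of $\tau$ cancel, and the conclusion stands.
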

	\begin{proof}
		By the divisor equation and the definition of the canonical
		coordinates, $x_\alpha$ is the eigenvalue of quantum multiplication
		by $\widetilde\hh$ corresponding to $e_\alpha$. Hence
		\(
		\widetilde\hh=\sum_\alpha x_\alpha e_\alpha.
		\)
		The coefficient of $z^0$ in~\eqref{eqn:PF-R} is
		\(
		P(x_\alpha,q)\Psi^{\bar\alpha}_0=0.
		\)
		Since $\Psi^{\bar\alpha}_0$ is invertible after localization, this
		gives $P(x_\alpha,q)=0$. Moreover,
		$P'(\lambda_\alpha,0)=N\lambda_\alpha^{N-1}\ne0$, so the initial
		condition $x_\alpha|_{q=0}=\lambda_\alpha$ determines $x_\alpha$
		uniquely. Finally,
		\(
		P(\widetilde\hh)
		=\sum_\alpha P(x_\alpha,q)e_\alpha=0,
		\)
		which gives~\eqref{eqn:qp-relation}.
	\end{proof}
	
	\subsubsection{Computation of $\Psi{\bf 1}$}
	Taking the coefficient of $z^1$ in~\eqref{eqn:PF-R} gives
	$\PF_{\alpha,1}(\Psi^{\bar\alpha}_0)=0$.
	To compute $\PF_{\alpha,1}$, we introduce
	\beq\label{eqn:K1}
	K_1(x)=\sum_{i=1}^r\frac{m_i(m_i+1)}{2(m_ix-\rho)}.
	\eeq
	A direct expansion gives
	\begin{align*}
		\PF_{\alpha,1}
		=&\, P'(x_\alpha,q)D
		+\Big(\frac{1}{2}P''(x_\alpha,q)D(x_\alpha)
		-qW(x_\alpha)K_1(x_\alpha)\Big).
	\end{align*}
	Furthermore, since $D(P(x_\alpha,q))=0$, we have
	\beq\label{eqn:Dx-alpha}
	P'(x_\alpha,q)D(x_\alpha)=qW(x_\alpha)=x_\alpha^N-\lambda^N.
	\eeq
	Therefore
	$$
	\PF_{\alpha,1}
	=P'(x_\alpha,q)D+
	\Big(\frac{1}{2}P''(x_\alpha,q)D(x_\alpha)
	-P'(x_\alpha,q)D(x_\alpha)K_1(x_\alpha)\Big).
	$$
	\begin{lemma}
		We have the following formula for $\Psi^{\bar\alpha}_0$:
		\beq\label{eqn:Psi0}
		(\Psi_{0}^{\bar\alpha})^2
		=\frac{\prod_{i=1}^r(m_ix_\alpha-\rho)}{P'(x_\alpha,q)}.
		\eeq
	\end{lemma}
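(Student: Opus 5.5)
The plan is to read $\Psi^{\bar\alpha}_0$ as the solution of the first-order ODE $\PF_{\alpha,1}(\Psi^{\bar\alpha}_0)=0$ in $q$ derived just above, and to show that the proposed right-hand side of \eqref{eqn:Psi0} satisfies the same ODE and the same initial condition at $q=0$. Since $P'(x_\alpha,q)$ is invertible near $q=0$ (indeed $P'(\lambda_\alpha,0)=N\lambda_\alpha^{N-1}\neq0$), the equation
\[
P'(x_\alpha,q)\,D\Psi^{\bar\alpha}_0=\Big(P'(x_\alpha,q)D(x_\alpha)K_1(x_\alpha)-\tfrac12P''(x_\alpha,q)D(x_\alpha)\Big)\Psi^{\bar\alpha}_0
\]
determines $\Psi^{\bar\alpha}_0$ as a formal power series in $q$ from its value at $q=0$. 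The same then holds for $(\Psi^{\bar\alpha}_0)^2$, whose logarithmic derivative is twice the one above.

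\emph{Initial condition.} Since ${\bf 1}=\sum_\alpha e_\alpha=\sum_\alpha\Delta_\alpha^{-1/2}\bar e_\alpha$, we get $(\Psi^{\bar\alpha}_0)^2=\Delta_\alpha^{-1}=\eta(e_\alpha,e_\alpha)$. At $q=0$, \eqref{eqn:eta-at-q=0} gives
\[
\frac{\prod_i(m_i\lambda_\alpha-\rho)}{\prod_{\gamma\neq\alpha}(\lambda_\alpha-\lambda_\gamma)}.
\]
Because $P(x,0)=x^N-\lambda^N=\prod_\gamma(x-\lambda_\gamma)$, we have $\prod_{\gamma\ne\alpha}(\lambda_\alpha-\lambda_\gamma)=P'(\lambda_\alpha,0)$. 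Together with $x_\alpha|_{q=0}=\lambda_\alpha$, this matches the right-hand side of \eqref{eqn:Psi0} at $q=0$. (This already fixes the sign ambiguity in the square.)

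\emph{ODE check.} Let $G=\prod_i(m_ix_\alpha-\rho)/P'(x_\alpha,q)$; we compute $D\log G$. The numerator contributes $D(x_\alpha)\sum_i m_i/(m_ix_\alpha-\rho)$. For the denominator, note that $P'$ depends on $q$ both through $x_\alpha$ and explicitly, so
\[
D\big(P'(x_\alpha,q)\big)=P''(x_\alpha,q)D(x_\alpha)-qW'(x_\alpha).
\]
Using $qW'(x)=qW(x)\sum_i m_i^2/(m_ix-\rho)$ and \eqref{eqn:Dx-alpha}, namely $qW(x_\alpha)=P'(x_\alpha,q)D(x_\alpha)$, we obtain
\[
\frac{qW'(x_\alpha)}{P'(x_\alpha,q)}=D(x_\alpha)\sum_i\frac{m_i^2}{m_ix_\alpha-\rho}.
\]
Hence
\[
\tfrac12 D\log G=D(x_\alpha)\sum_i\frac{m_i(m_i+1)}{2(m_ix_\alpha-\rho)}-\frac{P''(x_\alpha,q)D(x_\alpha)}{2P'(x_\alpha,q)}=D(x_\alpha)K_1(x_\alpha)-\frac{P''(x_\alpha,q)D(x_\alpha)}{2P'(x_\alpha,q)},
\]
by \eqref{eqn:K1}. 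This is exactly the logarithmic derivative of $\Psi^{\bar\alpha}_0$ dictated by $\PF_{\alpha,1}(\Psi^{\bar\alpha}_0)=0$.

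By uniqueness of solutions to this first-order ODE with the given initial value, $(\Psi^{\bar\alpha}_0)^2=G$. The only delicate point is the derivative of $P'$: one must not forget the explicit $q$-dependence, since it is precisely the term $-qW'$ that turns $m_i$ into $m_i(m_i+1)/2$, matching $K_1$.
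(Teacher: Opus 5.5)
Your proposal is correct and follows the paper's proof. The paper rewrites $\PF_{\alpha,1}(\Psi^{\bar\alpha}_0)=0$ as the logarithmic ODE \eqref{eqn:Dlog_Psi}, integrates it to $(\Psi^{\bar\alpha}_0)^2=c_\alpha\prod_i(m_ix_\alpha-\rho)/P'(x_\alpha,q)$, and fixes $c_\alpha=1$ via \eqref{eqn:eta-at-q=0}; you carry out the same steps, writing out the verification the paper leaves implicit, notably the explicit $-qW'(x_\alpha)$ term in $D\bigl(P'(x_\alpha,q)\bigr)$ that turns $m_i$ into $m_i(m_i+1)/2$.
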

	\begin{proof}
		The equation $\PF_{\alpha,1}(\Psi^{\bar\alpha}_0)=0$ is equivalent to
		\beq\label{eqn:Dlog_Psi}
		\frac{D\Psi_0^{\bar\alpha}}{\Psi_0^{\bar\alpha}}
		=D(x_\alpha)\left(K_1(x_\alpha)-\frac{P''(x_\alpha,q)}{2P'(x_\alpha,q)}\right).
		\eeq
		Solving this equation gives
		$$
		(\Psi_{0}^{\bar\alpha})^2
		=c_\alpha\cdot\frac{\prod_{i=1}^{r}(m_ix_\alpha-\rho)}{P'(x_\alpha,q)}.
		$$
		The constant $c_\alpha=1$ is determined by the initial condition~\eqref{eqn:eta-at-q=0}.
	\end{proof}
	We note here that $(\Psi^{\bar\alpha}_0)^2=\eta(e_\alpha,e_\alpha)=\Delta_{\alpha}^{-1}$.
	
	\subsubsection{Computation of $R^{*}_1{\bf 1}$}
	To compute the $R$-matrix, we introduce the notation $R^*(z)^{\alpha}_k$ which should be distinguished from $R^*(z)^{\bar\alpha}_k$:
	$$
	R^*(z)^{\alpha}_{k}:=\eta(e^\alpha,R^*(z){\hh}^k),\qquad
	R^*(z)^{\bar\alpha}_{k}:=\eta(\bar e_\alpha,R^*(z){\hh}^k),
	$$
	where $\{e^\alpha\}_{\alpha=0}^{N-1}$ denotes the basis dual to
	$\{e_\alpha\}_{\alpha=0}^{N-1}$ with respect to $\eta$.
	We have $R^*(z)^{\bar\alpha}_k=\Psi^{\bar\alpha}_0R^*(z)^\alpha_k$.
	By considering the commutation relation between $\PF(D_\alpha)$ and $\Psi^{\bar\alpha}_0$, and by using the equations for $x_\alpha$ and $\Psi^{\bar\alpha}_0$, equation~\eqref{eqn:PF-R} gives
	\beq\label{eqn:PF-R-c}
	P'(x_\alpha,q)D((R_1^*)^\alpha_0)
	+\PF_{\alpha,2}(\Psi_0^{\bar\alpha})/\Psi_0^{\bar\alpha}=0.
	\eeq
	For further use, we introduce the following functions (recall $K_1(x)$ is defined by \eqref{eqn:K1}):
	\begin{align*}
		K_0(x):=&\,\sum_{i=1}^r\frac{1}{m_ix-\rho},\qquad 
		K_2(x):=\frac{1}{2}K_1(x)^2
		-\frac{1}{12}\sum_{i=1}^r\frac{m_i(m_i+1)(2m_i+1)}{(m_ix-\rho)^2}.
	\end{align*}
	A direct expansion gives
	\begin{align*}
		\PF_{\alpha,2}=&\, \frac{1}{2}P''(x_\alpha,q)D^2
		+\bigg(\frac{1}{2}P^{(3)}(x_\alpha,q)D(x_\alpha)-q(WK_1)'(x_\alpha)\bigg)D
		+\frac{1}{6}P^{(3)}(x_\alpha,q)D^2(x_\alpha) \\
		&\, +\frac{1}{8}P^{(4)}(x_\alpha,q)D(x_\alpha)^2
		-q\bigg(\frac{1}{2}(WK_1)''(x_\alpha)D(x_\alpha)+W(x_\alpha)K_2(x_\alpha)\bigg).
	\end{align*}
	
	\begin{lemma}\label{lem:formula-R1}
		We have the following formula:
		\beq\label{eqn:formula-R1}
		\begin{split}
			(R_1^*)^\alpha_0
			={}&\frac{K_0(x_\alpha)}{12}
			-\frac{P''(x_\alpha,q)}{24P'(x_\alpha,q)}
			+D(x_\alpha)\bigg[
			K_2(x_\alpha)
			-\frac{K_1(x_\alpha)P''(x_\alpha,q)}{2P'(x_\alpha,q)}
			-\frac{K'_0(x_\alpha)}{12}\\
			&\hspace{1.5cm}
			+\frac{5P''(x_\alpha,q)^2}{24P'(x_\alpha,q)^2}
			-\frac{P^{(3)}(x_\alpha,q)}{8P'(x_\alpha,q)}
			+\frac{W'(x_\alpha)P''(x_\alpha,q)}{24W(x_\alpha)P'(x_\alpha,q)}
			-\frac{W''(x_\alpha)}{24W(x_\alpha)}
			\bigg].
		\end{split}
		\eeq
	\end{lemma}
	\begin{proof}
		The equation for $(R_1^*)^\alpha_0$ can be rewritten as follows:
		\begin{align*}
			D\big((R_1^*)^\alpha_0\big)
			={}&
			-\frac12\frac{P''}{P'}
			\left(D\left(\frac{D\Psi_0^{\bar\alpha}}{\Psi_0^{\bar\alpha}}\right)
			+\left(\frac{D\Psi_0^{\bar\alpha}}{\Psi_0^{\bar\alpha}}\right)^2\right)
			-\left(\frac12\frac{P^{(3)}}{P'}D(x_\alpha)
			-q\frac{(WK_1)'}{P'}\right)
			\frac{D\Psi_0^{\bar\alpha}}{\Psi_0^{\bar\alpha}}\\
			&-\frac16\frac{P^{(3)}}{P'}D^2(x_\alpha)
			-\frac18\frac{P^{(4)}}{P'}D(x_\alpha)^2
			+\frac{q}{P'}
			\left(\frac12(WK_1)''D(x_\alpha)+WK_2\right),
		\end{align*}
		where all functions in the last display are evaluated at $(x,q)=(x_\alpha,q)$.
		Using~\eqref{eqn:Dx-alpha},~\eqref{eqn:Dlog_Psi}, and
		$$
		-K'_0(x)=6K_1(x)^2-12K_2(x)+6K'_1(x)-\frac{d}{dx}\Big(\frac{W'(x)}{W(x)}\Big),
		$$
		one checks by direct simplification that the right-hand side is the $D$-derivative of the right-hand side of equation~\eqref{eqn:formula-R1}.
		Moreover, at $q=0$ one has $x_\alpha=\lambda_\alpha$ and $D(x_\alpha)=0$, so
		$$
		(R_1^*)^\alpha_0\big|_{q=0}
		=\frac{K_0(\lambda_\alpha)}{12}
		-\frac{P''(\lambda_\alpha,0)}{24P'(\lambda_\alpha,0)}.
		$$
		Since $P(x,0)=x^N-\lambda^N=\prod_{\beta=0}^{N-1}(x-\lambda_\beta)$, we have
		$\frac{P''(\lambda_\alpha,0)}{2P'(\lambda_\alpha,0)}
		=\sum_{\beta\ne\alpha}\frac{1}{\lambda_\alpha-\lambda_\beta}$.
		Therefore
		$$
		(R_1^*)^\alpha_0\big|_{q=0}
		=\frac{1}{12}\left(
		\sum_{i=1}^r\frac{1}{m_i\lambda_\alpha-\rho}
		+\sum_{\beta\ne\alpha}\frac{1}{\lambda_\beta-\lambda_\alpha}\right),
		$$
		which agrees with the initial condition~\eqref{eqn:initial-R}. This fixes the integration constant.
	\end{proof}
	
	\subsubsection{Computation of the remaining entries}
	Now we compute the remaining columns of $\Psi$ and $R_1^*$.
	For the matrix $\Psi$, recall that $\{1,\hh,\cdots,\hh^{N-1}\}=\{e_0,\cdots,e_{N-1}\}\Delta^{1/2}\Psi$,
	and 
	$$
	\{1,\widetilde\hh,\cdots,\widetilde\hh^{*(N-1)}\}=\{e_0,\cdots,e_{N-1}\}\check\Psi,
	$$
	where $\check\Psi^\alpha_j=x_\alpha^j$, $\alpha,j=0,\cdots,N-1$.
	Combining these two changes of basis gives
	\beq\label{eqn:Psi-check-Psi}
	\Psi=\Delta^{-1/2}\check\Psi\mathcal B^{-1},
	\eeq
	where $\mathcal B$ is defined by~\eqref{eqn:Trans-QH-H}.
	Thus, $\Psi$ is explicitly determined.
	
	We next compute the remaining entries of $R_1^*$.
	Expanding the QDE~\eqref{eqn:QDE-R} gives
	\begin{align*}
		&\, e^{U(t)/z}\left(\pd_{t^i}(U(t))R^{t,*}(z)\Psi^{t}
		+z\pd_{t^i}(R^{t,*}(z))\Psi^{t}
		+R^{t,*}(z)z\pd_{t^i}(\Psi^{t})\right)\\
		=&\, e^{U(t)/z}R^{t,*}(z)\Psi^{t}(\phi_i*_{t}).
	\end{align*}
	Canceling $e^{U(t)/z}$ and comparing the coefficients of $z^0$ and
	$z^1$, we obtain 
	$$
	\pd_{t^i}U(t)=\Psi^{t} (\phi_i*_{t}) (\Psi^{t})^{-1}
	$$ 
	and
	$$
	[R^{t,*}_1,\Psi^{t} (\phi_i*_{t}) (\Psi^{t})^{-1}]=(\pd_{t^i}\Psi^{t})(\Psi^{t})^{-1},
	$$
	respectively.
	Taking $i=1$, setting $t=0$, and using the divisor equation, we obtain
	\beq\label{eqn:R1}
	[R^{*}_1,\hh*]=D(\Psi)\Psi^{-1}.
	\eeq
	Under the same specialization, the coefficient of $z^2$ gives
	\beq\label{eqn:R2}
	[R^{*}_2,\hh*]
	=D(R_1^{*})+R^{*}_1 \cdot [R^{*}_1,\hh*].
	\eeq
	
	We regard each $R_k^*$ as an endomorphism of the state space and use
	the same notation for its matrix representation when the basis is
	clear. Equations~\eqref{eqn:R1} and~\eqref{eqn:R2} are written with
	respect to the normalized canonical basis. Passing to the flat basis
	$\{\hh^k\}_{k=0}^{N-1}$, we obtain
	\beq\label{eqn:R1-2}
	[R_1^*,\hh*]=\Psi^{-1}D(\Psi)
	\eeq
	and
	\beq\label{eqn:R2-2}
	[R_2^*,\hh*]
	=
	D(R_1^*)+[R_1^*,\hh*]R_1^*,
	\eeq
	respectively. Here $D(R_1^*)$ denotes the derivative of the matrix of
	$R_1^*$ with respect to the flat basis. In deriving
	\eqref{eqn:R2-2}, we have used
	$\Psi^{-1}D(\Psi)=[R_1^*,\hh*]$.
	
	We introduce the endomorphism
	\beq\label{eqn:operator-V}
	\mathcal V:=\fanoind[R_1^*,\hh*].
	\eeq
	Thus, its matrices with respect to the normalized canonical basis and
	the flat basis are given by
	$\mathcal V=\fanoind D(\Psi)\Psi^{-1}$
	and $\mathcal V=\fanoind\Psi^{-1}D(\Psi)$,
	respectively. Since $\widetilde\hh-\hh=\tau$ is a scalar multiple of
	the unit, we also have
	$[R_1^*,\widetilde\hh*]=[R_1^*,\hh*]
	=\frac{1}{\fanoind}\mathcal V$.
	Iterating this identity, for $k\geq1$ we obtain
	\beq\label{eqn:R1k}
	R_1^*\widetilde\hh^{*k}
	=
	\widetilde\hh^{*k}*R_1^*{\bf1}
	+\frac{1}{\fanoind}
	\sum_{a+b=k-1}
	\widetilde\hh^{*a}*
	\mathcal V(\widetilde\hh^{*b}).
	\eeq
	Since
	$\{{\bf1},\widetilde\hh,\ldots,\widetilde\hh^{*(N-1)}\}$
	is a basis by~\eqref{eqn:Trans-QH-H},
	this determines all the remaining entries of $R_1^*$ from $R_1^*{\bf1}$.

	\subsection{Quasi-homogeneity conditions}
	In the Gromov--Witten theory of $X$, there is an Euler vector field $E$ such that $*$, ${\bf 1}$ and $\eta$ are eigenvectors of the Lie derivative along $E$ with eigenvalues $0$, $-1$, and $2-\dim X$, respectively.
	For the equivariant twisted theory, the equivariant parameters
	$\lambda_i$ and $\rho$ carry nontrivial degrees, and the corresponding
	grading is expressed by quasi-homogeneity conditions.
	We summarize the relations needed below and refer the reader
	to~\cite{Giv96,Giv98,Giv01a,Giv01b} for details.
	
	The state space of the equivariant twisted theory is
	$\Frob=H_{T\times\mathbb C^{*}_{\rho}}^*(\mathbb P^{N-1})$.
	We fix a flat basis $\{\phi_a=\hh^a\}_{a=0}^{N-1}$.
	Set $\delta:=\dim\mathbb P^{N-1}-\operatorname{rank}\mathbb E=N-1-r=\dim X$.
	The Euler vector field is
	$$
	E=\sum_a(1-a)t^a\phi_a+
	c_1(\mathbb P^{N-1})-c_1(\mathbb E)
	=\sum_a(1-a)t^a\phi_a+\fanoind\,\hh.
	$$
	We define the degree operator
	$$
	\mu:=1-\tfrac{\delta}{2}-\nabla E,
	$$
	where $\nabla$ is the Levi--Civita connection of $\eta$.
	Equivalently, $\mu(\phi_a)=(a-\frac{\delta}{2})\phi_a$.
	Let
	$$
	\Lambda=\rho\pd_{\rho}+\lambda\pd_{\lambda},\qquad
	E_{\Lambda}:=E+\Lambda.
	$$ 
	Here $E_\Lambda$ acts on the coefficient functions of vector fields.
	Then, for any vector fields $v,w$, the Frobenius structure satisfies
	\begin{align}
		E_{\Lambda}(\eta(v,w))-\eta((E_{\Lambda}+\mu)(v),w)-\eta(v,(E_{\Lambda}+\mu)(w))&=0,\\
		(E_{\Lambda}+\mu)(v*_{t}w)-(E_{\Lambda}+\mu)(v)*_{t}w-v*_{t}(E_{\Lambda}+\mu)(w) &=\tfrac{\delta}{2}(v*_{t}w).\label{eqn:hom-qp}
	\end{align}
	Moreover, 
	\beq\label{eqn:hom-u-Psi}
	E_{\Lambda}(u^\alpha)=u^\alpha,\qquad
	E_{\Lambda}(\Psi^{t})=\Psi^{t}\mu,
	\eeq
	and
	\beq\label{eqn:hom-R}
	(z\pd_z+E_{\Lambda})(R^{t,*}(z))=0.
	\eeq
	
	At $t=0$, we have $E|_{t=0}=\fanoind\hh$, and the divisor equation identifies its action on coefficient functions with $\fanoind D$. 
	We next record some consequences of the homogeneity conditions and the QDE.
	Combining~\eqref{eqn:R1} and~\eqref{eqn:hom-u-Psi}, we obtain,
	with respect to the normalized canonical basis,
	$\mathcal V=\Psi \mu\Psi^{-1}-\Lambda(\Psi)\Psi^{-1}$.
	Here $\mathcal V$ and $\mu$ are regarded as endomorphisms, so
	$\Psi\mu\Psi^{-1}$ is the matrix representation of $\mu$ with
	respect to the normalized canonical basis. Passing to the flat basis gives
	$\mathcal V=\mu-\Psi^{-1}\Lambda(\Psi)$.
	Since the flat basis vectors $\hh^k$ are independent of $q$, we have $D(\hh^k)=0$. 
	Differentiating $\hh^k=\sum_\alpha\bar e_\alpha
	\Psi^{\bar\alpha}_k$ gives
	$$
	0=\sum_\alpha D(\bar e_\alpha)\Psi^{\bar\alpha}_k
	+\sum_\alpha\bar e_\alpha D(\Psi^{\bar\alpha}_k).
	$$
	Together with $\fanoind D(\Psi)=\mathcal V\Psi$ and the invertibility of $\Psi$, this implies, for each $\alpha$,
	\beq\label{eqn:hom-bar-e}
	(\fanoind D+\mathcal V)(\bar e_\alpha)=0.
	\eeq
	Finally, taking the coefficient of $z^1$ in~\eqref{eqn:hom-R} gives
	\beq\label{eqn:hom-R1}
	\left(\fanoind D+\Lambda\right)(R_1^{*})=-R_1^{*}.
	\eeq
	Here $R_1^*$ is represented with respect to the normalized canonical basis.
	
	\subsection{Genus-one formula}
	We now derive the genus-one formula from Givental's reconstruction formula and the $R$-matrix.
	\begin{proposition}
		For $k\geq0$, we have
		\begin{align}
			\<\widetilde{\hh}^{*k}\>_{1,1}
			={}&-\frac{1}{24\fanoind}
			\sum_{a+b=k-1}
			\tr\bigl(
			\widetilde{\hh}^{*a}*
			\mathcal V(\widetilde{\hh}^{*b})*
			\bigr)
			-\frac{1}{4\fanoind}
			\sum_{a+b=k-1}
			\tr\bigl(
			\widetilde{\hh}^{*a}*
			\mu\widetilde{\hh}^{*b}*\mu
			\bigr)\nonumber\\
			&+\frac{1}{4\fanoind}
			\sum_{a+b=k-1}
			\tr\bigl(
			\widetilde{\hh}^{*a}*
			(\mathcal V-\mu)\widetilde{\hh}^{*b}*(\mathcal V-\mu)
			\bigr)
			-\frac{1}{2}
			\tr\bigl(
			\widetilde{\hh}^{*k}*
			\Lambda(R_1^*)
			\bigr),
			\label{eqn:equiv-vira-genus1}
		\end{align}
		where $ R_1^*$ is represented with respect to the flat basis $\{\hh^{k}\}_{k=0}^{N-1}$.
	\end{proposition}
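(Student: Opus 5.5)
The plan is to start from the Givental--Teleman formula for the genus-one potential of a semisimple CohFT. At a point $t$ of the small phase space it reads
\[
dF_1=\sum_\alpha \tfrac{1}{48}\, d\log\Delta_\alpha+\tfrac12\sum_\alpha (R_1^{t})^{\bar\alpha}_{\bar\alpha}\, du^\alpha .
\]
For a vector field $v$ this gives
\[
\<v\>_{1,1}=\tfrac{1}{48}\sum_\alpha v(\log\Delta_\alpha)+\tfrac12\sum_\alpha (R_1)^{\bar\alpha}_{\bar\alpha}\, v(u^\alpha).
\]
Here the sign convention for $R_1^*$ versus $R_1$ is $R^*_1=-R_1$ by the symplectic condition, which accounts for the $-\tfrac12$ in the statement. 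We take $v=\widetilde\hh^{*k}=\sum_\alpha x_\alpha^k e_\alpha$ at $t=\tau$. Since $du^\alpha(e_\beta)=\delta_{\alpha\beta}$, the second term becomes $-\tfrac12\sum_\alpha x_\alpha^k (R_1^*)^{\bar\alpha}_{\bar\alpha}=-\tfrac12\tr(\widetilde\hh^{*k}*R_1^*)$.

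Next we eliminate $R_1^*$ using homogeneity. By \eqref{eqn:hom-R1}, $R_1^*=-(\fanoind D+\Lambda)R_1^*$, so
\[
-\tfrac12\tr(\widetilde\hh^{*k}*R_1^*)=\tfrac12\tr(\widetilde\hh^{*k}*\fanoind D R_1^*)-\tfrac12\tr(\widetilde\hh^{*k}*\Lambda R_1^*)\cdot(\ldots).
\]
We keep the $\Lambda(R_1^*)$ part as the last term of \eqref{eqn:equiv-vira-genus1}, up to a factor to be checked; note that $-\tfrac12\tr(\cdot\,\Lambda R_1^*)$ appears exactly. For the $D$-part, the diagonal of $D(R_1^*)$ in the canonical frame is read off from \eqref{eqn:R2-2}. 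The diagonal of a commutator with the diagonal operator $\hh*$ vanishes. Hence the diagonal entries of $D(R_1^*)$ equal minus the diagonal entries of $[R_1^*,\hh*]R_1^*$, corrected by the frame-change terms coming from $D(\bar e_\alpha)=-\fanoind^{-1}\mathcal V\bar e_\alpha$ in \eqref{eqn:hom-bar-e}. Everything is then expressed through $\mathcal V=\fanoind[R_1^*,\hh*]$. In particular, $(R_1^*)^{\bar\alpha}_{\bar\beta}(x_\beta-x_\alpha)=\fanoind^{-1}\mathcal V^{\bar\alpha}_{\bar\beta}$ for $\alpha\neq\beta$, so the off-diagonal entries of $R_1^*$ are $\mathcal V/(\fanoind(x_\beta-x_\alpha))$.

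Products of $\mathcal V_{\alpha\beta}\mathcal V_{\beta\alpha}$ with coefficients $(x_\alpha^k-x_\beta^k)/(x_\alpha-x_\beta)=\sum_{a+b=k-1}x_\alpha^a x_\beta^b$ then produce $\sum_{a+b=k-1}\tr(\widetilde\hh^{*a}*\mathcal V\widetilde\hh^{*b}*\mathcal V)$. To write this in the displayed form, we expand $\mathcal V\,\cdot\,\mathcal V=(\mathcal V-\mu)\cdot(\mathcal V-\mu)+\mu\cdot\mathcal V+\mathcal V\cdot\mu-\mu\cdot\mu$. The cross terms $\tr(\widetilde\hh^{*a}*\mu\widetilde\hh^{*b}*\mathcal V)$ are converted using $\mathcal V(\widetilde\hh^{*b})$ together with $\fanoind D(\widetilde\hh^{*b})=(b-\tfrac{\delta}{2}-\mu)\widetilde\hh^{*b}$ and the twisted analogue $\mathcal V=\mu-\Psi^{-1}\Lambda\Psi$. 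This should produce the coefficient pattern $-\tfrac14,+\tfrac14$ in the statement.

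The $\tfrac1{48}$ term is handled similarly. Differentiating $\log\Delta_\alpha=-\log\eta(e_\alpha,e_\alpha)$ along $\widetilde\hh^{*k}$ gives, via \eqref{eqn:hom-bar-e} and the Leibniz rule for $*$, the expression $\fanoind^{-1}\sum_{a+b=k-1}\tr(\widetilde\hh^{*a}*\mathcal V(\widetilde\hh^{*b})*)$. Its coefficient is $-\tfrac{2}{48}=-\tfrac1{24}$, using the skewness $\mathcal V^{\bar\alpha}_{\bar\alpha}=0$ and the symmetric part.

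The main obstacle is the bookkeeping in the previous two steps: correctly commuting $D$ past the moving canonical frame, tracking the symmetric versus antisymmetric parts of $\mathcal V$ (which is $\eta$-skew up to $\Lambda\Psi$ terms), and matching signs so that all $R_1^*$-dependence except the $\Lambda(R_1^*)$ trace cancels. To control this, I would first verify the identity numerically for $k=0,1$. For $k=0$ both sides should vanish by the string equation. The case $k=1$ reduces to the $L_0$-type identity from quasi-homogeneity, $(E_\Lambda)F_1=-\tfrac1{24}\tr(\mu\text{-stuff})$, and fixes the constants. The general $k$ then follows by the same telescoping identity $x_\alpha^k-x_\beta^k=(x_\alpha-x_\beta)\sum x_\alpha^a x_\beta^b$.
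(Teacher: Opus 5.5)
Your outline has the same skeleton as the paper's proof: Givental's genus-one formula, then \eqref{eqn:hom-R1} to trade $R_1^*$ for $D(R_1^*)$ and $\Lambda(R_1^*)$, then \eqref{eqn:R2} with the telescoping $x_\alpha^k-x_\beta^k$. There is a genuine gap, however: two steps fail as written.

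\textbf{The sign.} The symplectic condition gives $R_1^*=R_1$, not $-R_1$, since the $z$-coefficient of $R^*(-z)R(z)$ is $R_1-R_1^*$. So the second term of Givental's formula is $+\frac12\tr(\widetilde\hh^{*k}*R_1^*)$. The $-\frac12$ in front of $\Lambda(R_1^*)$ comes from $R_1^*=-(\fanoind D+\Lambda)R_1^*$. The same relation produces $-\frac{\fanoind}{2}\tr(\widetilde\hh^{*k}*D(R_1^*))=-\frac{1}{4\fanoind}\sum_{a+b=k-1}\tr(\widetilde\hh^{*a}*\mathcal V\widetilde\hh^{*b}*\mathcal V)$. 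With your sign, every term except the one coming from the $\frac1{48}$-term ends up with the opposite sign. That inconsistency is what your placeholder ``$\cdot(\ldots)$'' covers. Your $k=0$ check cannot fix it, because in the twisted theory $\<{\bf 1}\>_{1,1}\neq 0$. The degree-zero contribution over $\overline{M}_{1,1}\times\mathbb P^{N-1}$ is $\frac1{24}\sum_\alpha\sum_i(m_i\lambda_\alpha-\rho)^{-1}$ (compare $\<\hh^{\twqp 0}\>^{\rm tw}_{1,1}$ in the cubic example). By \eqref{eqn:initial-R} this equals $+\frac12\tr R_1^*|_{q=0}$, which confirms the plus sign.

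\textbf{The missing frame change.} This is the more serious problem. Relation \eqref{eqn:hom-R1} holds for the matrix of $R_1^*$ in the normalized canonical basis, but the statement uses its flat-basis matrix. $\Lambda$ does not commute with this change of frame, because $\Psi$ depends on $\lambda$ and $\rho$. The missing step is the identity $\Psi^{-1}\Lambda(R_1^*)\Psi=\Lambda(R_1^{*,\mathrm{flat}})-[R_1^{*,\mathrm{flat}},\Psi^{-1}\Lambda(\Psi)]$, where $\Psi^{-1}\Lambda(\Psi)$ represents $\mu-\mathcal V$. Using $[R_1^*,\widetilde\hh^{*k}*]=\frac1{\fanoind}\sum_{a+b=k-1}\widetilde\hh^{*a}*\mathcal V\widetilde\hh^{*b}*$ and cyclicity of the trace, it contributes $+\frac{1}{2\fanoind}\sum_{a+b=k-1}\tr(\widetilde\hh^{*a}*(\mathcal V-\mu)\widetilde\hh^{*b}*\mathcal V)$.

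After writing $\mathcal V=\mu+(\mathcal V-\mu)$, this contribution does two things. It cancels the mixed $\mu\,(\mathcal V-\mu)$ terms of the $D$-part. It also changes the coefficient of the $(\mathcal V-\mu)\cdots(\mathcal V-\mu)$ trace from $-\frac1{4\fanoind}$ to $+\frac1{4\fanoind}$.

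Your substitute mechanism cannot do this, for three reasons:
\begin{itemize}
\item $\mathcal V(\widetilde\hh^{*b})$ and $\fanoind D(\widetilde\hh^{*b})=(b-\frac\delta2-\mu)\widetilde\hh^{*b}$ are identities between vectors, whereas the mixed term is the trace of an operator product.
\item That formula for $\fanoind D(\widetilde\hh^{*b})$ is the untwisted one. In the twisted theory an extra $-\Lambda(\widetilde\hh^{*b})$ appears, since $\mathcal B$ depends on $\rho$.
\item The operator identity $\mathcal V=\mu-\Psi^{-1}\Lambda(\Psi)$ is the right ingredient, but only through the frame change of $\Lambda(R_1^*)$. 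On its own it just renames $\mathcal V-\mu$.
\end{itemize}
Your splitting also keeps $(\mathcal V-\mu)\cdots(\mathcal V-\mu)$ with the same coefficient as $\mathcal V\cdots\mathcal V$. So the pattern $-\frac14,+\frac14$ you expect could only appear with the wrong initial sign: the two errors mask each other.

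\textbf{A smaller point.} \eqref{eqn:hom-bar-e} only controls the $D$-direction. To differentiate $\log\Delta_\alpha$ along $\widetilde\hh^{*k}$ you need $\partial_{t^i}\Psi\,\Psi^{-1}=[R_1^*,\phi_i*]$ in the canonical frame, obtained from \eqref{eqn:QDE-R}. That gives $-\frac12 v(\log\Delta_\alpha)=\eta(e^\alpha,[R_1^*,v*]{\bf 1})$, which is the form of the genus-one formula the paper starts from.
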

	
	\begin{proof}
		We first derive the formula with respect to the normalized canonical basis.
		Givental's reconstruction formula (see also~\cite{guo2016genus}) gives,
		for any $v$,
		\[
		\<v\>_{1,1}
		=-\frac{1}{24}\sum_\alpha
		\eta(e^\alpha,[R_1^*,v*]{\bf1})
		+\frac{1}{2}\tr(v*R_1^*).
		\]
		For any $v$, we have
		$\sum_\alpha\eta(e^\alpha,v)=\tr(v*)$.
		Applying this identity together with~\eqref{eqn:R1k} and
		\eqref{eqn:hom-R1}, we obtain
		\begin{align}
			\<\widetilde{\hh}^{*k}\>_{1,1}
			={}&-\frac{1}{24\fanoind}
			\sum_{a+b=k-1}
			\tr\bigl(
			\widetilde{\hh}^{*a}*
			\mathcal V(\widetilde{\hh}^{*b})*
			\bigr)\nonumber\\
			&-\frac{\fanoind}{2}
			\tr\bigl(
			\widetilde{\hh}^{*k}*D(R_1^*)
			\bigr)
			-\frac{1}{2}
			\tr\bigl(
			\widetilde{\hh}^{*k}*\Lambda(R_1^*)
			\bigr).
			\label{eqn:genus-one-canonical}
		\end{align}
		By~\eqref{eqn:R2},
		$D(R_1^*)=[R_2^*,\hh*]-R_1^*[R_1^*,\hh*]$.
		Since $\widetilde\hh-\hh$ is a scalar multiple of the unit,
		$[R_i^*,\hh*]=[R_i^*,\widetilde\hh*]$. The cyclicity of the trace
		therefore gives
		\beq\label{eqn:trace-DR1}
		\tr\bigl(
		\widetilde{\hh}^{*k}*D(R_1^*)
		\bigr)
		=
		\frac{1}{2\fanoind^2}
		\sum_{a+b=k-1}
		\tr\bigl(
		\widetilde{\hh}^{*a}*
		\mathcal V\widetilde{\hh}^{*b}*\mathcal V
		\bigr).
		\eeq
		
		We then pass from the normalized canonical basis to the flat basis. Let
		\[
		R_{1}^{*,\mathrm{flat}}:=\Psi^{-1}R_1^*\Psi,
		\]
		then we have
		\[
		\Psi^{-1}\Lambda(R_1^*)\Psi
		=
		\Lambda(R_{1}^{*,\mathrm{flat}})
		-[R_{1}^{*,\mathrm{flat}},\Psi^{-1}\Lambda(\Psi)].
		\]
		By the cyclicity of the trace,
		\[
		\tr\bigl(
		\widetilde\hh^{*k}*[R_{1}^{*,\mathrm{flat}},\Psi^{-1}\Lambda(\Psi)]
		\bigr)
		=
		-\frac{1}{\fanoind}
		\sum_{a+b=k-1}
		\tr\bigl(
		\widetilde\hh^{*a}*
		(\Psi^{-1}\Lambda(\Psi))\, 
		\widetilde\hh^{*b}*\mathcal V
		\bigr).
		\]
		Notice that $\Psi^{-1}\Lambda(\Psi)$ is the matrix representation of $\mu-\mathcal V$ under the flat basis. We obtain
		\beq\label{eqn:Lambda-R1-flat}
		\tr\bigl(\widetilde{\hh}^{*k}*\Lambda(R_1^*)
		\bigr)
		=\tr\bigl(\widetilde{\hh}^{*k}*\Lambda(R_{1}^{*,\mathrm{flat}})\bigr)
		-\frac{1}{\fanoind}\sum_{a+b=k-1}
		\tr\bigl(\widetilde\hh^{*a}*(\mathcal V-\mu)
		\widetilde\hh^{*b}*\mathcal V\bigr).
		\eeq
		Substituting~\eqref{eqn:trace-DR1} and~\eqref{eqn:Lambda-R1-flat} into
		\eqref{eqn:genus-one-canonical}, and using
		$\mathcal V=\mu+(\mathcal V-\mu)$ together with the cyclicity of the
		trace, we see that the mixed terms cancel. This proves the
		proposition.
	\end{proof}
	\begin{corollary}
		For $k\geq0$, we have
		\begin{align*}
			&\<\widetilde{\hh}^{*k}\>_{1,1}
			-\frac{1}{24}\sum_{a+b=k-1}
			\tr\bigl(
			\widetilde{\hh}^{*a}*
			q\tfrac{d}{dq}(\widetilde{\hh}^{*b})*
			\bigr)\\
			={}&-\frac{k(k-1-\delta)}{48\fanoind}
			\tr\bigl(\widetilde{\hh}^{*(k-1)}*\bigr)
			-\frac{1}{4\fanoind}\sum_{a+b=k-1}
			\tr\bigl(
			\widetilde{\hh}^{*a}*
			\mu\widetilde{\hh}^{*b}*\mu
			\bigr)\\
			&+\frac{1}{4\fanoind}\sum_{a+b=k-1}
			\tr\bigl(
			\widetilde{\hh}^{*a}*
			(\mathcal V-\mu)\widetilde{\hh}^{*b}*
			(\mathcal V-\mu)
			\bigr)
			-\frac{1}{2}
			\tr\bigl(
			\widetilde{\hh}^{*k}*\Lambda(R_1^*)
			\bigr)\\
			&+\frac{1}{24\fanoind}
			\sum_{a+b=k-1}\sum_\alpha
			\tr\bigl(
			\widetilde{\hh}^{*a}*
			\Lambda(x_\alpha^b\Psi^{\bar\alpha}_0)
			\bar e_\alpha*
			\bigr).
		\end{align*}
		Here $R_1^*$ is represented with respect to the flat basis, as in
		the proposition above.
	\end{corollary}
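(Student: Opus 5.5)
The plan is to start from the proposition \eqref{eqn:equiv-vira-genus1} and rewrite only its first term, since the other three terms already appear verbatim in the corollary. The first term involves $\mathcal V(\widetilde\hh^{*b})$. I will subtract from it the trace term $\frac{1}{24}\sum\tr(\widetilde\hh^{*a}*D(\widetilde\hh^{*b})*)$. So the task is to express $\fanoind D(\widetilde\hh^{*b})+\mathcal V(\widetilde\hh^{*b})$ through $\mu$, the degree, and a $\Lambda$-derivative term.

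The key tool is the twisted analogue of \eqref{eqn:hom-small-quantum-powers}. At $t=0$, $E$ acts as $\fanoind D$ on coefficient functions, so \eqref{eqn:hom-qp} reads
\[
(\fanoind D+\Lambda+\mu+\tfrac{\delta}{2})(v*w)=(\fanoind D+\Lambda+\mu+\tfrac{\delta}{2})(v)*w+v*(\fanoind D+\Lambda+\mu+\tfrac{\delta}{2})(w).
\]
Since $\widetilde\hh=\hh+\delta_{\fanoind,1}\mathbf m!\,q\,\mathbf 1$ and $\deg q=\fanoind$, we have $(\fanoind D+\Lambda+\mu+\frac{\delta}{2})\widetilde\hh=\widetilde\hh$. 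Induction on $b$ then gives
\[
\fanoind D(\widetilde\hh^{*b})=\bigl(b-\tfrac{\delta}{2}-\mu\bigr)\widetilde\hh^{*b}-\Lambda(\widetilde\hh^{*b}),
\]
where the coefficients are taken in the flat basis.

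Next I handle $\mathcal V(\widetilde\hh^{*b})$ using the canonical basis. We have $\widetilde\hh^{*b}=\sum_\alpha x_\alpha^b e_\alpha=\sum_\alpha x_\alpha^b\Psi^{\bar\alpha}_0\bar e_\alpha$, using $\Delta_\alpha^{-1/2}=\Psi^{\bar\alpha}_0$. By \eqref{eqn:hom-bar-e}, $\mathcal V(\bar e_\alpha)=-\fanoind D(\bar e_\alpha)$. Since $\mathcal V$ is an endomorphism, the same identity read in the flat basis combines with $\mathcal V=\mu-\Psi^{-1}\Lambda(\Psi)$. The cleanest route is to compute $(\fanoind D+\Lambda)$ of the coefficients of $\widetilde\hh^{*b}$ in two ways: once in the flat basis, and once via the expansion $\sum_\alpha x_\alpha^b\Psi^{\bar\alpha}_0\bar e_\alpha$. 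I use $\fanoind D(\bar e_\alpha)=-\mathcal V\bar e_\alpha$ together with $\Lambda(\bar e_\alpha)=-(\mu-\mathcal V)\bar e_\alpha$, where the latter comes from $\Psi^{-1}\Lambda(\Psi)=\mu-\mathcal V$ and the flat basis being $\lambda,\rho$-independent. Combining these with the previous paragraph should give
\[
\fanoind D(\widetilde\hh^{*b})+\mathcal V(\widetilde\hh^{*b})=\bigl(b-\tfrac{\delta}{2}\bigr)\widetilde\hh^{*b}-\mu(\widetilde\hh^{*b})+\mathcal V(\widetilde\hh^{*b})-\Lambda(\widetilde\hh^{*b}),
\]
and then
\[
\fanoind D(\widetilde\hh^{*b})+\mathcal V(\widetilde\hh^{*b})=\bigl(b-\tfrac{\delta}{2}\bigr)\widetilde\hh^{*b}-\sum_\alpha\Lambda(x_\alpha^b\Psi^{\bar\alpha}_0)\bar e_\alpha+(\text{terms}).
\]
I will fix the precise bookkeeping so that exactly the correction $\sum_\alpha\Lambda(x_\alpha^b\Psi^{\bar\alpha}_0)\bar e_\alpha$ survives. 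Here $\fanoind D(x_\alpha^b\Psi^{\bar\alpha}_0)$ is expressed via $E_\Lambda$-homogeneity of $x_\alpha$ (degree $1$) and $\Psi$.

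Finally I sum over $a+b=k-1$ with $\widetilde\hh^{*a}*$ and take traces. Using $\sum_{a+b=k-1}(b-\frac{\delta}{2})=\frac{k(k-1-\delta)}{2}$, the coefficient $-\frac{1}{24\fanoind}\cdot\frac{k(k-1-\delta)}{2}$ produces the first term of the corollary. The $\Lambda$-correction produces the last term with coefficient $+\frac{1}{24\fanoind}$, and the remaining terms are unchanged.

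The main obstacle is the middle step. I must carefully track the matrices in the flat versus normalized canonical basis, and the sign and convention for $\Lambda$ acting on basis vectors versus coefficients, so that the $\mu$-terms cancel exactly and only $\Lambda(x_\alpha^b\Psi^{\bar\alpha}_0)\bar e_\alpha$ remains.
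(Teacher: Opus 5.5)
Your overall plan is the paper's. You rewrite only the first term of \eqref{eqn:equiv-vira-genus1}, aiming for
\[
\fanoind D(\widetilde\hh^{*b})+\mathcal V(\widetilde\hh^{*b})=\bigl(b-\tfrac{\delta}{2}\bigr)\widetilde\hh^{*b}-\sum_\alpha\Lambda(x_\alpha^b\Psi^{\bar\alpha}_0)\,\bar e_\alpha ,
\]
and your final summation, giving the coefficients $-\frac{k(k-1-\delta)}{48\fanoind}$ and $+\frac{1}{24\fanoind}$, is correct.

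The gap is that this identity, the only step with content, is never derived. You leave an unspecified ``(terms)'' and defer the bookkeeping, and the device you propose for it is vacuous. Computing $(\fanoind D+\Lambda)(\widetilde\hh^{*b})$ in the flat basis and via $\sum_\alpha x_\alpha^b\Psi^{\bar\alpha}_0\bar e_\alpha$ gives the same answer both times. Your two frame identities combine to $(\fanoind D+\Lambda)(\bar e_\alpha)=-\mathcal V\bar e_\alpha+(\mathcal V-\mu)\bar e_\alpha=-\mu\,\bar e_\alpha$. So the canonical-basis computation returns $(b-\frac{\delta}{2}-\mu)\widetilde\hh^{*b}$, exactly what the derivation identity from \eqref{eqn:hom-qp} gives. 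The operator $\mathcal V$ cancels out, so the comparison is $0=0$ and cannot produce the target. You have to \emph{separate} $\fanoind D$ from $\Lambda$ rather than apply them together.

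The missing step is short. Set $c_\alpha=x_\alpha^b\Psi^{\bar\alpha}_0$ and apply $\fanoind D$ alone to $\widetilde\hh^{*b}=\sum_\alpha c_\alpha\bar e_\alpha$. Since $\mathcal V$ is linear over functions, \eqref{eqn:hom-bar-e} gives $\sum_\alpha c_\alpha\,\fanoind D(\bar e_\alpha)=-\mathcal V(\widetilde\hh^{*b})$. Using $(\fanoind D+\Lambda)(c_\alpha)=(b-\frac{\delta}{2})c_\alpha$, this yields
\[
\fanoind D(\widetilde\hh^{*b})+\mathcal V(\widetilde\hh^{*b})=\sum_\alpha \fanoind D(c_\alpha)\,\bar e_\alpha=\bigl(b-\tfrac{\delta}{2}\bigr)\widetilde\hh^{*b}-\sum_\alpha\Lambda(c_\alpha)\,\bar e_\alpha .
\]
This is the paper's proof, and it makes your first step (the twisted analogue of \eqref{eqn:hom-small-quantum-powers}) unnecessary.

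If you prefer to keep that step, the other half of the split also closes the argument. Expand $\Lambda(\widetilde\hh^{*b})=\sum_\alpha\Lambda(c_\alpha)\bar e_\alpha+(\mathcal V-\mu)\widetilde\hh^{*b}$ using your identity $\Lambda(\bar e_\alpha)=(\mathcal V-\mu)\bar e_\alpha$. Substitute this into $\fanoind D(\widetilde\hh^{*b})=(b-\frac{\delta}{2}-\mu)\widetilde\hh^{*b}-\Lambda(\widetilde\hh^{*b})$. The $\mu$-terms cancel, and your ``(terms)'' is exactly zero. This variant uses \eqref{eqn:hom-qp} and the formula $\Psi^{-1}\Lambda(\Psi)=\mu-\mathcal V$ in place of \eqref{eqn:hom-bar-e} and the homogeneity of $x_\alpha$ and $\Psi^{\bar\alpha}_0$.

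So no new idea is missing; everything needed is already in your list. As written, though, the proposal stops short of the one identity the corollary rests on.
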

	
	\begin{proof}
		Recall that
		$\widetilde{\hh}^{*b}
		=\sum_\alpha x_\alpha^b\Psi^{\bar\alpha}_0\bar e_\alpha$.
		The homogeneity relations
		$(\fanoind D+\Lambda)(x_\alpha)=x_\alpha$ and
		$(\fanoind D+\Lambda)(\Psi^{\bar\alpha}_0)
		=-\frac{\delta}{2}\Psi^{\bar\alpha}_0$, together
		with~\eqref{eqn:hom-bar-e}, give
		\[
		\mathcal V(\widetilde{\hh}^{*b})
		=\bigl(b-\tfrac{\delta}{2}\bigr)\widetilde{\hh}^{*b}
		-\fanoind D(\widetilde{\hh}^{*b})
		-\sum_\alpha
		\Lambda(x_\alpha^b\Psi^{\bar\alpha}_0)\bar e_\alpha.
		\]
		Substituting this expression into the first sum on the right-hand
		side of~\eqref{eqn:equiv-vira-genus1}, and using
		$\widetilde{\hh}^{*a}*\widetilde{\hh}^{*b}
		=\widetilde{\hh}^{*(k-1)}$ and
		$\sum_{a+b=k-1}(b-\frac{\delta}{2})
		=\frac{k(k-1-\delta)}{2}$, proves the Corollary.
	\end{proof}

	\subsection{Non-equivariant limit}
	We now compute the non-equivariant limits appearing in the proof of
	Proposition~\ref{prop:lim-vira-tw}. In order to distinguish the untwisted
	quantum product $*$ from the equivariant twisted quantum product
	$*_{\tw}$, we denote the latter by $\twqp$ in this subsection.
	For convenience, we first set $\rho=0$ and then let $\lambda\to0$
	throughout the computation. For the positive-degree contributions, the
	non-equivariant limits exist and are independent of the order of
	specialization. For the degree-zero contributions, this order agrees
	with the convention introduced in \S \ref{sec:comparison_formula}.
	
	\begin{lemma}\label{lem:QP-ext}
		For $j=0,\ldots,r-1$, we have
		\beq\label{eqn:H*extH}
		\lim_{\substack{\lambda,\rho\to0}}\widetilde\hh\twqp{\hh}^{N-r+j}={\hh}^{N-r+j+1}.
		\eeq
	\end{lemma}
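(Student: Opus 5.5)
We plan to work with the explicit recursion for the twisted quantum product from the appendix: $\widetilde\hh\twqp\hh^{k}=\hh^{k+1}+\sum_{i=0}^{k}a_{k,i}\hh^i$, where the coefficients $a_{k,i}$ are read off from the $z^0$-coefficient of $D_{\hh}(S^{\tau,*}(z)\hh^k)$. The first step is a regularity and homogeneity statement. We will argue that each $a_{k,i}$ is a polynomial in $q,\lambda^N,\rho$. The reason is that $I^{\lambda,\rho}$ is polynomial in $\lambda^N,\rho$ at each order in $q$, and the recursion involves only subtractions and no divisions, since $\mathcal B$ is unipotent. Moreover $a_{k,i}$ is homogeneous of degree $k+1-i$ when $\deg q=\fanoind$. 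Hence the limit $\lim_{\lambda,\rho\to0}a_{k,i}$ exists. It equals the coefficient of the pure monomial $q^d$ with $d\fanoind=k+1-i$, and it is computed simply by setting $\lambda=\rho=0$ in the recursion, that is, by running the recursion with the function $I^{0,0}$ modulo $\hh^N=0$. For $k=N-r+j$ with $0\le j\le r-1$, the relevant degrees satisfy $d\le (N-r+j+1-i)/\fanoind$.

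The second step is a Lefschetz argument that kills most of the correction. Pair $\widetilde\hh\twqp\hh^k-\hh^{k+1}$ with an arbitrary $\hh^b$ using the twisted pairing. By polynomiality, the $\lambda,\rho\to0$ limit of this pairing is a limit of three-point invariants $\langle\hh,\hh^k,\hh^b\rangle^{\rm tw}_{0,3,d}$ with $d>0$. By genus-zero quantum Lefschetz, which holds because $H^1(C,u^*\mathbb E)=0$ in genus zero, this limit is $\langle\hh,\hh^k|_X,\hh^b|_X\rangle^X_{0,3,d}$. It vanishes because $\hh^k|_X=0$ for $k\ge N-r>\dim X$. The limiting pairing is $\eta_0(a,b)=\int_{\mathbb P^{N-1}}ab\prod_i m_i\hh$, whose kernel is $\operatorname{span}\{\hh^{N-r},\ldots,\hh^{N-1}\}$. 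Therefore the limit of $\sum_i a_{k,i}\hh^i$ has no components $\hh^i$ with $i<N-r$.

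The third step, which we expect to be the main obstacle, is to show that the limiting coefficients $a_{k,i}$ with $N-r\le i\le k$ also vanish. These are the components invisible to the non-equivariant pairing. Our plan is to handle them directly through the $I$-function at $\lambda=\rho=0$. In $I^{0,0}$ the degree-$d$ term is $z\,q^d\prod_j\prod_{l=1}^{m_jd}(m_j\hh+lz)/\prod_{l=1}^d(\hh+lz)^N$. Applying $D_{\hh}^k$ and tracking $\hh$-degrees modulo $\hh^N$, we want to show the following: the $z^0$-coefficients that feed $a_{k,i}$ for $i\ge N-r$ would require a total $\hh$-degree exceeding $N-1$ once the homogeneity $d\fanoind=k+1-i\le r$ is imposed, so they vanish. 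An alternative check, if this bookkeeping is messy, is to set only $\lambda=0$ first. The twisted pairing $\int\hh^{a+b}\prod_i(m_i\hh-\rho)$ is then nondegenerate for generic $\rho$, and one can use the relation~\eqref{eqn:qp-relation} at $\lambda=0$, $\widetilde\hh^{\twqp N}=q\prod_i(m_i\widetilde\hh-\rho)^{\twqp m_i}$, to control the top components by their $\rho$-degree before letting $\rho\to0$. Combining the three steps gives $\lim\widetilde\hh\twqp\hh^{N-r+j}=\hh^{N-r+j+1}$.
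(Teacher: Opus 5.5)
Your Steps 1 and 2 are sound. The division-free recursion makes each $a_{k,i}$ a polynomial in $q,\lambda^N,\rho$, so the limit may be computed at $\lambda=\rho=0$. Genus-zero quantum Lefschetz plus the kernel of the limiting pairing then kills the components $\hh^i$ with $i<N-r$. (The term $\tau\,\eta(\hh^k,\hh^b)$ you dropped also vanishes in the limit, since $k\ge N-r$.) When $\fanoind>r$, homogeneity alone then finishes, because $d\fanoind=k+1-i\le r$ has no solution with $d\ge1$.

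When $\fanoind\le r$ (e.g.\ every index-one case), however, the whole content of the lemma is your Step 3, which you leave as a plan, and the mechanism you propose cannot work. Homogeneity only says that a term $q^d\hh^iz^{-l}$ of $S^{\tau,*}(z)\hh^k$ satisfies $d\fanoind+i-l=k$, which never forces $i>N-1$. Indeed, for $\fanoind=1$ the lemma is equivalent to $\lim\hh\twqp\hh^k=\hh^{k+1}-\mathbf m!\,q\,\hh^k$ for $N-r\le k\le N-1$. So $\hh\twqp\hh^k$ has a nonzero top component of exactly the same degree, and it is cancelled only by $\tau\hh^k$.

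Concretely, take the quartic threefold ($N=5$, $r=1$, $\tau=24q$). The recursion at $\lambda=\rho=0$ gives $S^{\tau,*}(z)\hh^3=\hh^3+24q\hh^3z^{-1}-24q\hh^4z^{-2}+O(q^2)$. The identity $S^{\tau,*}(z)\hh^4=\hh^4+O(q^2)$ holds only after the terms $\pm24q\hh^3$ and $\pm24q\hh^4z^{-1}$ cancel, all of which have $\hh$-degree at most $N-1$. No degree count produces this.

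Your fallback via \eqref{eqn:qp-relation} does not help either. The relation $P(\widetilde\hh\twqp)=0$ only fixes the characteristic polynomial $x^N-q\prod_i m_i^{m_i}x^{\sum_i m_i}$ of $\widetilde\hh\twqp$ at $\lambda=\rho=0$. It determines neither the entries of $\widetilde\hh\twqp$ in the flat basis nor the invariance of $\mathrm{span}\{\hh^{N-r},\ldots,\hh^{N-1}\}$.

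The missing idea is the reduced Picard--Fuchs equation. At $\lambda=\rho=0$ the numerator of the degree-$d$ summand of $I^{0,0}$ contains the factors $m_i(\hh+dz)$ (from $l=m_id$). These cancel $r$ of the $N$ denominator factors $(\hh+dz)$. Hence the operator $L=D_\hh^{N-r}-q\bigl(\prod_i m_i\bigr)\prod_i\prod_{k=1}^{m_i-1}(m_iD_\hh+kz)$ telescopes on $I^{0,0}$ and gives exactly $L\,I^{0,0}=z\hh^{N-r}$.

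Using \eqref{eqn:QDE-S}, rewrite the left side as $zS^{\tau,*}(z)v$ with $v$ polynomial in $z$. Since $v=S^{\tau,*}(z)^{-1}\hh^{N-r}=\hh^{N-r}+O(z^{-1})$, you get $v=\hh^{N-r}$, i.e.\ $S^{\tau,*}(z)\hh^{N-r}=\hh^{N-r}$. Then $S^{\tau,*}(z)(\widetilde\hh\twqp\hh^k)=D_\hh(\hh^k)=\hh^{k+1}$ inductively yields $\widetilde\hh\twqp\hh^k=\hh^{k+1}$ and $S^{\tau,*}(z)\hh^{k+1}=\hh^{k+1}$ for all $k\ge N-r$. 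This handles every component at once and is the paper's proof. It makes your Step 2 unnecessary; your Step 1 is what justifies specializing to $\lambda=\rho=0$ before applying the QDE.
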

	\begin{proof}
		We first set $\lambda=\rho=0$ in $I$-function $I^{\lambda,\rho}$, then by direct computations,
		we have the following reduced Picard--Fuchs equation:
		$$
		\left[
		D_{\hh}^{N-r}
		-q\left(\prod_{i=1}^r m_i\right)
		\prod_{i=1}^r\prod_{k=1}^{m_i-1}(m_iD_{\hh}+kz)\right]
		I^{0,0}(q,z)=z\hh^{N-r}.
		$$
		By repeatedly applying the QDE~\eqref{eqn:QDE-S}, we can write the left-hand side of above equation as
		$zS^{\tau,*}(z)v(q,z)$ for some
		$v(q,z)\in H^*(\mathbb P^{N-1})[[q]][z]$.
		Hence,
		$$
		v(q,z)=S^{\tau,*}(z)^{-1}{\hh}^{N-r}
		={\hh}^{N-r}+O(z^{-1}).
		$$
		Since $v(q,z)$ contains only non-negative powers of $z$, it follows that
		$v(q,z)={\hh}^{N-r}$. Therefore,
		\beq\label{eqn:S-extH}
		S^{\tau,*}(z){\hh}^{N-r}\big|_{\lambda=\rho=0}
		={\hh}^{N-r}.
		\eeq
		Suppose $S^{\tau,*}(z){\hh}^{k}\big|_{\lambda=\rho=0}
		={\hh}^{k}$ for some $k\geq N-r$, then by using QDE,
		\begin{align*}
			S^{\tau,*}(z)(\widetilde\hh\twqp{\hh}^k)\big|_{\lambda=\rho=0}
			=D_{\hh}\big(S^{\tau,*}(z){\hh}^k\big)\big|_{\lambda=\rho=0}
			=D_{\hh}\left(\hh^k\right)
			={\hh}^{k+1}.
		\end{align*}
		Comparing the coefficients of $z^0$, we obtain
		$$
		\widetilde\hh\twqp{\hh}^k\big|_{\lambda=\rho=0}
		={\hh}^{k+1}.
		$$
		The preceding identity then also gives
		$S^{\tau,*}(z){\hh}^{k+1}|_{\lambda=\rho=0}
		={\hh}^{k+1}$.
		Starting from~\eqref{eqn:S-extH}, the induction proves the Lemma.
	\end{proof}
	\begin{proposition}\label{prop:twisted-trace-limits}
		For $1\leq k-1\leq\delta$ and $0\leq a\leq k-1$, set
		$b=k-1-a$. We have
		\begin{align}
			\lim_{\lambda\to0}\lim_{\rho\to0}
			\tr\bigl(\widetilde\hh^{\twqp(k-1)}\twqp\bigr)
			=&\, \tr_{\rm amb}\bigl(\widetilde\hh^{*(k-1)}*\bigr),
			\label{eqn:full-amb-tr-tw-1} \\
			\lim_{\lambda\to0}\lim_{\rho\to0}
			\tr\bigl(\widetilde\hh^{\twqp a}\twqp \mu\widetilde\hh^{\twqp b}\twqp\mu\bigr)
			=&\, \tr_{\rm amb}\bigl(\widetilde\hh^{*a}*\mu\widetilde\hh^{*b}*\mu\bigr).
			\label{eqn:full-amb-tr-tw-2}
		\end{align}
		Moreover,
		\begin{align}
			&\lim_{\lambda\to0}\lim_{\rho\to0}
			\tr\bigl(
			\widetilde\hh^{\twqp a}\twqp
			(\mathcal V-\mu)\widetilde\hh^{\twqp b}\twqp
			(\mathcal V-\mu)
			\bigr)=0,
			\label{eqn:full-amb-tr-tw-3}\\
			&\lim_{\lambda\to0}\lim_{\rho\to0}
			\sum_\alpha
			\tr\bigl(
			\widetilde\hh^{\twqp a}\twqp
			\Lambda(x_\alpha^b\Psi^{\bar\alpha}_0)
			\bar e_\alpha\twqp
			\bigr)=0,
			\label{eqn:full-amb-tr-tw-5}\\
			&\lim_{\lambda\to0}\lim_{\rho\to0}
			\tr\bigl(
			\widetilde\hh^{\twqp k}\twqp\Lambda(R_1^*)
			\bigr)=0.
			\label{eqn:full-amb-tr-tw-4}
		\end{align}
		Here $R_1^*$ is represented with respect to the flat basis.
	\end{proposition}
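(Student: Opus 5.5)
The plan is to reduce all five limits to two mechanisms: a block-triangular structure of the limiting twisted quantum product, which gives \eqref{eqn:full-amb-tr-tw-1} and \eqref{eqn:full-amb-tr-tw-2}, and residue sums over the roots $x_\alpha$ of $P(x,q)$, which give the three vanishing statements. Throughout I set $\rho=0$ first and then let $\lambda\to0$.

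\emph{Step 1: the ambient traces.} Use the flat basis $\{\hh^k\}_{k=0}^{N-1}$. The structure constants of $\widetilde\hh\twqp$ are the coefficients $a_{k,i}$, which are polynomial in $\rho$, $q$ and $\lambda^N$, so the limit of the matrix of $\widetilde\hh\twqp$ exists. Let $K=\sspan\{\hh^{N-r},\dots,\hh^{N-1}\}$; this is exactly $\ker(j^*)$ in degrees $\le N-1$.
\begin{itemize}
\item By Lemma~\ref{lem:QP-ext}, in the limit $\widetilde\hh\twqp$ maps $K$ into itself and strictly raises degree there, using $\hh^N=\lambda^N\to0$. Hence its restriction to $K$ is nilpotent.
\item On the quotient $H/K\cong H^{\rm amb}(X)$, genus-zero quantum Lefschetz (the non-equivariant limit of the twisted $I$-function is $\mathbb I^X$, and the mirror maps agree) identifies the induced operator with $\widetilde\hh*$ on $H^{\rm amb}(X)$. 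Equivalently, the recursion for the $a_{k,i}$ with $k<\delta$ coincides with the one defining the quantum product of $X$.
\item The operator $\mu$ is diagonal in the flat basis with the same eigenvalues $a-\delta/2$ on both sides, so it preserves $K$ and induces the ambient $\mu$ on the quotient.
\end{itemize}
Therefore each trace in \eqref{eqn:full-amb-tr-tw-1} and \eqref{eqn:full-amb-tr-tw-2} splits as a quotient part plus a part on $K$. The quotient part is the ambient trace. On $K$, the operator $\widetilde\hh^{\twqp a}\mu\widetilde\hh^{\twqp b}\mu$ strictly raises degree whenever $a+b=k-1\ge1$, so its trace vanishes. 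The same holds for $\widetilde\hh^{\twqp(k-1)}$.

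\emph{Step 2: rewriting the remaining terms as canonical sums.} By \eqref{eqn:Psi-check-Psi} we have $\Psi=\Delta^{-1/2}\check\Psi\mathcal B^{-1}$, and in the flat basis $\mathcal V-\mu=-\Psi^{-1}\Lambda(\Psi)$. Since $\mathcal B$ is independent of $\lambda$ and polynomial in $\rho$, the term $\Lambda(\mathcal B)=\rho\pd_\rho\mathcal B$ vanishes at $\rho=0$. The remaining pieces are of two kinds:
\begin{itemize}
\item quantum multiplication by the class $\sum_\alpha\Lambda(\log\Delta_\alpha^{-1/2})e_\alpha$;
\item a Vandermonde-derivative term built from $\Lambda(x_\alpha^j)=jx_\alpha^{j-1}\Lambda(x_\alpha)$.
\end{itemize}
Differentiating $P(x_\alpha,q)=0$ gives, at $\rho=0$,
\[
\Lambda(x_\alpha)=\frac{N\lambda^N}{P'(x_\alpha,q)}.
\]
Also $\Delta_\alpha^{-1}=\prod_i(m_ix_\alpha-\rho)/P'(x_\alpha,q)$ by \eqref{eqn:Psi0}. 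Every trace in \eqref{eqn:full-amb-tr-tw-3} and \eqref{eqn:full-amb-tr-tw-5} then becomes a finite sum $\sum_\alpha F(x_\alpha)$. Here $F$ is a rational function built from $x_\alpha^a$, $1/P'(x_\alpha)$, its $x$-derivatives, and an explicit overall factor $\lambda^N$ (or $\rho$, which is zero in the limit).

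\emph{Step 3: residue evaluation.} Each sum $\sum_\alpha G(x_\alpha)/P'(x_\alpha)^m$ can be written as a sum of residues of a rational differential of the form $\widetilde G(x)\,dx/P(x)^{m}$ over the roots of $P$. This equals minus the residue at $\infty$ and at the poles of $\widetilde G$, namely the points $x=\rho/m_i$, which collapse to $0$ when $\rho=0$. These residues are polynomial in $q$ and $\lambda^N$ and can be bounded by degree counting: $\deg x=\deg\lambda=1$, $\deg q=\fanoind$, and $a+b=k-1\le\delta$. This should show that the $\lambda^N$ prefactor survives, so the expression tends to $0$ as $\lambda\to0$. This would prove \eqref{eqn:full-amb-tr-tw-3} and \eqref{eqn:full-amb-tr-tw-5}.

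For \eqref{eqn:full-amb-tr-tw-4} I would proceed in three moves:
\begin{itemize}
\item Express $R_1^*$ in the flat basis through $R_1^*{\bf 1}$ (Lemma~\ref{lem:formula-R1}) together with \eqref{eqn:R1k}.
\item Use \eqref{eqn:Lambda-R1-flat} to pass to the canonical frame, so that $\tr(\widetilde\hh^{\twqp k}\twqp\Lambda(R_1^*))$ becomes $\sum_\alpha x_\alpha^k\,\Lambda\big((R_1^*)^\alpha_0\big)\Delta_\alpha$, plus commutator terms already controlled by Step~2.
\item Apply $\Lambda$ to the explicit formula \eqref{eqn:formula-R1}. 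This produces $\Lambda(x_\alpha)$ factors, each carrying $\lambda^N$, and $\rho$-derivatives, which vanish at $\rho=0$ because $K_0,K_1,K_2,W$ are rational in $m_ix-\rho$ with $\rho$-dependence only through that shift.
\end{itemize}
Then apply the same residue argument. I expect this last term to be the main obstacle. The formula for $(R_1^*)^\alpha_0$ involves up to $P^{(3)}/P'$ and $(P''/P')^2$, so the residue sums involve high-order poles at the roots. Controlling the degree in $x$ at $\infty$, so that no $\lambda^{-N}$ compensates the prefactor, requires careful bookkeeping. If this becomes unwieldy, I would use quasi-homogeneity \eqref{eqn:hom-R1} to trade $\Lambda(R_1^*)$ for $-R_1^*-\fanoind D(R_1^*)$ and reuse \eqref{eqn:trace-DR1}, reducing to traces of the type already handled in \eqref{eqn:full-amb-tr-tw-3}.
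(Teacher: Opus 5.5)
Your Step~1 is essentially the paper's argument for \eqref{eqn:full-amb-tr-tw-1} and \eqref{eqn:full-amb-tr-tw-2}, and it is fine. The gap is in the three vanishing statements.

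\textbf{The Step~3 mechanism for \eqref{eqn:full-amb-tr-tw-3} and \eqref{eqn:full-amb-tr-tw-5}.} You propose residues only at $\infty$ and $x=\rho/m_i$, polynomiality in $q,\lambda^N$, and a surviving prefactor $\lambda^N$. This is only asserted, and it fails exactly where the difficulty lies.
After setting $\rho=0$, $M=\sum_i m_i$ of the roots tend to $0$, with $x_\alpha^M\sim-\lambda^N/(\mathbf m^{\mathbf m}q)$, where $\mathbf m^{\mathbf m}=\prod_i m_i^{m_i}$. On these roots $P'(x_\alpha,q)\sim-M\mathbf m^{\mathbf m}q\,x_\alpha^{M-1}$, so $\Lambda(x_\alpha)=N\lambda^N/P'(x_\alpha,q)\approx\frac{N}{M}x_\alpha$. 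The factor $\lambda^N$ is therefore entirely absorbed; for example, $\sum_\alpha\Lambda(x_\alpha)/x_\alpha=\Lambda\log\prod_\alpha x_\alpha=N$.
In residue terms, at $\rho=0$ all poles of $K_0,K_1,K_2,W'/W$ sit at $x=0$, where $P(0)=-\lambda^N$. Residues there produce negative powers of $\lambda^N$. Powers $P'(x_\alpha,q)^{-m}$ with $m\ge2$ add poles at the critical points of $P$. So the sums are rational rather than polynomial, and degree counting cannot conclude. Indeed, a polynomial of weighted degree $k-1\le\delta<N$ could not contain $\lambda^N$ at all.
Moreover, \eqref{eqn:full-amb-tr-tw-3} is not a single sum. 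The off-diagonal canonical entries of $\mathcal V-\mu$ are $-\Lambda(x_\alpha)P'(x_\alpha,q)/\bigl((x_\alpha-x_\beta)P'(x_\beta,q)\bigr)$, which gives a double sum with $(x_\alpha-x_\beta)^{-2}$.

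The missing idea is the root asymptotics. The small roots differ to leading order by distinct $M$-th roots of unity, so $\Lambda(x_\alpha)/x_\alpha$ and $x_\alpha x_\beta/(x_\alpha-x_\beta)^2$ stay bounded. Each diagonal entry then tends to $0$ because of the factor $x_\alpha^a x_\beta^b$ with $a+b=k-1\ge1$. On the other $\fanoind$ roots, $x_\alpha^{\fanoind}\to\mathbf m^{\mathbf m}q$ and $\Lambda(x_\alpha)\to0$. So the vanishing comes from $k\ge2$, not from a surviving $\lambda^N$, and \eqref{eqn:full-amb-tr-tw-5} works the same way.

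\textbf{The argument for \eqref{eqn:full-amb-tr-tw-4}.} At $\rho=0$, formula \eqref{eqn:formula-R1} contains the nonregular piece $a_0/(24Mx_\alpha)$, and $\Lambda$ does not tame it: $\Lambda(x_\alpha^{-1})\approx-\frac{N}{M}x_\alpha^{-1}$ on the small roots. Its contribution must vanish by an exact identity that uses $k-1\ge1$.
The paper works in the quantum-power basis, where $\Lambda$ acts on coefficients because $\mathcal B$ is $\lambda$-independent. There $\sum_\alpha x_\alpha^{-1}e_\alpha=\bigl(\widetilde\hh^{\twqp(N-1)}-\mathbf m^{\mathbf m}q\,\widetilde\hh^{\twqp(M-1)}\bigr)/\lambda^N$. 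Applying $\Lambda$ and multiplying by $\widetilde\hh^{\twqp k}$ gives $-N\widetilde\hh^{\twqp(k-1)}$, which has no $\mathbf 1$-component since $k-1\ge1$. The other columns follow via \eqref{eqn:R1k}.

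Your reduction to $\sum_\alpha x_\alpha^k\Lambda\bigl((R_1^*)^\alpha_0\bigr)\Delta_\alpha$ plus Step~2 terms is also inaccurate, for two reasons:
\begin{itemize}
\item The trace sees the diagonal canonical entries of $R_1^*$. These differ from the first-column entries $(R_1^*)^\alpha_0$ by off-diagonal sums with $(x_\alpha-x_\beta)^{-1}$.
\item The correction in \eqref{eqn:Lambda-R1-flat} involves $\mathcal V$, not only $\mathcal V-\mu$. It therefore has mixed $\mu$-terms that Step~2 does not treat.
\end{itemize}

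Finally, your fallback via \eqref{eqn:hom-R1} is circular. It undoes the step in \eqref{eqn:genus-one-canonical} that produced $\Lambda(R_1^*)$ and returns you to $\tr(\widetilde\hh^{\twqp k}\twqp R_1^*)$. That is essentially $\langle\widetilde\hh^{\twqp k}\rangle^{\rm tw}_{1,1}$, whose limit is what this proposition is meant to supply. In addition, \eqref{eqn:hom-R1} is stated in the normalized canonical basis, not the flat one.
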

	
	\begin{proof}
		By the convention fixed at the beginning of this subsection, we first set $\rho=0$.
		In particular, we have $\Lambda=\lambda\pd_{\lambda}$.
		
		Consider the flat basis $\{\hh^j\}_{j=0}^{N-1}$. 
		After taking the limit $\lambda\to0$, the twisted quantum product on $\sspan\{1,\hh,\ldots,\hh^\delta\}$ specializes to the untwisted quantum product on the ambient cohomology. 
		For $j=\delta+1,\ldots,N-1$, Lemma~\ref{lem:QP-ext} shows that successive
		multiplication by $\widetilde\hh$ produces no $\hh^j$-component.
		This proves~\eqref{eqn:full-amb-tr-tw-1}. Since $\mu$ is diagonal
		with respect to the flat basis, the same argument proves
		\eqref{eqn:full-amb-tr-tw-2}.
		
		We prove~\eqref{eqn:full-amb-tr-tw-3} by a direct computation in the canonical basis. 
		Recall that with respect to the normalized canonical basis, $\mathcal V-\mu$ has the matrix form $-\Lambda(\Psi)\Psi^{-1}$.
		By using equation~\eqref{eqn:Psi-check-Psi} and the independence of $\mathcal B$ with $\lambda$,
		we see that with respect to the canonical basis $\{e_\alpha\}$, $\mathcal V-\mu$ has the matrix form
		$$
		-\operatorname{diag}\left(\Lambda(\Psi^{\bar\alpha}_0)/\Psi^{\bar\alpha}_0\right)
		-\Lambda(\check\Psi)\check\Psi^{-1}.
		$$
		Recall that $\check\Psi^\alpha_j=x_\alpha^j$. 
		Let $M=\sum_i m_i$ and ${\bf m}^{\bf m}=\prod_i m_i^{m_i}$. 
		Then equations~\eqref{eqn:PF-0} and~\eqref{eqn:Psi0} give
		$$
		\Lambda(x_\alpha)
		=
		\frac{Nx_\alpha(x_\alpha^{\fanoind}-{\bf m}^{\bf m}q)}
		{Nx_\alpha^{\fanoind}-M{\bf m}^{\bf m}q},
		\qquad
		\frac{\Lambda(\Psi^{\bar\alpha}_0)}
		{\Psi^{\bar\alpha}_0}
		=
		\frac{\Lambda(x_\alpha)}{2}
		\left(
		\frac{r}{x_\alpha}
		-\frac{P''(x_\alpha,q)}{P'(x_\alpha,q)}
		\right).
		$$
		Furthermore, by the Lagrange interpolation formula,
		$$
		\bigl(\Lambda(\check\Psi)\check\Psi^{-1}\bigr)^\alpha_\beta
		=
		\begin{cases}
			\displaystyle
			\frac{\Lambda(x_\alpha)P'(x_\alpha,q)}
			{(x_\alpha-x_\beta)P'(x_\beta,q)},
			&\alpha\ne\beta,\\[8pt]
			\displaystyle
			\frac{\Lambda(x_\alpha)P''(x_\alpha,q)}
			{2P'(x_\alpha,q)},
			&\alpha=\beta.
		\end{cases}
		$$
		It follows that the entries of $\mathcal V-\mu$ in the canonical basis
		are
		\[
		(\mathcal V-\mu)^\alpha_\beta
		=
		\begin{cases}
			\displaystyle
			-\frac{\Lambda(x_\alpha)P'(x_\alpha,q)}
			{(x_\alpha-x_\beta)P'(x_\beta,q)},
			&\alpha\ne\beta,\\[8pt]
			\displaystyle
			-\frac{r}{2}\frac{\Lambda(x_\alpha)}{x_\alpha},
			&\alpha=\beta.
		\end{cases}
		\]
		In particular, for $\alpha\ne\beta$,
		\[
		(\mathcal V-\mu)^\alpha_\beta
		(\mathcal V-\mu)^\beta_\alpha
		=
		-\frac{\Lambda(x_\alpha)\Lambda(x_\beta)}
		{(x_\alpha-x_\beta)^2}.
		\]
		Since quantum multiplication by $\widetilde\hh$ is diagonal in the
		canonical basis, the $\alpha$-th diagonal entry of the endomorphism in
		\eqref{eqn:full-amb-tr-tw-3} is
		\[
		\frac{r^2}{4}x_\alpha^{k-1}
		\left(\frac{\Lambda(x_\alpha)}{x_\alpha}\right)^2
		-
		\sum_{\beta\ne\alpha}
		x_\alpha^a x_\beta^b
		\frac{\Lambda(x_\alpha)\Lambda(x_\beta)}
		{(x_\alpha-x_\beta)^2}.
		\]
		For the $M$ roots converging to zero, the defining equation gives
		$x_\alpha^M\sim-\lambda^N/({\bf m}^{\bf m}q)$, and their leading
		terms differ by distinct $M$-th roots of unity. Hence
		$\Lambda(x_\alpha)/x_\alpha$ and
		$x_\alpha x_\beta/(x_\alpha-x_\beta)^2$ are bounded for distinct roots
		converging to zero. Since $a+b=k-1\geq1$, all the corresponding terms
		in the preceding diagonal entry tend to zero.
		For the remaining roots,
		$x_\alpha^{\fanoind}\to{\bf m}^{\bf m}q$, and the formula for $\Lambda(x_\alpha)$ gives $\Lambda(x_\alpha)\to0$.  
		The same conclusion is immediate when
		one of $x_\alpha,x_\beta$ converges to zero and the other does not.
		Thus every diagonal entry tends to zero, and
		\eqref{eqn:full-amb-tr-tw-3} follows.
		Equation~\eqref{eqn:full-amb-tr-tw-5} follows similarly.
		
		It remains to prove~\eqref{eqn:full-amb-tr-tw-4}. 
		As in the preceding computations, it is enough to consider the possible nonregular terms of $R_1^*$ with respect to the quantum-power basis $\{\widetilde\hh^{\twqp k}\}_{k=0}^{N-1}$.
		Since $\mathcal B$ is independent of $\lambda$, $\Lambda$ may be applied directly to the matrix coefficients with respect to this basis. 
		Recall that $(R_1^*)^\alpha_0$ is the $e_\alpha$-component of $R_1^*{\bf1}$. Formula~\eqref{eqn:formula-R1} shows that its only possible nonregular part is a $\lambda$-independent scalar multiple of $x_\alpha^{-1}$. 
		Indeed, after setting $\rho=0$, a direct computation reduces formula~\eqref{eqn:formula-R1} to
		$$
		(R_1^*)^\alpha_0=\frac{a_0+a_1\cX+a_2\cX^2+a_3\cX^3}{24Mx_\alpha},
		$$
		where $\cX=\frac{Nx_\alpha^{\fanoind}}{Nx_\alpha^{\fanoind}-M{\bf m}^{\bf m}q}$ 
		and $a_i$, $i=0,1,2,3$, are constants that independent on $\lambda$. 
		Except for the term $\frac{a_0}{24Mx_\alpha}$, the same argument as above shows that Lagrange interpolation yields regular quantum-power coefficients.
		The nonregular term in $R_1^*{\bf1}$ has behavior
		$$
		\sum_\alpha\frac{1}{x_\alpha}e_\alpha
		=\frac{\widetilde\hh^{\twqp(N-1)}-{\bf m}^{\bf m}q\,\widetilde\hh^{\twqp(M-1)}}{\lambda^N}
		$$
		up to a constant factor $\frac{a_0}{24M}$.
		Applying $\Lambda$ to these coefficients, writing the result back in the canonical basis, and multiplying by $\widetilde\hh^{\twqp k}$ gives $-N\sum_\alpha x_\alpha^{k-1}e_\alpha$.
		After transforming back by $\check\Psi^{-1}$, its
		$\widetilde\hh^{\twqp i}$-component is
		$-N\sum_\alpha (\check\Psi^{-1})^i_\alpha x_\alpha^{k-1}=-N\delta_{i,k-1}$.
		Since $k-1\geq1$, its $\widetilde\hh^{\twqp0}$-component is zero.
		Thus it makes no contribution to the diagonal entry of the column indexed by ${\bf1}$.
		The remaining part of this column has regular quantum-power coefficients and tends to zero after applying $\Lambda$.
		For $R_1^*{\widetilde\hh^{\twqp j}}$, $j=1,\cdots,N-1$, equation~\eqref{eqn:R1k} and the same computation as above show that, 
		after applying $\Lambda$ and multiplying by $\widetilde\hh^{\twqp k}$ with $k\geq 2$, the contributions of the possible nonregular terms to the corresponding diagonal entries tend to zero.
		This proves~\eqref{eqn:full-amb-tr-tw-4}.
	\end{proof}
	
\end{appendices}

\bibliographystyle{alpha}
\bibliography{biblio}

@article{ciocan2014wall,
  title={Wall-crossing in genus zero quasimap theory and mirror maps},
  author={Ciocan-Fontanine, Ionut and Kim, Bumsig},
  journal={Algebraic Geometry},
  volume={1},
  number={4},
  pages={400--448},
  year={2014},
  publisher={European Mathematical Society Publishing House}
}

@article{ciocan2017higher,
  title={Higher genus quasimap wall-crossing for semipositive targets},
  author={Ciocan-Fontanine, Ionu{\c{t}} and Kim, Bumsig},
  journal={Journal of the European Mathematical Society},
  volume={19},
  number={7},
  pages={2051--2102},
  year={2017}
}

@article{hassett2003moduli,
  title={Moduli spaces of weighted pointed stable curves},
  author={Hassett, Brendan},
  journal={Advances in Mathematics},
  volume={173},
  number={2},
  pages={316--352},
  year={2003},
  publisher={Elsevier}
}

@article{guo2016genus,
  title={Genus-One Mirror Symmetry in the {Landau--Ginzburg} Model},
  author={Guo, Shuai and Ross, Dustin},
  journal={Algebraic Geometry},
  year={2019},
  volume={6},
  number={3},
  pages={260-301}
}

@article{Clader2024HigherGenus,
  title= {Higher-genus quasimap wall-crossing via localization},
  author  = {Clader, Emily and Janda, Felix and Ruan, Yongbin},
  journal = {Algebraic Geometry},
  volume  = {11},
  number  = {5},
  pages   = {712--736},
  year    = {2024},
  doi     = {10.14231/AG-2024-021},
  url     = {https://doi.org/10.14231/AG-2024-021}
}

@article{clader2017higherLG,
author = {Emily Clader and Felix Janda and Yongbin Ruan},
title = {{Higher-genus wall-crossing in the gauged linear sigma model}},
volume = {170},
journal = {Duke Mathematical Journal},
number = {4},
publisher = {Duke University Press},
pages = {697 -- 773},
year = {2021},
doi = {10.1215/00127094-2020-0053},
URL = {https://doi.org/10.1215/00127094-2020-0053}
}

@article{ciocan2020quasimap,
  title={Quasimap wall-crossings and mirror symmetry},
  author={Ciocan-Fontanine, Ionu{\c{t}} and Kim, Bumsig},
  journal={Publications math{\'e}matiques de l'IH{\'E}S},
  pages={1--60},
  year={2020},
  publisher={Springer}
}

@article{ciocan2014stable,
  title={Stable quasimaps to {GIT} quotients},
  author={Ciocan-Fontanine, Ionu{\c{t}} and Kim, Bumsig and Maulik, Davesh},
  journal={Journal of Geometry and Physics},
  volume={75},
  pages={17--47},
  year={2014},
  publisher={Elsevier}
}

@article{zhou2020higher-with-pg-no,
  title={Higher-genus wall-crossing in {Landau--Ginzburg} theory},
  author={Zhou, Yang},
  journal={Advances in Mathematics},
  volume={361},
  pages={Article 106914, 1-42},
  year={2020},
  publisher={Elsevier}
}

@article{zinger2009reduced,
  title={The reduced genus 1 {Gromov--Witten} invariants of {Calabi--Yau} hypersurfaces},
  author={Zinger, Aleksey},
  journal={Journal of the American Mathematical Society},
  volume={22},
  number={3},
  pages={691--737},
  year={2009}
}

@article{popa2013genus,
  title={The genus one {Gromov--Witten} invariants of {Calabi--Yau} complete intersections},
  author={Popa, Alexandra},
  journal={Transactions of the American Mathematical Society},
  volume={365},
  number={3},
  pages={1149--1181},
  year={2013}
}

@article{cheong2015orbifold,
  title={Orbifold quasimap theory},
  author={Cheong, Daewoong and Ciocan-Fontanine, Ionu{\c{t}} and Kim, Bumsig},
  journal={Mathematische Annalen},
  volume={363},
  number={3-4},
  pages={777--816},
  year={2015},
  publisher={Springer}
}

@article{kim2018mirror,
  title={Mirror theorem for elliptic quasimap invariants},
  author={Kim, Bumsig and Lho, Hyenho},
  journal={Geometry \& Topology},
  volume={22},
  number={3},
  pages={1459--1481},
  year={2018},
  publisher={Mathematical Sciences Publishers}
}

@inproceedings{ciocan-fontanine2016,
address = "Tokyo, Japan",
author = "Ciocan-Fontanine, Ionu{\c{t}} and Kim, Bumsig",
booktitle = "Development of Moduli Theory — Kyoto 2013",
doi = "10.2969/aspm/06910323",
pages = "323--347",
publisher = "Mathematical Society of Japan",
title = "{Big $I$-functions}",
url = "https://doi.org/10.2969/aspm/06910323",
year = "2016"
}

@article{lian1997mirror,
  title={Mirror principle {I}},
  author={Lian, Bong and Kefeng Liu and Shing-Tung Yau},
  journal={Asian J. Math.},
  volume={1},
  number={4},
  pages={729--763},
  year={1997}
}

@article{Wang_2025,
title={A mirror theorem for Gromov-Witten theory without convexity},
volume={13},
DOI={10.1017/fms.2025.34},
journal={Forum of Mathematics, Sigma},
author={Wang, Jun},
year={2025},
pages={e72}}

@article{chang2018polynomial,
	author  = {Chang, Huai-Liang and Guo, Shuai and Li, Jun},
	title   = {Polynomial structure of {Gromov--Witten} potential of quintic {$3$}-folds},
	journal = {Annals of Mathematics},
	volume  = {194},
	number  = {3},
	pages   = {585--645},
	year    = {2021},
	doi     = {10.4007/annals.2021.194.3.1}
}

@article{lee2001quantum,
  title={Quantum {Lefschetz} hyperplane theorem},
  author={Lee, Y-P},
  journal={Inventiones mathematicae},
  volume={145},
  number={1},
  pages={121--149},
  year={2001},
  publisher={Springer}
}

@article{coates2007quantum,
  title={Quantum {Riemann--Roch}, {Lefschetz} and {Serre}},
  author={Coates, Tom and Givental, Alexander},
  journal={Annals of Mathematics},
  pages={15--53},
  year={2007},
  publisher={JSTOR}
}

@article{zhou2022quasimap,
  title={Quasimap wall-crossing for {GIT} quotients},
  author={Zhou, Yang},
  journal={Inventiones mathematicae},
  volume={227},
  number={2},
  pages={581--660},
  year={2022},
  publisher={Springer}
}

@phdthesis{pinharry2020weighted,
  title={Weighted quasimap wall-crossing via localization},
  author={Pinharry, Michelle},
  year={2020},
  school={University of Minnesota}
}

@article{hu2015big,
  title={Big quantum cohomology of {Fano} complete intersections},
  author={Hu, Xiaowen},
  journal={arXiv preprint arXiv:1501.03683},
  year={2015}
}

@article{Wit91,
author = {Witten, Edward},
title = {Two-dimensional gravity and intersection theory on the moduli space},
journal = {Surveys in Differential Geometry},
volume = {1},
pages = {243--310},
year = {1991},
doi = {10.4310/SDG.1990.v1.n1.a5}
}

@article{Kont92,
	author = {Kontsevich, Maxim},
	title = {Intersection theory on the moduli space of curves and the matrix {Airy} function},
	journal = {Communications in Mathematical Physics},
	volume = {147},
	number = {1},
	pages = {1--23},
	year = {1992},
	doi = {10.1007/BF02099526}
}

@article{DZ98,
	author = {Dubrovin, Boris and Zhang, Youjin},
	title = {Bihamiltonian hierarchies in {2D} topological field theory at one-loop approximation},
	journal = {Communications in Mathematical Physics},
	volume = {198},
	number = {2},
	pages = {311--361},
	year = {1998},
	doi = {10.1007/s002200050480}
}

@article{DZ99,
	author = {Dubrovin, Boris and Zhang, Youjin},
	title = {Frobenius manifolds and {Virasoro} constraints},
	journal = {Selecta Mathematica},
	volume = {5},
	number = {4},
	pages = {423--466},
	year = {1999},
	doi = {10.1007/s000290050053}
}

@incollection{Get99,
	author = {Getzler, Ezra},
	title = {The {Virasoro} conjecture for {Gromov--Witten} invariants},
	booktitle = {Algebraic geometry: Hirzebruch 70},
	series = {Contemporary Mathematics},
	volume = {241},
	pages = {147--176},
	publisher = {American Mathematical Society},
	address = {Providence, RI},
	year = {1999}
}

@incollection{Get04,
author = {Getzler, Ezra},
title = {The jet-space of a {Frobenius} manifold and higher-genus {Gromov--Witten} invariants},
booktitle = {Frobenius Manifolds: Quantum Cohomology and Singularities},
pages = {45--89},
year = {2004},
publisher = {Springer}
}

@article{LT98,
	author = {Liu, Xiaobo and Tian, Gang},
	title = {{Virasoro} constraints for quantum cohomology},
	journal = {Journal of Differential Geometry},
	volume = {50},
	number = {3},
	pages = {537--590},
	year = {1998}
}

@article{Liu01,
	author = {Liu, Xiaobo},
	title = {Elliptic {Gromov--Witten} invariants and {Virasoro} conjecture},
	journal = {Communications in Mathematical Physics},
	volume = {216},
	number = {3},
	pages = {705--728},
	year = {2001},
	doi = {10.1007/s002200000360}
}

@article{Liu02,
	author = {Liu, Xiaobo},
	title = {Quantum product on the big phase space and the {Virasoro} conjecture},
	journal = {Advances in Mathematics},
	volume = {169},
	number = {2},
	pages = {313--375},
	year = {2002},
	doi = {10.1006/aima.2001.2046}
}

@article{Tel12,
	author = {Teleman, Constantin},
	title = {The structure of {2D} semi-simple field theories},
	journal = {Inventiones Mathematicae},
	volume = {188},
	number = {3},
	pages = {525--588},
	year = {2012},
	doi = {10.1007/s00222-011-0352-5}
}

@article{Giv01a,
	author = {Givental, Alexander},
	title = {{Gromov--Witten} invariants and quantization of quadratic {Hamiltonians}},
	journal = {Moscow Mathematical Journal},
	volume = {1},
	number = {4},
	pages = {551--568},
	year = {2001}
}

@article{Giv01b,
	author = {Givental, Alexander},
	title = {Semisimple {Frobenius} structures at higher genus},
	journal = {International Mathematics Research Notices},
	volume = {2001},
	number = {23},
	pages = {1265--1286},
	year = {2001},
	doi = {10.1155/S1073792801000605}
}

@incollection{Giv04,
	author = {Givental, Alexander},
	title = {Symplectic geometry of {Frobenius} structures},
	booktitle = {Frobenius Manifolds: Quantum Cohomology and Singularities},
	pages = {91--112},
	year = {2004},
	publisher = {Springer}
}

@article{Giv96,
	author = {Givental, Alexander},
	title = {Equivariant {Gromov--Witten} invariants},
	journal = {International Mathematics Research Notices},
	volume = {1996},
	number = {13},
	pages = {613--663},
	year = {1996},
	doi = {10.1155/S1073792896000414}
}

@article{OP06,
	author = {Okounkov, Andrei and Pandharipande, Rahul},
	title = {{Virasoro} constraints for target curves},
	journal = {Inventiones Mathematicae},
	volume = {163},
	number = {1},
	pages = {47--108},
	year = {2006},
	doi = {10.1007/s00222-005-0455-y}
}

@article{vakil-zinger,
author = {Ravi Vakil and Aleksey Zinger},
title = {{A desingularization of the main component of the moduli space of genus-one stable maps into $\mathbb P^n$}},
volume = {12},
journal = {Geometry \& Topology},
number = {1},
publisher = {MSP},
pages = {1 -- 95},
year = {2008},
doi = {10.2140/gt.2008.12.1},
URL = {https://doi.org/10.2140/gt.2008.12.1}
}

@article{li2009genus,
  title={On the genus-one {Gromov-Witten} invariants of complete intersections},
  author={Li, Jun and Zinger, Aleksey},
  journal={Journal of Differential Geometry},
  volume={82},
  number={3},
  pages={641--690},
  year={2009},
  publisher={Lehigh University}
}

@article{Hu22,
	author = {Hu, Xiaowen},
	title = {On genus 1 {Gromov--Witten} invariants of {Fano} complete intersections},
	journal = {arXiv preprint arXiv:2203.08091},
	year = {2022}
}

@article{GZ25,
	author  = {Guo, Shuai and Zhang, Qingsheng},
	title   = {Cohomological field theory with vacuum and its {Virasoro} constraints},
	journal = {Mathematische Annalen},
	volume  = {396},
	number  = {1},
	year    = {2026},
	doi     = {10.1007/s00208-026-03552-z},
	url     = {https://doi.org/10.1007/s00208-026-03552-z}
}

@article{Get97,
	author = {Getzler, Ezra},
	title = {Intersection theory on \({\Mbar}_{1,4}\) and elliptic {Gromov--Witten} invariants},
	journal = {Journal of the American Mathematical Society},
	volume = {10},
	number = {4},
	pages = {973--998},
	year = {1997},
	doi = {10.1090/S0894-0347-97-00246-4}
}

@article{CMP10,
	author  = {Chaput, Pierre-Emmanuel and Manivel, Laurent and Perrin, Nicolas},
	title   = {Quantum cohomology of minuscule homogeneous spaces {III}:
	semi-simplicity and consequences},
	journal = {Canadian Journal of Mathematics},
	volume  = {62},
	number  = {6},
	pages   = {1246--1263},
	year    = {2010},
	doi     = {10.4153/CJM-2010-050-9}
}

@incollection{BM04,
	author    = {Bayer, Arend and Manin, Yuri I.},
	title     = {{(Semi)simple exercises in quantum cohomology}},
	booktitle = {The Fano Conference},
	pages     = {143--173},
	publisher = {Universit{\`a} di Torino},
	address   = {Turin},
	year      = {2004},
	eprint    = {math/0103164},
	archivePrefix = {arXiv},
	primaryClass  = {math.AG}
}

@article{Hu21,
	author  = {Hu, Xiaowen},
	title   = {Big quantum cohomology of even dimensional intersections
	of two quadrics},
	journal = {arXiv preprint arXiv:2109.11469},
	year    = {2021}
}

@article{EHX97,
	author  = {Eguchi, Tohru and Hori, Kentaro and Xiong, Chuan-Sheng},
	title   = {Quantum cohomology and {Virasoro} algebra},
	journal = {Physics Letters B},
	volume  = {402},
	number  = {1--2},
	pages   = {71--80},
	year    = {1997},
	doi     = {10.1016/S0370-2693(97)00401-2},
	eprint  = {hep-th/9703086}
}

@article{EJX98,
	author  = {Eguchi, Tohru and Jinzenji, Masao and Xiong, Chuan-Sheng},
	title   = {Quantum cohomology and free-field representation},
	journal = {Nuclear Physics B},
	volume  = {510},
	number  = {3},
	pages   = {608--622},
	year    = {1998},
	doi     = {10.1016/S0550-3213(97)00730-X},
	eprint  = {hep-th/9709152}
}

@article{KM98,
	author  = {Kontsevich, Maxim and Manin, Yuri I.},
	title   = {Relations between the correlators of the topological
	sigma-model coupled to gravity},
	journal = {Communications in Mathematical Physics},
	volume  = {196},
	number  = {2},
	pages   = {385--398},
	year    = {1998},
	doi     = {10.1007/s002200050426},
	eprint  = {alg-geom/9708024}
}

@incollection{Giv98,
	author    = {Givental, Alexander B.},
	title     = {Elliptic {Gromov--Witten} invariants and the generalized
	mirror conjecture},
	booktitle = {Integrable Systems and Algebraic Geometry
	(Kobe/Kyoto, 1997)},
	editor    = {Saito, M.-H. and Shimizu, Y. and Ueno, K.},
	pages     = {107--155},
	publisher = {World Scientific},
	address   = {River Edge, NJ},
	year      = {1998},
	eprint    = {math/9803053}
}

@article{ABPZ23,
	author  = {Arg{\"u}z, H{\"u}lya and Bousseau, Pierrick and
	Pandharipande, Rahul and Zvonkine, Dimitri},
	title   = {{Gromov--Witten} theory of complete intersections via nodal invariants},
	journal = {Journal of Topology},
	volume  = {16},
	number  = {1},
	pages   = {264--343},
	year    = {2023},
	doi     = {10.1112/topo.12284}
}
\end{document}